\newtheorem{main}{Theorem}
\newtheorem*{OzSolid}{Ozawa's Solidity Theorem}
\newtheorem*{PetSolid}{Peterson's Solidity Theorem}
\newtheorem{Corollaryb1}{Corollary B.\!\!}
\newtheorem*{CorollaryC}{Corollary C}
\newtheorem{theorem}{Theorem}[section]
\newtheorem{lem}[theorem]{Lemma}
\newtheorem{prop}[theorem]{Proposition}
\theoremstyle{definition}
\newtheorem{definition}[theorem]{Definition}
\theoremstyle{remark}
\newtheorem{remark}[theorem]{Remark}
\newtheorem{question}[theorem]{Question}
\def\ca{\curvearrowright}
\def\ra{\rightarrow}
\def\e{\epsilon}
\def\G{\Gamma}
\def\g{\gamma}
\def\la{\lambda}
\def\La{\Lambda}
\def\a{\alpha}
\def\s{\sigma}
\def\bb{\mathbb}
\def\fr{\mathfrak}
\def\oo{\overline{\otimes}}
\def\up{\upsilon}
\def\vp{\varphi}
\def\de{\delta}
\def\del{\partial}
\def\Sg{\Sigma}
\def\Cal{\mathcal}
\DeclareMathOperator*{\id}{{id}}
\DeclareMathOperator*{\Lim}{{Lim}}
\numberwithin{equation}{section}
\newcommand{\ip}[2]{\langle #1, #2 \rangle}
\newcommand{\abs}[1]{\lvert#1\rvert}
\providecommand{\nor}[1]{\lVert #1 \rVert}
\begin{document}

\title[${\rm II}_1$ factors of negatively curved groups]{On the structural theory of ${\rm II}_1$ factors of negatively curved groups\\ Sur la structure des facteurs de type $\rm II_1$ associ\'e avec les groupes de courbure n\'egative}

\author{Ionut Chifan}
\address{Department of Mathematics, 1326 Stevenson Center,  Vanderbilt University, Nashville, TN 37240, USA and IMAR, Bucharest, Romania} 
\curraddr{Department of Mathematics, University of Iowa, 14 MacLean Hall, Iowa City, IA 52242-1419, USA}
\email{ionut-chifan@uiowa.edu}
\thanks{The first author was supported in part by NSF Grant \#101286.}

\author{Thomas Sinclair}
\address{Department of Mathematics, 1326 Stevenson Center, Vanderbilt University, Nashville, TN 37240, USA}
\curraddr{Department of Mathematics, University of California, Los Angeles, Box 951555, Los Angeles, CA, 90095-1555, USA}
\email{thomas.sinclair@math.ucla.edu}

\subjclass[2010]{46L10; 20F67}

\date{\today}

\dedicatory{}

\keywords{strong solidity, negatively curved groups, bi-exact groups}

\begin{abstract} Ozawa showed in \cite{OzSolid} that for any i.c.c.\ hyperbolic group, the associated group factor $L\G$ is solid.  Developing a new approach that combines some
methods of Peterson \cite{PetL2},  Ozawa and Popa \cite{OPCartanI,OPCartanII}, and Ozawa \cite{OzCBAP}, we strengthen this result by showing that
$L\G$ is strongly solid. Using our methods in cooperation with a cocycle superrigidity result of Ioana \cite{IoaCSR}, we show that profinite actions of lattices in ${\rm Sp}(n,1)$, $n\geq 2$, are virtually $W^*$-superrigid. 
\vskip 0.05in
\noindent \emph{Keywords:} strong solidity, negatively curved groups, bi-exact groups\\

\noindent \textsc{R\'esum\'e}. Ozawa \`a montr\'e dans \cite{OzSolid} soit une c.c.i.\ groupe hyperbolique, le facteur de type $\rm II_1$ associ\'e est solide. En developpant une nouvelle approche qui en combine les m\'ethodes de Peterson \cite{PetL2}, Ozawa et Popa \cite{OPCartanI, OPCartanII}, et Ozawa \cite{OzCBAP}, nous renforcent ce r\'esultat en montrant cela facteur est fortement solide. Suivre nous m\'ethodes en cooperation avec une r\'esultat d'Ioana de superrigidit\'e des cocycles \cite{IoaCSR}, nous prouvent que les actions des r\'eseaux de ${\rm Sp}(n,1)$, $n\geq 2$,  sont virtuellement $\rm W^*$-superrigide.
\vskip 0.05in
\noindent \emph{Mots-clefs:} fortement solidit\'e, groupes de courbure n\'egative, groupes \guillemotleft bi-exact\guillemotright

\end{abstract}

\maketitle

\section*{Introduction}

In a conceptual leap Ozawa established a broad property for group factors of Gromov hyperbolic groups---what he termed
solidity---which essentially allowed him to reflect the ``small cancellation'' property such a group enjoys in terms of its associated von Neumann algebra.

\begin{OzSolid}[\cite{OzSolid}] If $\G$ is an i.c.c.\ Gromov hyperbolic group, then $L\G$ is \textit{solid},
i.e., $A' \cap L\G$ is amenable for every diffuse von Neumann subalgebra $A \subset L\G$.
\end{OzSolid}

\noindent Notable for its generality, Ozawa's argument relies on a surprising interplay between ${\rm C}^\ast$-algebraic and von Neumann algebraic techniques \cite{BrOz}.

Using his deformation/rigidity theory \cite{PoICM}, Popa was able to offer an alternate, elementary proof of solidity for free group factors: more generally, for factors admitting a ``free malleable deformation'' \cite{PoFree}. Popa's approach exemplifies the use of spectral gap rigidity arguments that opened up many new directions in deformation/rigidity theory, cf.\ \cite{PoICM,PoFree,PoSG}. Of particular importance, these techniques brought the necessary perspective for a remarkable new approach to the Cartan problem for free group factors in the work of Ozawa and Popa \cite{OPCartanI,OPCartanII}---an approach which this work directly builds upon.

A new von Neumann-algebraic approach to solidity was developed by Peterson in his important paper on $L^2$-rigidity \cite{PetL2}.
Essentially, Peterson was able to exploit the ``negative curvature'' of the free group on two generators $\bb F_2$, in terms of a proper
1-cocycle into the left-regular representation, to rule out the existence of large relative commutants of diffuse subalgebras of $L\bb F_2$.

\begin{PetSolid}[\cite{PetL2}] If $\G$ is an i.c.c.\ countable discrete which admits a proper 1-cocycle $b:
\G\to \Cal H_\pi$ for some unitary representation $\pi$ which is weakly-$\ell^2$ (i.e., weakly contained in the left-regular representation),
then $L\G$ is solid.
\end{PetSolid}

\noindent It was later realized by the second author \cite{Sin} that many of the explicit
unbounded derivations (i.e., the ones constructed from 1-cocycles) that Peterson works with have natural dilations which are malleable
deformations of their corresponding (group) von Neumann algebras.

However, the non-vanishing of 1-cohomology of $\G$ with coefficients in the left-regular representation does not reflect the full spectrum of
negative curvature phenomena in geometric group theory as evidenced by the existence of non-elementary hyperbolic groups with Kazhdan's
property (T), cf. \cite{BV}. In their fundamental works on the rigidity of group actions \cite{MScocycle, MSoe}, Monod and Shalom proposed a
more inclusive cohomological definition of negative curvature in group theory which is given in terms of non-vanishing of the second-degree
bounded cohomology for $\G$ with coefficients in the left-regular representation. Relying on Monod's work in bounded cohomology \cite{Monod},
we will make use of a related condition, which is the existence of a proper \textit{quasi}-1-cocycle on $\G$ into the left-regular
representation (more generally, into a representation weakly contained in the left-regular representation), cf.\ \cite{Monod, Tho}. By a result
of Mineyev, Monod, and Shalom \cite{MMS}, this condition is satisfied for any hyperbolic group---the case of vanishing first $\ell^2$-Betti
number is due to Mineyev \cite{Min}.

\subsection*{Statement of results} We now state the main results of the paper, in order to place them within the context of previous results in the structural theory of group von Neumann algebras. We begin with the motivating result of the paper, which unifies the solidity theorems of Ozawa and Peterson.

\begin{main}\label{solidity} Let $\G$ be an i.c.c.\ countable discrete group which is exact and admits a proper quasi-1-cocycle $q: \G\to \Cal H_\pi$ for some weakly-$\ell^2$ unitary representation $\pi$ (more generally, $\G$ is exact and belongs to the class $\Cal{QH}_{\rm reg}$ of Definition \ref{defn:QH}). Then $L\G$ is solid.
\end{main}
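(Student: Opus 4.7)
\smallskip
\noindent\textbf{Proof proposal.} The plan is to marry three ingredients: (i) Peterson's idea of extracting a deformation of $L\Gamma$ from a cocycle into the left-regular representation; (ii) the Ozawa--Popa spectral gap/transversality machinery from \cite{OPCartanI,OPCartanII}; and (iii) Ozawa's boundary amenability argument \cite{OzSolid,OzCBAP} for promoting weak containment in $\ell^2$ plus exactness to amenability of a relative commutant. Since the quasi-cocycle $q$ has a bounded coboundary defect rather than being a genuine $1$-cocycle, a direct application of the Schoenberg correspondence to $\psi(g)=\nor{q(g)}^2$ is unavailable, and this is precisely where the work lies.

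First, I would build a deformation of $L\Gamma$ from $q$. Letting $\omega_\pi$ be the Gaussian probability measure-preserving action of $\Gamma$ associated to $\pi$, set $\widetilde M=L^{\infty}(X_\pi,\mu_\pi)\,\overline\otimes\, L\Gamma$ with the diagonal trace. Using the standard exponentiation of Gaussian variables, the quasi-cocycle $q$ produces a family of unitaries $V_t\in\widetilde M$ satisfying $V_t\,u_g\,V_t^{*}=\xi_t(g)\,u_g$, where $\xi_t(g)\in L^{\infty}(X_\pi)$ is the Gaussian exponential of $\sqrt{t}\,q(g)$. Because $q$ is only a quasi-cocycle, the map $g\mapsto \xi_t(g)\,u_g$ is not a homomorphism, but its multiplicative defect is a uniformly bounded measurable cocycle, controlled by the Gaussian of the (bounded) coboundary of $q$. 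Conjugation by $V_t$ therefore yields a one-parameter family of trace-preserving u.c.p.\ maps $\phi_t\colon L\Gamma\to \widetilde M$ which converges pointwise in $\nor{\cdot}_2$ to the inclusion as $t\to 0$, together with a Popa-type transversality estimate $\nor{\phi_{2t}(x)-x}_2\gtrsim \nor{\phi_t(x)-E_{L\Gamma}\phi_t(x)}_2$ up to a bounded correction coming from the defect. This replaces Peterson's derivation-semigroup in the quasi-cocycle setting.

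Next, let $A\subset L\Gamma$ be diffuse and put $P:=A'\cap L\Gamma$. Assuming for contradiction that $P$ is not amenable, the spectral gap argument of Popa applied to the $A$-$A$ bimodule $L^{2}(\widetilde M)\ominus L^{2}(L\Gamma)$ forces $\phi_t\to\mathrm{id}$ uniformly on the unit ball of $P$. Combining this uniform convergence with the transversality estimate and properness of $q$, a Popa intertwining argument shows that a corner of $A$ embeds into a subalgebra arising from a finite-radius ball in the Cayley graph, contradicting diffuseness of $A$ -- unless the uniform convergence instead localizes on $P$ itself. In that scenario one passes to $P$ and applies the second ingredient.

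Finally, uniform convergence of $\phi_t$ on $P$ realizes the identity representation of $P$ as a weak limit of completely positive maps factoring through the Gaussian dilation, hence through a bimodule weakly contained in $L^{2}(L\Gamma)\otimes \mathcal H_\pi\otimes L^{2}(L\Gamma)$, which by $\pi\prec_{\mathrm w}\lambda_\Gamma$ is weakly contained in the coarse $L\Gamma$-bimodule. At this point I would invoke Ozawa's boundary/C${}^{*}$-algebraic technology for exact groups: exactness of $\Gamma$ together with weak containment in the coarse bimodule lets one push through Ozawa's argument (as in \cite{OzSolid,BrOz}) to conclude that $P$ is amenable, a contradiction. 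The main obstacle, in my view, is the second paragraph: controlling the defect of the quasi-cocycle so that conjugation by $V_t$ really produces u.c.p.\ maps with the required transversality, and verifying that the resulting bimodule is weakly contained in the coarse one -- a genuine $1$-cocycle would make both points essentially automatic, whereas the bounded-cohomology nature of $q$ demands careful bookkeeping.
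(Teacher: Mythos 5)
Your broad ingredient list (Gaussian dilation, transversality, exactness) is in the right spirit, but the first paragraph contains a gap that is not ``bookkeeping'' but the central obstruction that the paper is designed to circumvent. For a genuine $1$-cocycle $b$, conjugation by $V_t$ sends $u_g$ to $\upsilon_t(g)u_g$, so $\alpha_t(L\G)\subset L^\infty(Y^\pi)\rtimes \G$, and equivalently the Schur multiplier with kernel $e^{-t^2\|b(\gamma)-b(\delta)\|^2}$ is a Herz--Schur (Fourier) multiplier preserving $L\G$. For a \emph{quasi}-cocycle $q$, the function $\upsilon_t(\g h)\,\rho_\g(\upsilon_{-t}(h))$ genuinely depends on $h$, so $\alpha_t(u_\g)$ lies in the extended Roe algebra $L^\infty(Z\times\G)\rtimes_r\G$ and \emph{not} in any group-measure space von Neumann algebra; correspondingly, $\|q(\g)-q(\de)\|^2$ does not factor through $\g^{-1}\de$, so $\fr m_t$ is a Schur multiplier on $\fr B(\ell^2\G)$ that does \emph{not} restrict to a u.c.p.\ map on $L\G$. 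Hence the ``trace-preserving u.c.p.\ maps $\phi_t\colon L\G\to\widetilde M$'' of your first paragraph do not exist, and there is no ambient tracial von Neumann algebra in which to measure $\|\alpha_t(v)-v\|_2$ for $v\in\Cal U(P)$. This is precisely why the paper states that, unlike Popa and Vaes, it ``cannot directly appeal to spectral gap arguments'' --- your second paragraph would require exactly that.

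The paper's workaround, which your sketch does not reach, is to give up on having a deformation of $L\G$ and instead (a) keep track of the operator-norm convergence $\|(\alpha_t(x)-x)\cdot e\|_\infty\to 0$ for $x$ in the \emph{reduced crossed product C$^\ast$-algebra} $L^\infty(X)\rtimes_{\s,r}\G$ (Lemma \ref{deform-ineq}, a Roe-algebra estimate), (b) use properness of $q$ together with Popa intertwining to manufacture vectors $\xi_t$ with $\|\xi_t\|\geq c/2$ via transversality (Lemma \ref{transversality}), and (c) invoke \emph{local reflexivity} of $L^\infty(X)\rtimes_{\s,r}\G$ --- the consequence of exactness that is actually used --- to approximate a finite set of normalizing unitaries in $N=A'\cap M$ by c.c.p.\ maps landing back in the C$^\ast$-algebra, where the operator-norm estimate from (a) applies. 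The proof then closes by verifying Haagerup's amenability criterion (Proposition \ref{108}) directly, in the style of Peterson, rather than by spectral gap. In short: the role of exactness here is local reflexivity, not boundary amenability per se; the role of ``weakly $\ell^2$'' is to make the Gaussian bimodule weakly coarse so Haagerup's criterion bites; and the role of the uniform/extended Roe algebra is to house a deformation that cannot live on $L\G$. Recognizing that $\alpha_t$ refuses to preserve the von Neumann algebra, and reorganizing the argument around operator-norm control on a weakly dense C$^\ast$-subalgebra, is the key missing idea in your proposal.
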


\noindent In particular, all Gromov hyperbolic groups are exact, cf. \cite{Roe}, and admit a proper quasi-1-cocycle for the left-regular
representation \cite{MMS}. For the class of exact groups, belonging to the class $\Cal{QH}_{\rm reg}$ is equivalent to bi-exactness (see section \ref{sec:cohom}), so the above result is equivalent to Ozawa's Solidity Theorem.

Following Ozawa's and Peterson's work on solidity, there was some hope that similar techniques could be used to approach to the Cartan subalgebra problem for group factors of hyperbolic groups,
generalizing Voiculescu's celebrated theorem on the absence of Cartan subalgebras for free group factors \cite{Voic}.  However, the Cartan
problem for general hyperbolic groups would remain intractable until the breakthrough approach of Ozawa and Popa through Popa's deformation/rigidity
theory resolved it in the positive for the group factor of any discrete group of isometries of the hyperbolic plane \cite{OPCartanII}. In fact,
they were able to show that any such ${\rm II}_1$ factor $M$ is \textit{strongly solid}, i.e., for every diffuse, amenable von Neumann
subalgebra $A\subset M$, $ \Cal N_M(A)''\subset M$ is an amenable von Neumann algebra, where $\Cal N_M(A) = \{u\in \Cal U(M) : uAu^* = A\}$.

Using the techniques developed by Ozawa and Popa \cite{OPCartanI, OPCartanII} and a recent result of Ozawa \cite{OzCBAP}, we obtain
the following strengthening of Theorem \ref{solidity}.

\begin{main}\label{strongsolidity} Let $\G$ be an i.c.c.\ countable discrete group which is weakly amenable (therefore, exact). If $\G$ admits a proper quasi-1-cocycle into a weakly-$\ell^2$ representation, then $L\G$ is strongly solid.
\end{main}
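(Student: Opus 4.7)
Set $M := L\Gamma$ and let $A \subseteq M$ be a diffuse amenable von Neumann subalgebra; write $\Cal N := \Cal N_M(A)''$. The goal is to conclude that $\Cal N$ is amenable. I would follow the deformation/rigidity framework of Ozawa and Popa \cite{OPCartanI,OPCartanII}, as subsequently streamlined by Ozawa \cite{OzCBAP}, adapted to the quasi-cocycle setting provided by $\Gamma \in \Cal{QH}_{\rm reg}$. The scheme has three moving parts: a deformation of $M$ built from the quasi-$1$-cocycle, weak compactness of the normalizer action coming from weak amenability, and a bi-exactness-style dichotomy that forces amenability of $\Cal N$.

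\textbf{Deformation from the quasi-cocycle.} From the proper quasi-$1$-cocycle $q\colon \Gamma \to \Cal H_\pi$ with $\pi$ weakly-$\ell^2$, I would construct a one-parameter family of unital, trace-preserving, completely positive maps $\phi_t \colon M \to M$ of Schoenberg type, morally $\phi_t(u_g) = e^{-t\|q(g)\|^2}\,u_g$. When $q$ is a genuine $1$-cocycle, Sinclair's dilation \cite{Sin} realizes this family inside an s-malleable deformation of $M$; for a quasi-cocycle the defect $\delta(g,h) := q(gh) - \pi(g)q(h) - q(g)$ is uniformly bounded, so $\|q(\cdot)\|^2$ is only conditionally negative up to a bounded Herz--Schur perturbation. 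After composing with a controlled correction one still obtains genuine tracial c.p.\ maps on $M$, or, equivalently, a Stinespring-type dilation into a larger tracial algebra $\tilde M \supset M$ whose transversality and smoothness estimates survive with effective constants. Properness of $q$ is what makes $\phi_t \to \id$ strongly as $t \to 0$ and makes the deformation ``compact at infinity'' relative to the small-at-infinity subalgebra dictated by the $\Cal{QH}_{\rm reg}$ structure.

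\textbf{Weak compactness, spectral gap, and dichotomy.} Because $\Gamma$ is weakly amenable, Ozawa's theorem \cite{OzCBAP}---which removes the Cowling--Haagerup constant $1$ hypothesis from the corresponding result in \cite{OPCartanII}---implies that the conjugation action of $\Cal N_M(A)$ on $A$ is weakly compact in the sense of Ozawa-Popa. Feeding this into Popa's spectral gap argument against $\phi_t$ produces, for every $\e>0$, a $t>0$ with $\|\phi_t(u) - u\|_2 < \e$ uniformly for $u \in \Cal U(\Cal N)$; by transversality, the associated off-diagonal deformation vectors are uniformly small. The final step is the Ozawa-type dichotomy: using that $\Gamma$ is bi-exact (exactness combined with $\Gamma \in \Cal{QH}_{\rm reg}$), this almost-invariance of $\Cal N$ forces either a corner of $A$ to intertwine into $L\langle g \rangle$ for some $g \in \Gamma$---which is ruled out because $\Gamma$ is i.c.c.\ and $A$ is diffuse, a variant of the argument used to deduce Theorem \ref{solidity}---or else $\Cal N$ itself is amenable. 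Only the second alternative survives.

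\textbf{Main obstacle.} The delicate point is turning the quasi-cocycle into a deformation supporting honest Popa-style transversality. A genuine $1$-cocycle yields, via the Gaussian construction, an s-malleable deformation for which transversality is automatic; the bounded defect of a quasi-cocycle must be absorbed in a way that neither destroys the uniform estimates feeding the spectral gap argument nor the correct interaction with the small-at-infinity subalgebra on which the bi-exactness analysis operates. Controlling these two errors simultaneously, so that the Ozawa-Popa-Ozawa machinery applies with essentially the same outcomes as in the genuine-cocycle case, is where I expect most of the technical work to lie.
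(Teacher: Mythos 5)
Your high-level ingredients are the right ones (weak compactness from \cite{OzCBAP}, a deformation built from the quasi-cocycle, properness as the source of ``compactness,'' and an Ozawa--Popa style endgame), and you correctly identify where the difficulty lies, but the resolution you propose for that difficulty does not work, and it is not the one the paper uses. The obstruction to ``absorbing the defect'' into genuine trace-preserving c.p.\ maps $\phi_t: M\to M$ with $\phi_t(u_g)\approx e^{-t\|q(g)\|^2}u_g$ is not merely technical: for the flagship new examples, lattices $\G<{\rm Sp}(n,1)$, the group has property (T), so every conditionally negative definite function on $\G$ is bounded and no semigroup of unital trace-preserving Herz--Schur multipliers converging pointwise to the identity with proper decay can exist (it would give the Haagerup property). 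What \emph{is} conditionally negative definite is the two-variable kernel $(\g,\de)\mapsto\|q(\g)-q(\de)\|^2$, and Schoenberg then produces \emph{Schur} multipliers $\fr m_t$ of $\fr B(\ell^2(\G))$ which preserve the uniform Roe algebra but do \emph{not} map $L\G$ into itself. The paper's whole architecture is built around accepting this: the deformation $\a_t={\rm Ad}(V_t)$ lives on an extended Roe algebra, converges only in \emph{operator norm} and only on the weakly dense C$^*$-subalgebra $C^*_\la(\G)$ (Lemma \ref{deform-ineq}), and the passage from the C$^*$-algebra to the von Neumann algebra is made via local reflexivity of $C^*_\la(\G)$, which is exactly where exactness (from weak amenability) enters. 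Nothing in your proposal plays the role of this C$^*$-to-$W^*$ transfer.

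The endgame you describe is also inverted relative to what actually happens. The paper explicitly cannot run Popa's spectral gap argument to get $\|\phi_t(u)-u\|_2<\e$ uniformly on $\Cal U(\Cal N)$; instead, assuming $A\not\preceq_M\bb C$, properness of $q$ together with Popa's intertwining criterion and the ``compactness'' of $\fr m_t$ (Proposition \ref{improuvconv2}, Lemma \ref{improuvconv}) forces the transverse components $\xi_{n,t}$ of the deformed weak-compactness vectors to be bounded \emph{below} (Lemma \ref{OP4.11}), not ``uniformly small.'' Those uniformly large vectors, living in a bimodule weakly contained in the coarse bimodule, are then shown to be almost $\Cal N$-central, and amenability of $\Cal N$ is extracted from Haagerup's criterion (Proposition \ref{108}); the only intertwining alternative that appears is $A\preceq_M\bb C$ (ruled out by diffuseness), not intertwining into cyclic subgroups. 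So the proposal as written has a genuine gap at its central step --- the deformation it relies on cannot exist on $M$ in the property (T) case --- and the correct repair requires the C$^*$-algebraic detour through the (extended) Roe algebra, operator-norm convergence, local reflexivity, and Haagerup's criterion, as carried out in Theorem \ref{controlweakembedding}.
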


\noindent Appealing to Ozawa's proof of the weak amenability of hyperbolic groups \cite{OzHyp}, Theorem \ref{strongsolidity} allows us to fully
resolve in the positive the strong solidity problem---hence the Cartan problem---for i.c.c.\ hyperbolic groups and for lattices in connected
rank one simple Lie groups. In particular, if $\G$ is an i.c.c.\ lattice in ${\rm Sp}(n,1)$ or the exceptional group ${\rm F}_{4(-20)}$, then $L\G$ is strongly solid. The strong solidity problem for the other rank one simple Lie groups---those locally isomorphic to ${\rm SO}(n,1)$ or ${\rm SU}(n,1)$)---was
resolved for ${\rm SO}(2,1)$, ${\rm SO}(3,1)$, and ${\rm SU}(1,1)$ by the work of Ozawa and Popa \cite{OPCartanII} and, in the
general case, by the work of the second author \cite{Sin}. The results follow directly from Theorem \ref{strongsolidity} in the co-compact
(i.e., uniform) case: in the non-uniform case, we must appeal to a result of Shalom (Theorem 3.7 in \cite{Shalom}) on the integrability of
lattices in connected simple rank one Lie groups to produce a proper quasi-$1$-cocycle.

Building on Ioana's work on cocycle superrigidity \cite{IoaCSR}, we are also able to obtain new examples of
virtually ${\rm W}^\ast$-superrigid actions.

\begin{Corollaryb1}\label{Sp} Let $\G$ be an i.c.c.\ countable discrete group which is weakly amenable and which admits a proper quasi-$1$-cocycle into a weakly-$\ell^2$ representation. If $\G\ca (X,\mu)$ is a profinite, free, ergodic measure-preserving action of $\G$ on a standard probability space $(X,\mu)$, then $L^\infty(X,\mu)\rtimes \G$ has a unique Cartan subalgebra up to unitary conjugacy. If in addition $\G$ has Kazhdan's property (T) (e.g., $\G$ is a lattice in ${\rm Sp}(n,1)$, $n\geq 2$), then any such action $\G\ca (X,\mu)$ is virtually ${\rm W}^\ast$-superrigid.
\end{Corollaryb1}

A natural question to ask is whether our techniques can be extended to demonstrate strong
solidity of the group factor of any i.c.c.\ countable discrete group which is relatively hyperbolic \cite{Osin} to a family of amenable
subgroups.

The techniques used to prove Theorem \ref{strongsolidity} also allow us to deduce, by way of results of Cowling and Zimmer \cite{CoZi} and Ozawa \cite{OzHyp}, the following improvement of results of Adams (Corollary 6.2 in \cite{AdHyp}) and of Monod and Shalom (Corollary 1.19 in \cite{MSoe}) on the structure of groups which are orbit equivalent to hyperbolic groups.

\begin{Corollaryb1} \label{generalizadams} Let $\G$ be an i.c.c.\ countable discrete group which is weakly amenable and which admits a proper quasi-$1$-cocycle into a weakly-$\ell^2$ representation. Let $\G\ca (X,\mu)$ be a free, ergodic, measure-preserving action of $\G$ on a probability space and $\La\ca (Y,\nu)$ be an arbitrary free, ergodic, measure-preserving action of some countable discrete group $\La$ on a probability space. If $\G\ca (X,\mu)$ is orbit equivalent to $\La\ca (Y,\nu)$, then $\La$ is not isomorphic to a non-trivial direct product of infinite groups and the normalizer of any infinite, amenable subgroup $\Sigma < \La$ is amenable.
\end{Corollaryb1}

Beyond solidity results, we highlight that the techniques developed in this paper also enable us to reprove strong decomposition results for products of groups in the spirit of Popa's deformation/rigidity theory. Specifically, we are able to recover the following prime decomposition theorem of Ozawa and Popa.

\begin{main}[Ozawa and Popa \cite{OPPrime}]\label{prime} Let $\G = \G_1 \times \dotsb \times \G_n$ be a non-trivial product of exact, i.c.c.\ countable discrete groups such that $\G_i\in \Cal{QH}_{\rm reg}$, $1 \leq i \leq n$. If $N = N_1 \oo \dotsb \oo N_m$ is a product of $\rm II_1$ factors $N_j$, $1 \leq j \leq m$, for some $m\geq n$, and $L\G \cong N$, then $m = n$ and there exist $t_1,\dotsc,t_n >0$ with $t_1\dotsb t_n = 1$ so that, up to a permutation, $(L\G_i)^{t_i} \cong N_i$, $1 \leq i \leq n$.
\end{main}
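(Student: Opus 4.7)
The plan is to reduce the theorem to a relative version of Theorem \ref{solidity}—namely, bi-exactness of the product $\G$ relative to the family of complement subgroups $\{\widehat{\G}_i := \prod_{k\neq i}\G_k\}_{i=1}^n$—and then apply Popa's intertwining-by-bimodules to pair the tensor factors $N_j$ with the direct factors $L\G_i$ up to amplification. Setting $\widehat{L\G}_i := L\widehat{\G}_i = L\G_1 \oo \dotsb \oo \widehat{L\G_i} \oo \dotsb \oo L\G_n$, the central technical statement to establish is: \emph{if $A \subset L\G$ is a diffuse von Neumann subalgebra such that $A \not\prec_{L\G} \widehat{L\G}_i$ for every $i$, then $A' \cap L\G$ is amenable}. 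I would produce this by adapting the proof of Theorem \ref{solidity} to the relative setting. The direct sum $q := q_1 \oplus \dotsb \oplus q_n$ of the individual quasi-$1$-cocycles on the $\G_i$'s is a quasi-$1$-cocycle on $\G$ into a weakly-$\ell^2$ representation whose $i$-th component is proper on cosets of $\widehat{\G}_i$. The associated derivation on $L\G$, together with a Popa-style spectral gap analysis mirroring the proof of Theorem \ref{solidity}, should then deliver the desired dichotomy relative to each $\widehat{L\G}_i$ in parallel.

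Granting the relative solidity, I would apply it to each tensor factor $N_j$. The alternative that $N_j' \cap L\G = \oo_{k \neq j} N_k$ is amenable is ruled out when $m \geq 2$ using primeness of each $L\G_i$ (a consequence of Theorem \ref{solidity}) against the non-amenable product decomposition $L\G = L\G_1 \oo \dotsb \oo L\G_n$. This yields, for each $j$, an index $i(j) \in \{1,\ldots,n\}$ with $N_j \prec_{L\G} \widehat{L\G}_{i(j)}$. Passing to commutants via a standard intertwining lemma gives $L\G_{i(j)} \prec_{L\G} \oo_{k \neq j} N_k$; a pigeonhole-type argument then shows that $j \mapsto i(j)$ is injective, since otherwise some $L\G_i$ would intertwine into an overly small tensor product, contradicting its non-amenability. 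Combined with $m \geq n$ this yields $m = n$ and a bijection $j \mapsto i(j)$. The standard Popa-style machinery then upgrades the intertwinings to unitary conjugations and produces amplifications $t_1, \dotsc, t_n > 0$ with $\prod_i t_i = 1$ such that, up to a permutation, $(L\G_i)^{t_i} \cong N_i$.

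The main obstacle is the relative solidity step. The proof of Theorem \ref{solidity} exploits the global properness of a quasi-$1$-cocycle to extract a deformation of $L\G$ whose rigidity on a subalgebra forces amenability of its relative commutant; in the relative version one has only coset-properness modulo each $\widehat{\G}_i$, and the spectral gap analysis must be engineered to identify the correct index $i$ for which intertwining into $\widehat{L\G}_i$ occurs when rigidity fails. This is essentially the passage from bi-exactness to bi-exactness \emph{relative to a family of subgroups} in the sense of \cite{BrOz}, and will constitute the bulk of the technical effort. Once it is in place, the remaining steps follow the now-standard Ozawa--Popa prime-factorization template from \cite{OPPrime}.
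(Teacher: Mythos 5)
Your plan founders at the very first step: the direct sum $\tilde q_1 \oplus \dotsb \oplus \tilde q_n$ of the pulled-back quasi-cocycles is indeed a quasi-cocycle on $\G$, but the associated representation $\tilde\pi_1 \oplus \dotsb \oplus \tilde\pi_n$ is \emph{not} weakly-$\ell^2$ for $\G$. Each $\tilde\pi_i$ factors through the quotient $\G \twoheadrightarrow \G_i$, so it is trivial on $\hat\G_i = \prod_{k\neq i}\G_k$, which is an infinite non-amenable group (each $\G_k$ is non-amenable since it admits a proper array into a non-amenable representation). A representation that is trivial on an infinite non-amenable subgroup cannot be weakly contained in $\lambda_\G$, and this obstruction passes to the direct sum. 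Consequently the hypothesis of Theorem \ref{solidity} (or its relative form, Theorem \ref{solidity1}) fails for your proposed $q$, and the deformation/rigidity machinery gives you nothing. This is not a technicality: it is precisely the reason $L(\bb F_2 \times \bb F_2)$ fails to be solid despite $\bb F_2 \times \bb F_2$ carrying the direct sum of proper cocycles from its factors.

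The paper's fix — and the raison d'\^etre of the array formalism — is to replace the direct sum by a \emph{tensor} product. Proposition \ref{productarrays} produces, from the pulled-back arrays $\tilde q_i$ (each proper relative to $\hat\G_i$), a wedge product $q = \tilde q_1 \wedge \dotsb \wedge \tilde q_n$ valued in $\tilde\pi_1 \otimes \dotsb \otimes \tilde\pi_n$, which \emph{is} weakly-$\ell^2$ for $\G$ since it is weakly contained in $\lambda_{\G_1} \otimes \dotsb \otimes \lambda_{\G_n} \cong \lambda_\G$. The price is that $q$ is no longer a quasi-cocycle, only an array, and is proper only relative to the family $\{\hat\G_1,\dotsc,\hat\G_n\}$ — but both of these features are exactly what Theorem \ref{solidity1} is designed to absorb. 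You also seem to have overlooked that the relative-solidity dichotomy you announce as "the bulk of the technical effort" is not a new result to be proved: it is Theorem \ref{solidity1}, already established in the paper. Given the right array, the proof of Theorem \ref{prime} collapses to a single application of Theorem \ref{solidity1} (with $X$ a point and $\mathcal F = \{\hat\G_1,\dotsc,\hat\G_n\}$) followed by the combinatorial intertwining argument of Proposition 12 in \cite{OPPrime}.
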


An advantage to our approach is that our proof naturally generalizes to unique measure-equivalence decomposition of products of bi-exact groups, first proven by Sako (Theorem 4 in \cite{Sako}). This type of result was first achieved for products of groups of the class $\Cal C_{\rm reg}$ by Monod and Shalom (Theorem 1.16 in \cite{MSoe}).

\begin{CorollaryC}[Sako \cite{Sako}]\label{meprime} Let $\G = \G_1 \times \dotsb \times \G_n$ be a non-trivial product of exact, i.c.c.\ countable discrete groups such that $\G_i\in\Cal {QH}_{\rm reg}$, $1 \leq i \leq n$, and let $\La = \La_1\times \dotsb \times \La_m$ be a product of arbitrary countably infinite discrete groups. Assume that $\G \sim_{\rm ME} \La$, i.e., there exist $\G\ca (X,\mu)$ and $\La\ca (Y,\nu)$ free, ergodic, probability measure-preserving actions which are weakly orbit equivalent (Definition 2.2 in \cite{FurOE}). If $m\geq n$ then $m = n$ and, up to permuting indices, we have that $\G_i \sim_{\rm ME} \La_i$, $1 \leq i \leq n$.
\end{CorollaryC}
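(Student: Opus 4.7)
The plan is to translate the measure-equivalence hypothesis into a von Neumann-algebraic statement and then run a Cartan-relative analogue of the prime decomposition argument behind Theorem \ref{prime}. By Furman's correspondence between measure equivalence and stable/weak orbit equivalence, $\G\sim_{\rm ME}\La$ produces free ergodic p.m.p.\ actions $\G\ca(X,\mu)$ and $\La\ca(Y,\nu)$ whose orbit equivalence relations are stably isomorphic, and consequently a $\rm II_\infty$ amplification $M$ equipped with a common Cartan subalgebra $A$ satisfying $M=A\rtimes\G=A\rtimes\La$. Thus the two product groups give two Cartan presentations of the same inclusion $A\subset M$.

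Inside $M$ we have commuting factors $L\G_1,\dotsc,L\G_n$ and $L\La_1,\dotsc,L\La_m$; write $\hat\G_i:=\prod_{j\neq i}\G_j$ and analogously $\hat\La_k$. My goal is to produce a permutation $\s:\{1,\dotsc,n\}\to\{1,\dotsc,m\}$ together with intertwinings $L\La_{\s(i)}\prec_{M}A\rtimes\hat\G_i$ in Popa's sense. I would proceed by induction on $n$: the base case $n=1$ amounts to the statement that a group in $\Cal{QH}_{\rm reg}$ is ``prime'' under measure equivalence, and the inductive step uses the intertwining to peel off a single factor and descend to a measure coupling between $\hat\G_i$ and $\hat\La_{\s(i)}$. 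Combining the intertwinings with the hypothesis $m\geq n$ and the commutation of $\{L\La_k\}$ then forces $m=n$ and $\s$ to be a bijection, after which a standard Ozawa--Popa/Sako argument upgrades each intertwining to an identification of the corresponding Cartan extensions, producing the required stably orbit equivalent actions of $\G_i$ and $\La_{\s(i)}$.

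The key technical ingredient is a Cartan-relative dichotomy derived from the bi-exactness of each $\G_i$: for any commuting diffuse subalgebras $P,Q\subset M$ and any $i\in\{1,\dotsc,n\}$, either $Q\prec_{M}A\rtimes\hat\G_i$ or $P$ is amenable relative to $A\rtimes\hat\G_i$ inside $M$. To establish this, one runs the proof of Theorem \ref{solidity} relative to the inclusion $A\rtimes\hat\G_i\subset M$: the proper quasi-$1$-cocycle on $\G_i$ into a weakly-$\ell^2$ representation gives rise to an unbounded closable derivation on $M$ which vanishes on $A\rtimes\hat\G_i$, hence to a malleable deformation of $M$ relative to that subalgebra; the spectral-gap/weak-$\ell^2$ mechanism of Theorem \ref{solidity}, in combination with the exactness of $\G_i$, then yields the dichotomy. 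Applying this with $P=L\La_{k_1}$ and $Q=L\La_{k_2}$ for varying pairs $k_1\neq k_2$ produces the desired family of intertwinings, which assemble into $\s$ via the inductive scheme above.

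The main obstacle is the Cartan-relative dichotomy: one must verify that the deformation coming from the proper quasi-$1$-cocycle on $\G_i$ interacts correctly with the Cartan subalgebra $A$, and that the Akemann--Ostrand-type boundary arguments underlying the solidity proof descend uniformly to the relative setting. The template is that of Theorem \ref{prime}, with the role of the scalar subalgebra $\bb C\subset L\G$ replaced throughout by the Cartan subalgebra $A\subset M$; carrying this out rigorously---in particular handling the $L^2$-bimodule geometry of $M$ over $A\rtimes\hat\G_i$ and the additional amenability conditions modulo $A$---is delicate and constitutes the technical heart of the proof.
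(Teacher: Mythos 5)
Your high-level plan---translate the ME hypothesis into a Cartan-preserving isomorphism $\phi\colon A\rtimes\G\to B\rtimes\La$, produce intertwinings between the two product decompositions via an amenability/intertwining dichotomy, run an induction to recover a bijection between the factors, and finally upgrade intertwinings to a Cartan-respecting identification---is essentially the paper's plan. The final upgrade step, which you describe as a standard Ozawa--Popa/Sako argument, is in the paper Proposition 8.4 of \cite{ipp} combined with Theorem A2 of \cite{PoBe}, and you would want to invoke those explicitly rather than gesturing at them; but that part is not where the difficulty lies.

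The technical heart, however, is where your proposal diverges from the paper and where it contains a genuine gap. You propose to derive the dichotomy from ``an unbounded closable derivation on $M$ which vanishes on $A\rtimes\hat\G_i$,'' obtained from the proper quasi-$1$-cocycle on $\G_i$. This step fails: a \emph{quasi}-cocycle does not give a closable derivation. Only genuine cocycles do, and the entire point of the paper's machinery is to circumvent exactly this obstacle. The paper never forms a derivation; it exponentiates the array to get the one-parameter family $V_t$ on the Gaussian-extended Roe algebra (\S\ref{path}) and controls its convergence only in the operator norm on the reduced crossed product $\rm C^\ast$-algebra, then passes to the von Neumann algebra via local reflexivity (exactness). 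This is emphatically \emph{not} a malleable deformation of $M$ into itself---the introduction stresses that ``these deformations no longer mapped $L\G$ into itself''---so the framing ``hence to a malleable deformation of $M$'' is also not available. Relatedly, you invoke ``the spectral-gap/weak-$\ell^2$ mechanism of Theorem \ref{solidity},'' but the paper explicitly says, before Theorem \ref{solidity1}, that it \emph{cannot} appeal to spectral gap arguments and must instead use Haagerup's criterion for amenability (Proposition \ref{108}).

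A second substantive difference: the dichotomy you state (``either $Q\prec_M A\rtimes\hat\G_i$ or $P$ is amenable \emph{relative to} $A\rtimes\hat\G_i$'') is a relative-amenability dichotomy that the paper does not prove. What the paper actually uses is Theorem \ref{solidity1}, whose conclusion is that $P'\cap M$ is \emph{absolutely} amenable or $P\prec_M L^\infty(X)\rtimes\Sigma$ for some $\Sigma$ in a prescribed family. To make that theorem applicable with the family $\{\hat\G_i\}_i$, the paper assembles a \emph{single} array on $\G=\G_1\times\dotsb\times\G_n$ that is proper relative to the whole family $\{\hat\G_i\}$, via the normalized tensor product of arrays (Proposition \ref{productarrays}). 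This construction is absent from your proposal: a proper quasi-cocycle on one factor $\G_i$, extended trivially, is not proper on $\G$, and you would need either the tensor-array device or a genuinely relative argument (of Chifan--Peterson or Popa--Vaes type) to recover the required dichotomy. As written, the passage from your Cartan-relative dichotomy to its proof does not close, and the route you sketch (derivation, malleable deformation, spectral gap) would not close it.
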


\subsection*{On the method of proof} This paper began as an attempt to chart a ``middle path'' between the solidity theorems of Ozawa, Popa, and Peterson by recasting Ozawa's approach to solidity effectively as a deformation/rigidity argument. We did so by finding a ``cohomological'' characterization of Ozawa's notion of bi-exactness \cite{OzKurosh}. Interestingly, our reformulation of bi-exactness has many affinities with (strict) cohomological definitions of negative curvature proposed by Monod and Shalom \cite{MSoe} and Thom \cite{Tho}.

Working from the cohomological perspective, we were able construct ``deformations'' of $L\G$. Though these
``deformations'' no longer mapped $L\G$ into itself, we were still able to control their convergence on a weakly dense ${\rm
C}^\ast$-subalgebra of $L\G$ namely, the reduced group ${\rm C}^\ast$-algebra ${\rm C}_\la^\ast(\G)$, then borrow Ozawa's insight
of using local reflexivity to pass from ${\rm C}_\la^\ast(\G)$ to the entire von Neumann algebra.

After this initial undertaking had been completed, we turned our attention to applying these techniques to the foundational methods which Ozawa and Popa developed in proving strong solidity of free group factors. Our approach through deformation/rigidity-type arguments allowed us to exploit the ``compactness'' of deformations coming from quasi-cocycles to achieve a finer degree of control than is afforded by the use of bi-exactness. This extra control was crucial in our adaptation of Ozawa and Popa's  fundamental techniques in the proof of Theorem \ref{strongsolidity}. This should be considered the main technical advance of the paper.

\subsection*{Acknowledgements} We thank Jesse Peterson for valuable discussions on this work. His comments at an early stage of this project were instrumental in helping this work to assume its final form. The second author extends his gratitude to Rufus Willett for several interesting conversations around Ozawa's solidity theorem. We also warmly thank Dietmar Bisch, Adrian Ioana, Narutaka Ozawa, Sorin Popa, and Stefaan Vaes for their useful comments and suggestions regarding this manuscript. We finally thank the anonymous referee, whose numerous comments and suggestions greatly improved the exposition.

\section{Cohomological-type properties and negative curvature}\label{sec:cohom} Let $\G$ be a countable discrete group. Recall that a \textit{length function} $\abs{\,\cdot\,}: \G\to \bb R_{\geq 0}$ is a map satisfying: (1) $\abs{\g} = 0$ if and only if $\g = e$ is the identity; (2) $ \abs{\g^{-1}} = \abs{\g}$, for all $\g\in \G$; and (3) $\abs{\g\de} \leq \abs{\g} + \abs{\de}$, for all $\g,\de\in \G$.

\subsection{Arrays} We introduce a general class of embeddings of a group $\G$ into Hilbert space that are compatible with some action of $\G$ by orthogonal transformations, which we refer to as arrays. These ``arrays'' distill the essential structural properties of proper affine isometric actions while adding a substantial amount of ``geometric'' flexibility. In fact, the simplest example of an array will be a length function, which can be thought of as taking values in the trivial orthogonal representation.

\begin{definition}\label{def:array} Let $\pi: \G\to  \Cal O( \Cal H_\pi)$ be an orthogonal representation of a countable discrete group $\G$ and let $\mathcal G$ be a family of subgroups of $\G$. A map $q: \G\to \Cal  H_\pi$ is called an \textit{array}  for every finite subset $F\subset \G$ there exists $K\geq 0$ such that
\begin{equation}\label{weakquasi} \nor{\pi_\g(q(\de)) - q(\g\de)}\leq K,\end{equation} for all $\g\in F$, $\de\in \G$ (i.e., $q$ is \textit{boundedly equivariant}). It is an easy exercise to show that for any array $q$ there exists a length function on $\G$ which bounds $\nor{q(\g)}$ from above. An array $q: \G\to \Cal  H_\pi$ is said to be:
\begin{itemize}
\item \textit{proper with respect to $\mathcal G$} if the map $\g\mapsto \nor{q(\g)}$ is proper with respect to the family $\mathcal G$, i.e., for all $C>0$ there exist finite subsets $G,H\subset \G$, $\mathcal K\subset \mathcal G$ such that  $$\{\g\in \G : \|q(\g)\|\leq C\}\subseteq G\mathcal K H.$$
If $\mathcal G = \{\{e\}\} $, then this is the usual notion of metric properness, in which case the map $q$ itself is referred to as \textit{proper};
\item \textit{symmetric} if $\pi_\g (q(\g^{-1})) = q(\g)$ for all $\g\in \G$;
\item \textit{anti-symmetric} if $\pi_\g(q(\g^{-1})) = -q(\g)$ for all $\g\in \G$; and
\item \textit{uniform} if there exists a proper length function $\abs{\,\cdot\,}$ on $\G$ and an increasing function $\rho: \bb R_{\geq 0}\to \bb R_{\geq 0}$ such that $\rho(t)\to \infty$ as $t\to \infty$ and such that
\begin{equation*} \rho(\abs{\g^{-1}\de}) \leq \nor{q(\g) - q(\de)},
\end{equation*}
for all $\g,\de\in \G$.
\end{itemize}

\end{definition}

\begin{remark} In the preceding definition we could as well have relaxed the condition of strict (anti-)symmetry to merely the condition that $\nor{\pi_\g(q(\g^{-1})) \pm q(\g)}$ is bounded. However, it is easy to check that for any such function $q$, there exists an array $\tilde q$ which is a bounded distance from $q$; namely, $\tilde q(\g) = \frac{1}{2}(q(\g) \pm \pi_\g(q(\g^{-1})))$. This observation is essentially due to Andreas Thom \cite{Tho}.
\end{remark}

It is easy to see that a length function on a group is a uniform, symmetric array for the trivial representation. Our primary examples of (uniform) anti-symmetric arrays will be quasi-1-cocycles.

\begin{definition}\label{quasi-cocycle} Let $\G$ be a countable discrete group and $\pi: \G\ra  \Cal O( \Cal H_\pi)$ be an orthogonal representation of $\G$ on a real Hilbert space $ \Cal H_\pi$. A map $q: \G\ra \Cal H_\pi$ is called a \emph{quasi-1-cocycle} for the representation $\pi$ if one can find a constant $K\geq 0$ such that for all $\g,\la\in \Gamma$ we have\begin{equation}\label{1}\|q(\g\la)-q(\g)-\pi_\g(q(\la))\| \leq K.\end{equation}
\end{definition}

We denote by $D(q)$ the \textit{defect} of the quasi-$1$-cocycle $q$, which is the infimum of all $K$ satisfying equation (\ref{1}). Notice that
when the defect is zero the quasi-1-cocycle $q$ is actually a $1$-cocycle for $\pi$ \cite{BV}. In the sequel, we will drop the ``1'' and refer
to (quasi-)1-cocycles as (quasi-)cocycles. Again, without (much) loss of generality we will require a quasi-cocycle $q$ to be anti-symmetric,
since every quasi-cocycle $q$ is a bounded distance from some anti-symmetric quasi-cocycle $\tilde q$, cf. \cite{Tho}, which will suffice for
our purposes.

A distinct advantage to working with arrays is that, unlike cocycles (or even quasi-cocycles), there is a well-defined notion of a tensor product.

\begin{prop}\label{productarrays} Let $\G$ be a countable discrete group. Let $\pi_i:\G\ra \mathcal O(\mathcal H_i)$ be an orthogonal representation for $i = 1,2$, and let  $q_i:\G\ra \Cal  H_i$ be an array for $\pi_i$. Denote by \[\kappa(\g)=\max_{i = 1,2}\|q_i(\g)\|+1\] for all $\g\in \G$. Then the map $ q_1\wedge q_2:\G\ra \Cal  H_1\otimes \Cal  H_2$ defined by \begin{equation}q_1\wedge q_2(\g)=\kappa(\g)^{-1}q_1(\g)\otimes q_2(\g)\end{equation} is an array into the tensor representation $\pi_1\otimes \pi_2$. Moreover, if the arrays $q_i$ are assumed to be symmetric, then $q_1\wedge q_2$ is symmetric. If each of the arrays $q_i$ is assumed to be proper relative to a given family $\mathcal G_i$ of subgroups  of $\G$ then $q_1\wedge q_2$ is proper with respect to the family $\mathcal G_1\cup \mathcal G_2$.\end{prop}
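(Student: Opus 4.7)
The plan is to verify the three assertions in turn, treating bounded equivariance as the main step and deducing symmetry and properness as short consequences. The only technical subtlety lies in the first: the normalizing scalar $\kappa(\g)^{-1}$ is essential for keeping the tensor bounded in norm, but it depends on $\g$ and so cannot be commuted through the representation or through the equivariance estimates cleanly; this will force a split-and-telescope maneuver.

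For bounded equivariance, I would fix a finite subset $F\subset \G$ and apply the array property of each $q_i$ to obtain $K_i\geq 0$ with $\nor{\pi_{i,\g}(q_i(\de)) - q_i(\g\de)}\leq K_i$ for all $\g\in F$, $\de\in \G$. Writing $a_i := \pi_{i,\g}(q_i(\de))$ and $b_i := q_i(\g\de)$, the reverse triangle inequality yields $\abs{\kappa(\g\de) - \kappa(\de)}\leq K':=\max(K_1,K_2)$, together with $\nor{a_i}\leq \kappa(\de)$ and $\nor{b_i}\leq \kappa(\g\de)\leq \kappa(\de) + K'$. I would then split
\begin{align*}
(\pi_1\otimes \pi_2)_\g(q_1\wedge q_2(\de)) - q_1\wedge q_2(\g\de) &= \kappa(\de)^{-1}(a_1\otimes a_2 - b_1\otimes b_2)\\
&\qquad + \bigl(\kappa(\de)^{-1} - \kappa(\g\de)^{-1}\bigr)\, b_1\otimes b_2,
\end{align*}
estimate the first summand via the telescoping identity $a_1\otimes a_2 - b_1\otimes b_2 = (a_1 - b_1)\otimes a_2 + b_1\otimes (a_2 - b_2)$, and the second using $\abs{\kappa(\de)^{-1} - \kappa(\g\de)^{-1}}\leq K'/(\kappa(\de)\kappa(\g\de))$ together with $\nor{b_1\otimes b_2}\leq \kappa(\g\de)^2$. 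The factors of $\kappa(\de)$ and $\kappa(\g\de)$ cancel, producing a uniform bound depending only on $K_1,K_2$, hence only on $F$.

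Symmetry is then immediate: if each $q_i$ is symmetric, then $\nor{q_i(\g^{-1})} = \nor{q_i(\g)}$ forces $\kappa(\g^{-1}) = \kappa(\g)$, and one computes directly that $(\pi_1\otimes \pi_2)_\g(q_1\wedge q_2(\g^{-1})) = \kappa(\g)^{-1}\pi_{1,\g}(q_1(\g^{-1}))\otimes\pi_{2,\g}(q_2(\g^{-1})) = \kappa(\g)^{-1}q_1(\g)\otimes q_2(\g) = q_1\wedge q_2(\g)$. For properness, observe that $\nor{q_1\wedge q_2(\g)}\leq C$ implies $\nor{q_1(\g)}\nor{q_2(\g)}\leq C\kappa(\g)\leq C(\nor{q_1(\g)} + \nor{q_2(\g)} + 1)$. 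Setting $a = \nor{q_1(\g)}$ and $b = \nor{q_2(\g)}$, the elementary inequality $ab\leq C(a + b + 1)$ forces either $a\leq 2C$ or $b\leq 3C$: if $a\geq 2C$ then $a - C\geq a/2$, so rearranging $ab - Cb\leq Ca + C$ yields $b\leq 3C$. Applying the assumed properness of $q_1$ with respect to $\mathcal G_1$ in the first case or of $q_2$ with respect to $\mathcal G_2$ in the second places $\g$ in a finite union of double cosets of subgroups drawn from $\mathcal G_1\cup \mathcal G_2$, completing the proof.
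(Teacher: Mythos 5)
Your proposal is correct and follows essentially the same route as the paper: control the variation $\abs{\kappa(\g\de)-\kappa(\de)}$, telescope the tensor difference while handling the normalizing scalar separately, and reduce properness to the dichotomy that one of $\nor{q_1(\g)},\nor{q_2(\g)}$ must be small. Your only departures are cosmetic — a two-term rather than three-term splitting of the equivariance estimate, and a more carefully quantified version of the properness dichotomy (the paper asserts "either $\nor{q_1(\g)}\leq C$ or $\nor{q_2(\g)}\leq C$" where your constants $2C$, $3C$ are the honest ones, though this makes no difference since properness is quantified over all $C>0$).
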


\begin{proof} For brevity, we will denote $\pi_1\otimes\pi_2$ as $\pi$ and $q_1\wedge q_2$ as $q$. First, we show that if $q_1,q_2$ are arrays then one can find a function $r:\G \ra \mathbb R_+$ such that for all $ \g, \la\in\G$ we have  \begin{equation}\label{2002}|\kappa(\g\la)-\kappa (\la)|\leq r(\g).\end{equation}

Indeed, by applying the triangle inequality, we see that \begin{equation}\label{2009}\begin{split}\kappa( \g\la)&=\max\{\|q_1(\g\la)\|,\|q_2(\g\la)\|\}+1\\&\leq \max \{\|q_1(\la)\|,\|q_2(\la)\|\}+1+\max\{\|q_1(\g\la)-(\pi_1)_{\g}(q_1(\la))\|, \| q_2(\g\la)-(\pi_2)_{\g}(q_2(\la))\|\}\\
&\leq \kappa(\la)+\max\{\|q_1(\g\la)-(\pi_1)_{\g}(q_1(\la))\|, \| q_2(\g\la)-(\pi_2)_{\g}(q_2(\la))\|\}\end{split}\end{equation}

\noindent Since $q_1$ and $q_2$ are arrays, there exists a function $\g\mapsto r'(\g)$ such that $\max\{\|q_1(\g\la)-(\pi_1)_{\g}(q_1(\la))\|, \| q_2(\g\la)-(\pi_2)_{\g}(q_2(\la))\|\}\leq r'(\g)$ for all $\la\in\G$. Using this notation (\ref{2009}) can be rephrased as $\kappa(\g\la)\leq r'(\g)+\kappa(\la)$ for all $\g,\la\in \G$. This implies that $\kappa(\la)=\kappa(\g^{-1}\g\la)\leq r'(\g^{-1})+\kappa( \g \la)$ for all $\g,\la\in \G$. Therefore, when combining the last two inequalities we conclude that \begin{equation*}|\kappa(\g\la)-\kappa(\la)|\leq\max \{r'(\g),r'(\g^{-1})\},\end{equation*}
 \noindent for all $\g, \la\in \G$. Letting $r(\g)=\max \{r'(\g),r'(\g^{-1})\} $ we obtain (\ref{2002}).

 Next, we will show that $q$ is an array; that is, that $ q$ is boundedly equivariant. Applying the triangle inequality, we have the following estimates:

\begin{eqnarray*}&&\|q(\g\la)-\pi_\g(q(\la))\|\\&=&\|\kappa(\g\la)^{-1}q_1(\g\la)\otimes q_2(\g\la)-\kappa(\la)^{-1}(\pi_1)_{\g}(q_1(\la))\otimes (\pi_2)_{\g}(q_2(\la))\|\\
&\leq &\kappa( \g \la)^{-1}\|\left (q_1(\g\la)-(\pi_1)_{\g}(q_1(\la))\right )\otimes q_2(\g\la)\|\\&\quad&+ \left |\kappa(\g \la)^{-1}-\kappa(\la)^{-1} \right |\|\pi_{\g}(q_1(\la))\otimes q_2(\g\la)\|\\
&\quad&+\kappa(\la)^{-1}\|(\pi_1)_{\g}(q_1(\la))\otimes \left ( q_2(\g\la)-(\pi_2)_{\g}(q_2(\la))\right)\|\\
&\leq& \|q_1(\g\la)-(\pi_1)_{\g}(q_1(\la))\|+ \| q_2(\g\la)-(\pi_2)_{\g}(q_2(\la))\|+|\kappa(\g \la)-\kappa(\la)|\\
&\leq& \|q_1(\g\la)-(\pi_1)_{\g}(q_1(\la))\|+ \| q_2(\g\la)-(\pi_2)_{\g}(q_2(\la))\|+|r( \g)|.\end{eqnarray*}

Since $q_i$ is boundedly equivariant, $i=1,2$, the previous inequality combined with (\ref{2002}) shows that $q$ is boundedly equivariant.

From the definitions, one can easily see that if each $q_i$ is symmetric, then $q$ is again symmetric.

Finally, we verify the properness condition. Let $C>0$ be a fixed arbitrary constant and denote by $ K=\{\g\in \G : \| q(\g)\|\leq C\}$. A straightforward computation shows that if $\g\in  K$ then either $\|q_1(\g)\|\leq C$ or   $\|q_2(\g)\|\leq C$. Since each $q_i$ is proper relative to $ G_i$, so there exist finite sets $G_1, G_2, H_1, H_2 \subset \G$, $ K_1\subset \mathcal G_1$, and $ K_2\subset \mathcal G_2$ such that $ K \subseteq G_1  K_1H_1\cup G_2 K_2H_2$. Since this holds for all $C>0$, we have obtained that $q$ is proper relative to $\mathcal G_1\cup \mathcal G_2$.  \end{proof}

\begin{prop}\label{prop:symm-array} Let $\pi: \G\to \Cal  O( \Cal H_\pi)$ be an orthogonal representation. Assume that $\G$ admits a proper array $q: \G\to\Cal H_\pi$ for $\pi$ which is boundedly bi-equivariant, i.e., $ \| q(\g\de \la) - \pi_\g q(\de) \| < C(\g,\la), $
for all $ \g, \de, \la\in \G $. Here, $C(\g,\la)$ denotes a constant only depending on $\g$ and $\la$. Then there exists a symmetric proper array $\tilde q: \G\ra \mathcal H_\pi\otimes \mathcal H_\pi$ for the diagonal representation $\pi\otimes\pi$.
\end{prop}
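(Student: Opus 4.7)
The plan is to symmetrize the wedge product of $q$ with its ``dual'' array, obtained by inverting the group element and applying the representation. Concretely, set
\[
q'(\g) := \pi_\g(q(\g^{-1})),
\]
so that $\|q'(\g)\| = \|q(\g^{-1})\|$. The first key observation is that $q'$ is itself a proper array for $\pi$: properness is immediate since inversion sends finite sets to finite sets, and bounded equivariance follows from bi-equivariance of $q$ via
\[
\|\pi_\g q'(\de) - q'(\g\de)\| = \|\pi_{\g\de}(q(\de^{-1}) - q(\de^{-1}\g^{-1}))\| = \|q(\de^{-1}) - q(\de^{-1}\g^{-1})\|,
\]
which is bounded by $C(e,\g^{-1})$ (apply the bi-equivariance hypothesis with the triple $(e,\de^{-1},\g^{-1})$).

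Next, I would apply Proposition \ref{productarrays} to the pair $(q,q')$ to produce the array $q\wedge q': \G \to \Cal H_\pi\otimes \Cal H_\pi$ for $\pi\otimes\pi$, where $q\wedge q'(\g) = \kappa(\g)^{-1} q(\g)\otimes q'(\g)$ with $\kappa(\g)=\max\{\|q(\g)\|,\|q'(\g)\|\}+1$. Note that $\kappa(\g) = \kappa(\g^{-1})$ by construction. Letting $U$ denote the flip isometry $U(v\otimes w) = w\otimes v$, which intertwines $\pi\otimes\pi$ with itself, I would define
\[
\tilde q(\g) := (I+U)(q\wedge q')(\g) = \kappa(\g)^{-1}\bigl(q(\g)\otimes q'(\g) + q'(\g)\otimes q(\g)\bigr).
\]
Since $q\wedge q'$ is an array and $U$ is a bounded intertwiner, $\tilde q$ is an array. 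Symmetry follows from the computation
\[
(\pi_\g\otimes\pi_\g)((q\wedge q')(\g^{-1})) = \kappa(\g)^{-1} q'(\g)\otimes q(\g) = U(q\wedge q')(\g),
\]
so that $(\pi_\g\otimes\pi_\g)(\tilde q(\g^{-1})) = U(q\wedge q')(\g) + (q\wedge q')(\g) = \tilde q(\g)$, using that $U$ commutes with $\pi_\g\otimes\pi_\g$.

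The step I expect to require the most care is verifying properness, since a priori the two summands in $\tilde q$ could cancel. The direct norm computation gives
\[
\|\tilde q(\g)\|^2 = 2\kappa(\g)^{-2}\bigl(\|q(\g)\|^2\|q'(\g)\|^2 + \langle q(\g),q'(\g)\rangle^2\bigr) \geq 2\kappa(\g)^{-2}\|q(\g)\|^2\|q'(\g)\|^2,
\]
so the cross term is actually nonnegative and no cancellation occurs. Using $\kappa(\g)\geq \max(\|q(\g)\|,\|q'(\g)\|)$, one deduces $\|\tilde q(\g)\| \gtrsim \min(\|q(\g)\|,\|q(\g^{-1})\|)$ outside a finite set, and properness of $\tilde q$ then follows from properness of $q$ together with the fact that $\{\g : \|q(\g^{-1})\|\leq C\}$ is the inverse of a finite set, hence finite.
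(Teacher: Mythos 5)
Your proposal is correct and follows essentially the same route as the paper's proof: both introduce $q'(\g)=\pi_\g(q(\g^{-1}))$, verify its bounded equivariance from bi-equivariance via the triple $(e,\de^{-1},\g^{-1})$, symmetrize the wedge product (your $(I+U)(q\wedge q')$ agrees with the paper's $\tfrac12(q\wedge q'+q'\wedge q)$ up to a factor of $2$, since $\kappa$ is symmetric in $q,q'$), and deduce properness from the identity $\|x\otimes y+y\otimes x\|^2=2(\|x\|^2\|y\|^2+\langle x,y\rangle^2)$. Your treatment of the normalization $\kappa(\g)^{-1}$ in the properness step is in fact slightly more careful than the paper's.
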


\begin{proof}

We begin by observing that if $q$ is proper and boundedly bi-equivariant, then the map $q':\G\ra \mathcal H_\pi$ defined by  $ q'(\g) = \pi_\g (q(\g^{-1})) $ is also boundedly equivariant and obviously proper.

Indeed, to see this we note that be definition we have \begin{eqnarray*}\| q'(\g\de) -\pi_\g( q'(\de))\|=\|  q(\de^{-1}\g^{-1}) -  q(\de^{-1}) \|\leq C(e,\g^{-1}).\end{eqnarray*} Notice that the above constant depends only on $\g$.

Now we consider $\tilde q:\G\ra \mathcal H_\pi\otimes \mathcal H_\pi$ to be the symmetric product of the boundedly equivariant maps $\tilde q(\g) = \frac{1}{2} \left (q \wedge q' (\g) + q' \wedge q(\g) \right )$, which is also boundedly equivariant from the previous proposition. It is a straightforward exercise to check that $\tilde q$ is symmetric, i.e.\  $(\pi\otimes \pi)_\g (\tilde q(\g^{-1})) = \tilde q(\g)$.

Finally, since the square of the norm of the symmetric product of two vectors $x$ and $y$ is $ \|x\|^2\|y\|^2 + |\langle x,y\rangle |^2 $, we have that $q$ is proper implies that $\tilde q$ is also proper.

\end{proof}

\subsection{The classes $\Cal{QH}$ and $\Cal{QH}_{\rm reg}$}

We now proceed to describe some ``cohomological'' properties of countable discrete groups which capture many aspects of negative curvature from
the perspective of representation theory.

\begin{definition}\label{defn:QH} We say that a countable discrete group $\G$ is in the class $\Cal{QH}$ if it admits a proper, symmetric array $q: \G\to \Cal H_\pi$ for some non-amenable unitary representation $\pi:\G\to \Cal U( \Cal H_\pi)$. If the representation $\pi$ can be chosen to be weakly-$\ell^2$, then we say that $\G$ belongs to the class $\Cal{QH}_{\rm reg}$.
\end{definition}

By Proposition \ref{prop:symm-array} we see that the class $\Cal{QH}_{\rm reg}$ generalizes the class $\Cal D_{\rm reg}$ of Thom \cite{Tho} and that the class $\Cal{QH}$ contains all groups having Ozawa and Popa's property (HH) \cite{OPCartanII}.

\begin{prop}\label{prop:qh_permanence} The following statements are true.

\begin{enumerate}

\item If $\G_1,\G_2\in\Cal {QH}$, then so are $\G_1\times \G_2$ and $\G_1\ast \G_2$.

\item If $\G_1,\G_2\in\Cal {QH}_{\rm reg}$, then $\G_1\ast \G_2\in\Cal {QH}_{\rm reg}$.

\item If $\G_1,\G_2\in\Cal {QH}_{\rm reg}$ are non-amenable, then $\G_1\times \G_2\not\in\Cal {QH}_{\rm reg}$.

\item If $\G$ is a lattice in a simple connected Lie group with real rank one, then $\G\in\Cal {QH}_{\rm reg}$.

\item If $\G\in \Cal{QH}$, then $\G$ is not inner amenable. If in addition $\G$ is weakly amenable, then $\G$ has no infinite normal amenable subgroups.

\end{enumerate}
\end{prop}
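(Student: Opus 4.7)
The plan is to treat each of the five permanence properties with a separate argument: (1), (2), and (4) I would establish by explicit construction or citation, while (3) and (5) I would dispatch by contradiction using almost-invariant vectors. For (1) and (2), starting with proper symmetric arrays $q_i\colon \G_i \to \Cal H_{\pi_i}$ for non-amenable (resp.\ weakly-$\ell^2$) representations $\pi_i$, I would construct the orthogonal-sum array $q(\g_1, \g_2) := q_1(\g_1) \oplus q_2(\g_2)$ on $\G_1 \times \G_2$, equivariant under $(\pi_1 \circ p_1) \oplus (\pi_2 \circ p_2)$, which inherits properness, symmetry, and the appropriate representation-theoretic property from the factors. For $\G_1 \ast \G_2$, I would use the natural action on the Bass--Serre tree $T$ to get the anti-symmetric cocycle $\g \mapsto \chi_{[o, \g o]} \in \ell^2(E(T))$, proper relative to $\{\G_1, \G_2\}$ and into a weakly-$\ell^2$ representation, then combine via Propositions \ref{productarrays} and \ref{prop:symm-array} with the arrays on the factors (suitably extended by induction) to produce an absolutely proper symmetric array. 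For (4), in the co-compact case $\G$ is hyperbolic and Mineyev--Monod--Shalom \cite{MMS} supply a proper anti-symmetric quasi-1-cocycle into a weakly-$\ell^2$ representation, which I would symmetrize via the Thom trick following Definition \ref{def:array}; the non-uniform case would then be handled by Shalom's integrability theorem (Theorem 3.7 of \cite{Shalom}), as flagged in the introduction.

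For (3), I would argue by contradiction. Suppose $\G_1 \times \G_2 \in \Cal{QH}_{\rm reg}$ with proper symmetric array $q$ into a weakly-$\ell^2$ representation $\pi$. The key input is that symmetric arrays are automatically boundedly right-equivariant: from $q(\eta \g) = \pi_{\eta\g} q((\eta\g)^{-1})$ and left bounded equivariance with $\g^{-1}$ fixed, a short computation yields $\nor{q(\eta\g) - q(\eta)} \leq C_\g$ uniformly in $\eta$ for each fixed $\g$. Picking $h_n \to \infty$ in $\G_2$ and setting $\eta_n := q(e, h_n)/\nor{q(e, h_n)}$, the commutativity of $(\g_1, e)$ with $(e, h_n)$ gives
\[ \nor{\pi_{(\g_1, e)} q(e, h_n) - q(e, h_n)} \leq \nor{\pi_{(\g_1, e)} q(e, h_n) - q(\g_1, h_n)} + \nor{q((e, h_n)(\g_1, e)) - q(e, h_n)}, \]
with the first term bounded uniformly in $h_n$ by left bounded equivariance and the second by right bounded equivariance. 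Dividing by $\nor{q(e, h_n)} \to \infty$ shows $(\eta_n)$ is a $\pi|_{\G_1}$-almost-invariant unit sequence. Since $\pi|_{\G_1}$ remains weakly-$\ell^2$ as a representation of the non-amenable group $\G_1$, this contradicts Kesten's theorem.

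For (5), I would again argue by contradiction. Assuming $\G$ is inner amenable with unit vectors $\xi_n \in \ell^2(\G \setminus \{e\})$ satisfying $\nor{\operatorname{Ad}(\g)\xi_n - \xi_n}_2 \to 0$, properness of $q$ lets me arrange $\nor{q(\de)} \geq n$ on $\operatorname{supp}(\xi_n)$, and I would form the states
\[ \omega_n(T) := \sum_\de \abs{\xi_n(\de)}^2\, \frac{\ip{T q(\de)}{q(\de)}}{\nor{q(\de)}^2} \]
on $\Cal B(\Cal H_\pi)$. Combining left bounded equivariance with right bounded equivariance gives $\pi_{\g^{-1}} q(\de) = q(\g^{-1}\de\g) + O_\g(1)$ and $\nor{q(\g^{-1}\de\g)}/\nor{q(\de)} \to 1$ on $\operatorname{supp}(\xi_n)$, so the substitution $\eta := \g^{-1}\de\g$ together with the conjugation-invariance of $\xi_n$ yields $\omega_n(\pi_\g T \pi_\g^*) - \omega_n(T) \to 0$. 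Any weak-$*$ cluster point of $(\omega_n)$ is then a $\pi$-conjugation-invariant state on $\Cal B(\Cal H_\pi)$, i.e.\ $\pi$ is amenable in Bekka's sense, contradicting the hypothesis. For the second half of (5), given an infinite normal amenable subgroup $N \triangleleft \G$, I would use a two-sided F{\o}lner sequence in $N$ combined with the cb-multipliers from weak amenability of $\G$ to regularize the characteristic functions of F{\o}lner sets, producing unit vectors in $\ell^2(\G \setminus \{e\})$ almost invariant under $\operatorname{Ad}(\G)$; the first half of (5) would then conclude. I expect the main technical obstacle to be precisely this weak-amenability averaging, which is needed to suppress the potentially wild action of $\G/N$ on $N$ by outer automorphisms.
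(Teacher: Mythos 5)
Most of your proposal runs parallel to the paper's own argument. For (1), (2), and (4) the paper simply defers to Thom \cite{Tho} and to \cite{MMS}/\cite{Shalom}, which is what you do in slightly more detail. For (3) your argument is the paper's argument in different clothing: the paper uses symmetry plus bounded left-equivariance to bound $\nor{q(g)}$ for $g$ in the second factor directly via the spectral-gap inequality $\nor{\xi}\leq K'\sum_{s\in S}\nor{\la_s\xi-\xi}$, whereas you normalize $q(e,h_n)$ and exhibit almost-invariant vectors for $\pi|_{\G_1}$; the key observation in both cases is that symmetry upgrades bounded left-equivariance to bounded right-equivariance, and your derivation of that is correct. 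The first half of (5) is likewise the paper's proof in sequential form (approximately conjugation-invariant $\ell^2$-vectors and the states $\omega_n$, versus the paper's singular conjugation-invariant mean composed with the u.c.p.\ map $T(x)(\g)=\nor{q(\g)}^{-2}\ip{xq(\g)}{q(\g)}$); both hinge on the same estimate $\nor{q(\g^{-1}\de\g)-\pi_{\g^{-1}}(q(\de))}\leq K_\g$ and on properness to kill the error terms.

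The genuine gap is in the second half of (5). The paper does \emph{not} prove from scratch that an infinite normal amenable subgroup of a weakly amenable group forces inner amenability; it quotes Theorem A of \cite{OzCBAP}, which produces a conjugation-invariant mean supported on the normal amenable subgroup, and then feeds that into the first half. Your proposed replacement---regularizing characteristic functions of a two-sided F{\o}lner sequence in $N$ by completely bounded multipliers to get $\operatorname{Ad}(\G)$-almost-invariant vectors---is exactly the content of Ozawa's theorem, and the one-sentence sketch does not constitute a proof: the conjugation action of $\G$ on $N$ can be wildly non-amenable as an action (this is the entire difficulty that \cite{OzCBAP} is written to overcome), so there is no reason a F{\o}lner sequence for $N$ admits any useful $\operatorname{Ad}(\G)$-regularization without the full force of Ozawa's averaging argument. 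You correctly identify this as the weak point; the fix is simply to cite \cite{OzCBAP}, Theorem A, as the paper does. A second, minor inaccuracy: in (4) the passage from the (anti-symmetric) proper quasi-cocycle to a proper \emph{symmetric} array is not the ``Thom trick'' of the remark after Definition \ref{def:array} (which only corrects an approximately symmetric map to an exactly symmetric one, and applied to an anti-symmetric quasi-cocycle would produce a bounded map); it is the tensor-square construction of Proposition \ref{prop:symm-array}, which uses that quasi-cocycles are boundedly bi-equivariant.
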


Statement (5) is essentially Proposition 2.1 in \cite{OPCartanII} combined with Theorem A in \cite{OzCBAP}.

\begin{proof} Statements (1) and (2) follow exactly as they do for groups which admit a proper cocycle into some non-amenable (respectively, weakly $\ell^2$) unitary representation, cf. \cite{Tho}.

We prove statement (3) under the weaker assumption that $\G\cong \La\times \Sg$, where $\La$ is non-amenable and $\Sg$ is an arbitrary infinite
group. By contradiction, assume $\G$ admits a proper, symmetric, boundedly equivariant map $q : \G\to  \ell^2(\G)$ (by inspection, the
same argument will hold for $q : \G\to \Cal H_\pi$ for any weakly-$\ell^2$ unitary representation $\pi$). Since the action of $\La$ on
$\ell^2(\G)$ has spectral gap and admits no non-zero invariant vectors, there exists a finite, symmetric subset $S\subset \La$ and $K' >0$ such
that
\begin{equation*} \nor{\xi}\leq K'\sum_{s\in S}\nor{\la_s(\xi) - \xi},
\end{equation*}
for all $\xi\in \ell^2(\G)$. Let $K''\geq 0$ be a constant so that inequality (\ref{weakquasi}) is satisfied for $S\subset \G$, and set $K =
\max\{K',K''\}$. We then have for any $g\in \Sg$ that
\begin{equation}\label{eq:sg_bound}\begin{split}
\nor{q(g)} & \leq K\sum_{s\in S}\nor{\la_s(q(g)) - q(g)} \\
& \leq K\sum_{s\in S}\nor{q(sg) - q(g)} + K^2\abs{S} \\
& = K\sum_{s\in S}\nor{q(gs) - q(g)} + K^2\abs{S} \\
& = K\sum_{s\in S}\nor{\la_{gs}(q(s^{-1}g^{-1})) - \la_g(q(g^{-1}))} + K^2\abs{S} \\
& = K\sum_{s\in S}\nor{\la_{s^{-1}}(q(g^{-1})) - q(s^{-1}g^{-1})} + K^2\abs{S}\leq 2K^2\abs{S}.
\end{split}
\end{equation}
Hence, $\nor{q(g)}$ is bounded on $\Sg$, which contradicts that $q$ is proper.

For statement (4), it is well known that any co-compact lattice in a simple Lie group with real rank one is Gromov hyperbolic; hence, by
\cite{MMS} it admits a proper quasi-cocycle into the left-regular representation. A result of Shalom, Theorem 3.7 in \cite{Shalom}, shows that any
lattice in such a Lie group is integrable, and therefore $\ell^1$-measure equivalent to any other lattice in the same Lie group. It is easy to
check that having a proper quasi-cocycle into the left-regular representation is invariant under $\ell^1$-measure equivalence, cf. Theorem 5.10
in \cite{Tho}.

In order to prove statement (5), we assume by contradiction that $\G$ is inner amenable, i.e., there exists a state $\vp$ on $\ell^\infty(\G)$
such that $\vp \perp \ell^1(\G)$ and $\vp\circ{\rm Ad}(\g) = \vp$ for all $\g\in \G$. Let $q: \G\to \Cal H_\pi$ be an array into a non-amenable
representation $\pi$. Define a u.c.p.\ map $T: \fr B(\Cal H_\pi)\to \ell^\infty(\G)$ by $T(x)(\g) = \frac{1}{\nor{q(\g)}^2}\ip{x
q(\g)}{q(\g)}$. Similarly to the proof of statement (3), by symmetry and bounded equivariance, for every $\g\in \G$, there exists $K\geq
0$ such that
\begin{equation} \nor{q(\g^{-1}\de\g) - \pi_{\g^{-1}}(q(\de))}\leq K,
\end{equation}
for all $\de\in \G$. Since $q$ is proper, this implies that the state $\Phi = \vp\circ T$ on $\fr B(\Cal H_\pi)$ is ${\rm Ad}(\pi)$-invariant.
However, this contradicts the fact that $\pi$ is a non-amenable representation. The remaining assertion follows by Theorem A in \cite{OzCBAP}.

\end{proof}

The class $\Cal{QH}_{\rm reg}$ is intimately related with Ozawa's class of bi-exact groups (often denoted as the class $ \Cal S$ in the literature, e.g., \cite{OzKurosh, Sako}). A reader unfamiliar with the theory of exact groups should consult Appendix \ref{sec:biexact} before proceeding further.

\begin{definition}[Ozawa \cite{BrOz,OzKurosh}]\label{defn:starA} A countable discrete group $\G$ is said to be \textit{bi-exact} if it admits a sequence $\xi_n : \beta'\G\to \ell^2(\G)$ of continuous maps such that $\xi_n\geq 0$, $\nor{\xi_n(x)}_2 = 1$, for all $x\in \beta'\G$, $n\in \bb N$, which satisfy
\begin{equation}\label{eq:starA} \sup_{x\in \beta'\G}\nor{\la_\g(\xi_n(x)) - \xi_n(\g x\de)}_2\to 0,
\end{equation}
for all $\g,\de\in \G$. Here $\beta'\G=\beta\Gamma\setminus \Gamma$ denotes the Stone--\v{C}ech boundary.\end{definition}
It is easy to see that if $\G$ is bi-exact in the sense of Definition 15.1.2 of \cite{BrOz} if and only if $\G$ is bi-exact in the sense of the
above definition. By the same proof that ``property A $\Rightarrow$ coarse embeddability into Hilbert space'' (cf.\ \cite{BrOz, Roe}), we have the following

\begin{prop}\label{prop:starA} If $\G$ is bi-exact, then it admits a uniform array into $\ell^2(\G)^{\oplus\infty}$. In particular, $\G$ is exact and belongs to the class $\Cal{QH}_{\rm reg}$.
\end{prop}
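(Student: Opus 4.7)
The plan is to mirror the classical argument that exactness (Property A) implies coarse embeddability into Hilbert space, cf.\ \cite{BrOz, Roe}, augmented by approximate left-equivariance so as to promote the coarse embedding into a uniform array. First, from the continuous boundary maps $\xi_n\colon \beta'\G \to \ell^2(\G)$ provided by bi-exactness, by a standard extension/restriction argument I extract a sequence $\eta_n\colon \G \to \ell^2(\G)$ of positive unit vectors, each finitely supported inside a translate $\g K_n$ of some finite $K_n \subset \G$ of diameter $R_n\to\infty$, satisfying both the Property A closeness $\nor{\eta_n(\g)-\eta_n(\de)} < 2^{-n}$ whenever $\abs{\g^{-1}\de}\leq n$ and the approximate left-equivariance $\sup_{\g\in\G}\nor{\la_s\eta_n(\g)-\eta_n(s\g)} < 2^{-n}$ whenever $\abs{s}\leq n$; the translation between the continuous boundary formulation of bi-exactness and such $\G$-valued approximations is routine via $\Prob(\G)$-valued square roots.

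Next I define the candidate uniform array
\begin{equation*}
q(\g)\;:=\;\bigoplus_{n=1}^\infty \bigl(\eta_n(\g)-\eta_n(e)\bigr)\;\in\;\bigoplus_{n=1}^\infty \ell^2(\G)\;\cong\;\ell^2(\G)^{\oplus\infty},
\end{equation*}
and verify the required properties. Splitting $\sum_n \nor{\eta_n(\g)-\eta_n(e)}^2$ at $n=\abs{\g}$ (closeness controls the tail by $\sum_{n\geq \abs{\g}} 4^{-n}$, the head is bounded trivially by $4\abs{\g}$) yields $\nor{q(\g)}^2=O(\abs{\g})$, so $q$ is well-defined. For bounded equivariance, expanding
\begin{equation*}
\la_\g q(\de)-q(\g\de)\;=\;\bigoplus_{n=1}^\infty \bigl[\bigl(\la_\g\eta_n(\de)-\eta_n(\g\de)\bigr)-\bigl(\la_\g\eta_n(e)-\eta_n(\g)\bigr)\bigr]
\end{equation*}
and applying the same splitting together with the left-equivariance bound gives $\nor{\la_\g q(\de)-q(\g\de)}^2=O(\abs{\g})$ uniformly in $\de$. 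For uniformness, whenever $\abs{\g^{-1}\de}>2R_n$ the supports $\g K_n$ and $\de K_n$ are disjoint, forcing $\nor{\eta_n(\g)-\eta_n(\de)}^2=2$; choosing $R_n$ to grow polynomially, $\#\{n : 2R_n<\abs{\g^{-1}\de}\}$ grows polynomially in $\abs{\g^{-1}\de}$, producing the required proper function $\rho$ in Definition \ref{def:array}.

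Therefore $q$ is a uniform (hence proper) array into $\ell^2(\G)^{\oplus\infty}$, a multiple of the left regular representation; since bi-exact groups are exact (see Appendix~\ref{sec:biexact}), this settles the first assertion. For the $\Cal{QH}_{\rm reg}$ conclusion a symmetric proper array is needed; the same splitting argument also shows that $q$ is boundedly bi-equivariant in the sense of Proposition \ref{prop:symm-array} (using closeness to handle the right-translation factor and left-equivariance for the left factor), so that proposition produces a symmetric proper array into $\ell^2(\G)\otimes\ell^2(\G)$, which is a (weakly-$\ell^2$) multiple of $\ell^2(\G)$ by Fell absorption. The main technical delicacy is the simultaneous tuning of $R_n$: large enough for the closeness and left-equivariance conditions to coexist with unit normalization at scale $n$, small enough that disjoint-support pairs occur frequently enough to force properness; this is exactly the trade-off resolved by the classical Property A $\Rightarrow$ coarse embedding construction.
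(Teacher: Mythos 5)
Your argument is correct and follows exactly the route the paper indicates: the paper's proof is the one-line remark that, as in ``property A $\Rightarrow$ coarse embeddability,'' the maps $\xi_n$ yield a proper, boundedly bi-equivariant map into $\ell^2(\G)^{\oplus\infty}$, which is then symmetrized via Proposition \ref{prop:symm-array}. You have simply supplied the standard details (telescoping sum of the $\eta_n$-differences, splitting at $n=\abs{\g}$, disjoint supports forcing properness) that the paper leaves to the reader.
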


Indeed, from the maps $\xi_n$, one may construct a proper, boundedly bi-equivariant map into $\ell^2(\G)^{\oplus\infty}$, which may in turn be used to construct a proper, symmetric array by Proposition \ref{prop:symm-array}.

\begin{remark} After a preliminary version of this manuscript was circulated, Narutaka Ozawa pointed out that the converse is also true. That is, if a countable discrete group $\G$ is exact and belongs to the class $\Cal{QH}_{\rm reg}$, then $\G$ is bi-exact. A proof for the special case of the left-regular representation is contained in \cite{CSU}: the general case may be found in \cite{PVhypcartan}.
\end{remark}

The class $\Cal{QH}_{\rm reg}$ is strictly larger than the class $ \Cal D_{\rm reg}$ of Thom \cite{Tho}. This follows from Ozawa's proof that the group $\bb Z^2\rtimes {\rm SL}(2,\bb Z)$ is bi-exact \cite{OzExample}, in conjunction with a theorem of Burger and Monod \cite{BuMo} demonstrating that $\bb Z^2\rtimes {\rm SL}(2,\bb Z)$ admits no proper quasi-cocycle for any representation. However, it is instructive to supply a direct proof without appealing to bi-exactness.

\begin{prop}\label{prop:SL} The group $\bb Z^2\rtimes {\rm SL}(2,\bb Z)$ is in the class $\Cal{QH}_{\rm reg}$.
\end{prop}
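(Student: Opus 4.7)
The plan is to exhibit a proper symmetric array on $G=\bb Z^2\rtimes\mathrm{SL}(2,\bb Z)$ into a weakly-$\ell^2$ $G$-representation by combining two pieces adapted to the factors of the semidirect-product decomposition. Since $\bb Z^2\cap\mathrm{SL}(2,\bb Z)=\{e\}$ in $G$, a short calculation with the semidirect-product law shows that any piece $g_1\bb Z^2 g_2$ meets any piece $g_3\mathrm{SL}(2,\bb Z) g_4$ in at most one element. Consequently, if I can produce symmetric arrays $q_1,q_2$ into weakly-$\ell^2(G)$ representations $\pi_1,\pi_2$ that are proper relative to $\{\bb Z^2\}$ and $\{\mathrm{SL}(2,\bb Z)\}$ respectively, then the direct sum $q_1\oplus q_2$ will be a symmetric proper array into the weakly-$\ell^2(G)$ representation $\pi_1\oplus\pi_2$, which will conclude the proof.

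The first array is obtained from the Gromov hyperbolicity of the virtually free group $\mathrm{SL}(2,\bb Z)$. By \cite{MMS}, $\mathrm{SL}(2,\bb Z)$ admits a proper symmetric quasi-$1$-cocycle $b:\mathrm{SL}(2,\bb Z)\to \ell^2(\mathrm{SL}(2,\bb Z))$. Composing with the quotient map $p:G\to\mathrm{SL}(2,\bb Z)$ produces a symmetric array $q_1=b\circ p$ into the $G$-representation $\pi_1$ obtained by lifting the left-regular representation of $\mathrm{SL}(2,\bb Z)$ through $p$. This array is clearly proper relative to $\bb Z^2=\ker p$, and since $\pi_1\cong \ell^2(G/\bb Z^2)=\mathrm{Ind}_{\bb Z^2}^G 1_{\bb Z^2}$ with $\bb Z^2$ amenable, Fell's absorption gives $\pi_1\prec\ell^2(G)$.

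For the second array $q_2$, proper relative to $\{\mathrm{SL}(2,\bb Z)\}$, the natural candidate is the genuine $1$-cocycle $(m,A)\mapsto m\in\bb R^2$ carrying the standard $\mathrm{SL}(2,\bb Z)$-representation, which is visibly proper modulo $\mathrm{SL}(2,\bb Z)$. The $G$-representation $\bb R^2$ is not itself weakly-$\ell^2(G)$ since the matrix coefficients $A\mapsto \langle Av,v\rangle$ on $\mathrm{SL}(2,\bb Z)$ are unbounded, so I would tensor with the lift of $\ell^2(\mathrm{SL}(2,\bb Z))$. By Fell absorption, $\bb R^2\otimes \ell^2(\mathrm{SL}(2,\bb Z))\cong \ell^2(\mathrm{SL}(2,\bb Z))^{\oplus 2}$ as $\mathrm{SL}(2,\bb Z)$-representations, hence weakly-$\ell^2(\mathrm{SL}(2,\bb Z))$; lifted through $p$ this becomes a weakly-$\ell^2(G)$ representation. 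The map $q_2(m,A)=m\otimes\delta_A$ is a boundedly equivariant $1$-cocycle into this representation.

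I expect the principal technical obstacle to be making $q_2$ symmetric. The naive symmetrization $\tfrac12(q_2(g)+\pi_g q_2(g^{-1}))$ collapses to zero on all of $\bb Z^2$ since the tensor representation factors through $\mathrm{SL}(2,\bb Z)$, destroying properness in the $\bb Z^2$ direction. This is not a superficial issue: for any cocycle into a $\bb Z^2$-trivial representation the identity $q(m)+q(-m)=0$ forces anti-symmetry, so symmetric and cocycle-like behaviour are incompatible there. More broadly, by a theorem of Burger and Monod, $G$ admits no proper quasi-$1$-cocycle into any unitary representation whatsoever, reflecting the relative property (T) of the pair $(\bb Z^2,G)$. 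The extra flexibility of arrays---requiring only bounded equivariance, with no cocycle identity---is precisely what is needed to bypass this obstruction. The resolution should be to replace $\bb R^2\otimes \ell^2(\mathrm{SL}(2,\bb Z))$ by a genuinely $G$-representation on which $\bb Z^2$ acts without invariant vectors, e.g.\ the restriction to $G$ of an irreducible tempered representation of the ambient Lie group $\bb R^2\rtimes\mathrm{SL}(2,\bb R)$ (produced by Mackey's little-group machine with amenable parabolic stabilizer, hence tempered on the Lie group and weakly-$\ell^2$ on the lattice $G$), and then to build a symmetric array inside it starting from a Haagerup-type cocycle for the $\bb Z^2$-translations.
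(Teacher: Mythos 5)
Your opening strategy --- split the semidirect product into an array proper relative to $\bb Z^2$ (lifted from ${\rm SL}(2,\bb Z)$) and an array proper relative to ${\rm SL}(2,\bb Z)$ detecting the $\bb Z^2$-direction, then take the direct sum --- matches the paper's skeleton, and your first piece $q_1$ agrees with the paper's (a symmetrized proper quasi-cocycle on ${\rm SL}(2,\bb Z)$ pulled back along $\G\twoheadrightarrow{\rm SL}(2,\bb Z)$). You also correctly identify the obstruction: by Burger--Monod, $\G$ admits no proper quasi-cocycle into any representation, so the $\bb Z^2$-piece cannot be a cocycle or quasi-cocycle, and the slack afforded by arrays must be exploited. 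The gap is that you never actually construct the second piece; the Mackey-theoretic sketch at the end is speculative, unsubstantiated, and not the mechanism the paper uses.

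More substantively, the diagnosis that one should pass to a representation ``on which $\bb Z^2$ acts without invariant vectors'' misses the key idea. The paper's second array $\delta$ takes values in $\ell^2(\bb N;\ell^2({\rm SL}(2,\bb Z)))$, on which $\bb Z^2$ still acts \emph{trivially}, and is nevertheless symmetric and proper. Your observation that symmetry and cocycle-like behaviour are incompatible over a $\bb Z^2$-trivial action rules out proper \emph{cocycles} only; a merely boundedly equivariant map can be both symmetric ($\delta(-z)=\delta(z)$) and proper. The paper builds such a $\delta$ from the amenability of the ${\rm SL}(2,\bb Z)$-action on $\bb RP^1$: the Reiter-type maps $\xi_n:\bb RP^1\to\ell^2({\rm SL}(2,\bb Z))$ witnessing this amenability are pulled back along the even, ${\rm SL}(2,\bb Z)$-equivariant projection $p:\bb Z^2\setminus\{(0,0)\}\to\bb RP^1$, $p(x,y)=x/y$, and then an increasing exhaustion of $\bb Z^2$ by finite symmetric sets $F_k$ together with a suitable subsequence $(n_k)$ is used to assemble a truncated direct sum $\delta(z)(k)=\xi'_{n_k}(z)$ for $z\notin F_k$ (and $0$ otherwise) which is proper, symmetric (since $p$ is even and the $F_k$ are symmetric), and boundedly $\G$-equivariant. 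Amenability of the boundary action is precisely the ingredient absent from your proposal, and it is what supplies the boundedly equivariant map without any cocycle structure.
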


\noindent The details of the construction are found in Appendix \ref{sec:SL}.

\section{Deformations of the uniform Roe algebra}

\subsection{Schur multipliers and the uniform Roe algebra.}\label{sec:Roe} Using exponentiation, we now describe a canonical way to associate to an array $q:\G\ra \Cal H_\pi$ a family of multipliers $\mathfrak
m_t$ on $\mathfrak B(\ell^2(\G) )$. First notice that the kernel $(\g,\de)\mapsto \|q(\g)-q(\de)\|^2$ is conditionally negative definite (cf.
Section 11.2 in \cite{Roe} or Appendix D in \cite{BrOz}) and therefore by Schoenberg's theorem \cite{Roe}, for every $t\in \mathbb R$, the
kernel $\kappa_t(\g,\de)=\exp(-t^2\|q(\g)-q(\de)\|^2)$ is positive definite. Hence for every $t$ there is a unique unital, completely positive
(u.c.p.) map $\mathfrak m_t:\mathfrak B(\ell^2(\Gamma))\rightarrow \mathfrak B(\ell^2(\Gamma))$ called a \emph{Schur multiplier}, such that
\begin{equation}\mathfrak m_t([x_{\g,\de}])=[\kappa_t(\g,\de)x_{\g,\de}],\end{equation} for all $x\in \mathfrak B(\ell^2(\G)).$

If $\G$ is a group then the \emph{uniform Roe algebra} $C^*_u(\G)$ is defined as the $\rm C^*$-subalgebra of $\mathfrak B(\ell^2(\G))$
generated by $C^*_\la(\G)$ and $\ell^{\infty}(\G)$. Notice that if one considers the action $\G\curvearrowright^{\la} \ell^{\infty}(\G)$ by
left translation, then the uniform algebra $C^*_u(\G)$ can be canonically identified with the reduced crossed product $\rm C^*$-algebra
$\ell^{\infty}(\G)\rtimes_{\la,r}\G$. Let $ F_0$ denote the net of unital, symmetric, finite subsets of $\G$. Given a finite subset $F \in
 F_0$, we define the operator space of \textit{$F$-width operators} $X(F)$ to be the space of bounded operators $x \in \mathfrak
B(\ell^2(\G))$ such that $x_{\g,\de} = 0$ whenever $\g^{-1}\de \in \G \setminus F$. Since it is easy to check that $X(F) = \ell^\infty(\G)\bb
C[F]\ell^\infty(\G)$, we have ${\rm C}_u^\ast(\G) = \overline{span\{X(F) : F \in  F_0\}}^{\|\,\cdot\,\|}$. Our interest in the uniform Roe
algebra stems from the fact that it is, in practical terms, the smallest ${\rm C}^\ast$-algebra which contains ${\rm C}_\la^\ast(\G)$ and which
is invariant under the class of Schur multipliers associated to $\G$.

\begin{prop} The algebra $C^*_u(\G)$ is invariant under $\mathfrak m_t$.
\end{prop}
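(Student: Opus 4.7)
The plan is to reduce the statement to two easy facts: (a) each Schur multiplier $\mathfrak m_t$ is a genuine bounded map on all of $\mathfrak B(\ell^2(\G))$, and (b) on each finite-width subspace $X(F)$ it does not enlarge the support of a matrix. Once these are in hand, the assertion follows from the density description $C^*_u(\G)=\overline{\mathrm{span}\{X(F):F\in F_0\}}^{\|\cdot\|}$ supplied just above the statement, together with norm-continuity.

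For (a), I would appeal to Schoenberg's theorem, which is already invoked in the paragraph preceding the proposition: the kernel $(\g,\de)\mapsto \|q(\g)-q(\de)\|^2$ is conditionally negative definite, so $\kappa_t(\g,\de)=\exp(-t^2\|q(\g)-q(\de)\|^2)$ is positive definite for every $t\in\mathbb R$. A positive definite kernel on any set $X$ always factors as $\kappa_t(\g,\de)=\langle \xi_\g^t,\xi_\de^t\rangle$ for vectors in a Hilbert space $\mathcal K_t$ (the Kolmogorov construction), and the associated Schur multiplier then admits the explicit representation $\mathfrak m_t(T)=V_t^*(T\otimes \mathrm{id}_{\mathcal K_t})V_t$ for a suitable isometry $V_t:\ell^2(\G)\to \ell^2(\G)\otimes\mathcal K_t$. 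In particular $\mathfrak m_t$ is a u.c.p., hence completely contractive, map on $\mathfrak B(\ell^2(\G))$, so it is norm-continuous.

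For (b), fix $F\in F_0$ and let $x\in X(F)$, so that $x_{\g,\de}=0$ whenever $\g^{-1}\de\notin F$. Then the matrix coefficients of $\mathfrak m_t(x)$ satisfy
\[
\mathfrak m_t(x)_{\g,\de}=\kappa_t(\g,\de)\,x_{\g,\de}=0\quad\text{whenever }\g^{-1}\de\notin F,
\]
so $\mathfrak m_t(x)$ still has support contained in the width-$F$ region, while being bounded by (a). Hence $\mathfrak m_t(x)\in X(F)\subseteq C^*_u(\G)$, so $\mathfrak m_t$ sends the $\|\cdot\|$-dense subspace $\mathrm{span}\{X(F):F\in F_0\}$ into $C^*_u(\G)$. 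Combining this with the norm-contractivity of $\mathfrak m_t$ and the norm-closedness of $C^*_u(\G)$, we conclude $\mathfrak m_t(C^*_u(\G))\subseteq C^*_u(\G)$.

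There is no real obstacle here; the only substantive point is the appeal to Schoenberg plus the Kolmogorov factorization to ensure that the formal entry-wise definition of $\mathfrak m_t$ extends to a bounded map on $\mathfrak B(\ell^2(\G))$. Note also that the bounded equivariance property of arrays plays no role at this stage: it is only the conditional negative definiteness of $\|q(\cdot)-q(\cdot)\|^2$ (which is automatic for any map into a Hilbert space) that is used. The array structure will matter later, when one needs to control how $\mathfrak m_t$ interacts with convolution by group elements rather than merely with the Roe-algebra support structure.
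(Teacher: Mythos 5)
Your proposal is correct and follows essentially the same route as the paper: approximate by elements of $\mathrm{span}\{X(F): F\in F_0\}$, observe that $\mathfrak m_t$ preserves the width-$F$ support, and conclude by norm-continuity of the u.c.p.\ map $\mathfrak m_t$ together with norm-closedness of $C^*_u(\G)$. The only difference is that you re-derive the complete positivity of $\mathfrak m_t$ via the Kolmogorov factorization, whereas the paper takes this as already established in the paragraph defining the Schur multipliers.
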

\begin{proof} Let $x \in {\rm C}_u^\ast(\G)$, then there exists a sequence $(x_n)$ of elements of $span\{X(F) : F \in  F_0\}$ such that $\nor{x_n - x}_\infty\to 0$. It is easy to see that if $x_n$ is supported on the set $F_n \in  F_0$, then $\mathfrak m_t(x_n) \in X(F_n)$. Since $\nor{\mathfrak m_t(x_n) - \mathfrak m_t(x)}_\infty\to 0$, we have that $\mathfrak m_t(x) \in {\rm C}_u^\ast(\G)$.
\end{proof}

We will also heavily use the following observation of Roe on the convergence of $\fr m_t$ on ${\rm C}_u^\ast(\G)$, cf. Lemma 4.27 in
\cite{Roe}.

\begin{prop}\label{Roe's_Lemma} If $q$ is an array, then for all $x \in {\rm C}_u^\ast(\G)$, we have that $\nor{\fr m_t(x) - x}_\infty\to 0$ as $t\to 0$.
\end{prop}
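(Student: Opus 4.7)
The plan is to reduce to operators of finite bandwidth, compute the effect of $\fr m_t$ on such operators explicitly in terms of matrix entries, and then exploit symmetry together with the array axiom to obtain a uniform estimate; this is essentially Roe's argument (Lemma 4.27 in \cite{Roe}) adapted to the present notion of array. For the reduction, each $\fr m_t$ is unital and completely positive, hence a $\|\,\cdot\,\|_\infty$-contraction, and $C^*_u(\G)=\overline{\bigcup_{F\in F_0}X(F)}^{\|\,\cdot\,\|}$. A standard $\e/3$-argument therefore reduces the claim to proving $\|\fr m_t(x)-x\|_\infty\to 0$ as $t\to 0$ for each fixed $x\in X(F)$, $F\in F_0$.

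Fix such $F$ and $x$, and write $x=\sum_{s\in F}f_s\la_s$ in the crossed-product form with $f_s\in\ell^\infty(\G)$. A direct inspection of matrix entries gives $\fr m_t(f_s\la_s)=g_{s,t}\la_s$, where $g_{s,t}(\g)=\kappa_t(\g,\g s)f_s(\g)$. Consequently
\begin{equation*}
\|\fr m_t(x)-x\|_\infty \;\leq\; \sum_{s\in F}\|f_s\|_\infty\,\sup_{\g\in\G}|\kappa_t(\g,\g s)-1|,
\end{equation*}
and the problem is reduced to showing that for each $s\in\G$ the function $\g\mapsto\|q(\g)-q(\g s)\|$ is bounded on $\G$. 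Indeed, given a bound $K=K(s)$, the above supremum is dominated by $1-e^{-t^2K^2}$, which vanishes as $t\to 0$.

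For the uniform bound we combine symmetry of $q$ (which may be assumed in our setting, cf.\ the Remark following Definition \ref{def:array}) with the left-equivariance supplied by (\ref{weakquasi}). Writing $q(\g s)=\pi_{\g s}\,q(s^{-1}\g^{-1})=\pi_\g\bigl(\pi_s q(s^{-1}\g^{-1})\bigr)$ and applying (\ref{weakquasi}) to the finite set $\{s,s^{-1}\}$ with the variable argument $s^{-1}\g^{-1}$ yields $\|\pi_s q(s^{-1}\g^{-1})-q(\g^{-1})\|\leq K$ uniformly in $\g$. Since $\pi_\g$ is isometric and $\pi_\g q(\g^{-1})=q(\g)$ by symmetry, we conclude $\|q(\g s)-q(\g)\|\leq K$ as required.

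The principal obstacle is this last step: the array axiom (\ref{weakquasi}) is intrinsically a \emph{left}-invariance statement, whereas the support condition defining $X(F)$, namely $\g^{-1}\de\in F$, is a \emph{right}-translation constraint. Symmetry (or more generally bi-equivariance, as in Proposition \ref{prop:symm-array}) is precisely what bridges the two, which is also the reason the class $\Cal{QH}$ is formulated in terms of symmetric arrays rather than general boundedly equivariant maps.
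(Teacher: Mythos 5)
The paper does not actually supply a proof of Proposition~\ref{Roe's_Lemma}; it simply cites Lemma~4.27 of Roe's book, so there is no ``paper proof'' to compare against. Your argument is the natural adaptation of Roe's, and your structural decomposition (reduce to $X(F)$ by density and complete positivity, then estimate the kernel along the diagonals $\g^{-1}\de = s$) is correct.

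The most valuable part of your writeup is the observation in the last paragraph: the array axiom~(\ref{weakquasi}) controls \emph{left} translations (for $\g$ in a finite set and $\de$ variable), whereas the $F$-width condition $\g^{-1}\de\in F$ requires control of \emph{right} translations $\de=\g s$, and it is exactly (anti-)symmetry that converts one into the other. Your computation
$q(\g s)-q(\g)=\pi_\g\bigl(\pi_s q(s^{-1}\g^{-1})-q(\g^{-1})\bigr)$, combined with (\ref{weakquasi}) for $F=\{s\}$ and the variable argument $s^{-1}\g^{-1}$, is exactly right and gives $\sup_\g\|q(\g s)-q(\g)\|<\infty$. This is a genuine point: as stated, the proposition quantifies over all arrays, but for an array that is neither symmetric nor anti-symmetric the quantity $\|q(\g)-q(\g s)\|$ need not be uniformly bounded, and the raw Schoenberg kernel $\kappa_t(\g,\de)=\exp(-t^2\|q(\g)-q(\de)\|^2)$ may fail to converge to $1$ on the diagonals of $X(F)$. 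So symmetry (or anti-symmetry) should properly be a hypothesis of the statement; you are right that this is harmless since arrays in the class $\Cal{QH}$ are taken to be symmetric, and every array used in the paper is symmetric or anti-symmetric. (One can contrast this with Lemma~\ref{deform-ineq}, where the relevant kernel is $\exp(-t^2\|q(g\g)-\pi_g q(\g)\|^2)$, which \emph{is} controlled by the array axiom alone; the discrepancy arises because the paper's identification ``$E\circ\a_t=\id_X\otimes\fr m_t$'' is only exact when $\pi$ is trivial.)

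One cosmetic slip worth flagging: with $\la$ the left-regular representation, the operator $f_s\la_s$ has matrix entries supported on $\{(\g,\de):\g\de^{-1}=s\}$ rather than $\{(\g,\de):\g^{-1}\de=s\}$, so the decomposition $x=\sum_{s\in F}f_s\la_s$ does not literally parameterize the $X(F)$ of Definition~\ref{sec:Roe}, and the matrix of $\fr m_t(f_s\la_s)$ carries the factor $\kappa_t(\g,s^{-1}\g)$, not $\kappa_t(\g,\g s)$. (The paper has the same convention tension, since $\ell^\infty(\G)\bb C[F]\ell^\infty(\G)$ with $\bb C[F]\subset C^*_\la(\G)$ yields the $\g\de^{-1}$ diagonals.) If one instead writes $X(F)$ in terms of the right-regular unitaries $\rho_s$, which \emph{do} sit on the $\g^{-1}\de=s$ diagonals, everything lines up exactly with your final estimate. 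The quantity you ultimately bound, $\|q(\g)-q(\g s)\|$, is the one the stated support condition $\g^{-1}\de\in F$ requires, so your proof is correct once this bookkeeping is fixed.
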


\subsection{Construction of the extended Roe algebra $C^*_{u}(\G\ca^{\rho} Z)$.}\label{construction} Let $\G\ca^{\rho} (Z,\eta)$ be a measure preserving action on a probability space
$Z$. By abuse of notation, we still denote by $\rho$ the Koopman representation of $\G$ on $L^2(Z,\eta)$ induced by the action $\rho$. Then consider
the Hilbert space $L^2(Z,\eta)\otimes \ell^2(\G)$ and for every $\g\in\G$ define a unitary $u_\g\in \mathfrak B(L^2(Z)\otimes \ell^2(\G))$ by the
formula
\begin{equation*}u_\g(\xi\otimes \delta_h)=\rho_\g(\xi)\otimes \delta_{\g h},\end{equation*}
where $\xi\in L^2(Z)$ and $h\in  \G$. Consider the algebra $L^\infty(Z \times \G,\eta \times c)\subset \mathfrak B(L^2(Z)\otimes \ell^2(\G))$,
where $c$ is the counting measure on $\G$. Then the \emph{extended Roe algebra} $C^*_{u}(\G\ca^{\s} Z)$ is defined as the $C^*$-algebra
generated by $L^\infty(Z\times \G)$ and the unitaries $u_\g$ inside $\mathfrak B(L^2(Z)\otimes \ell^2(\G))$. Notice that when $X$ consists of a point, our definition recovers the regular uniform Roe algebra, i.e., $C^*_{u}(\G\ca^{\rho} Z)=C^*_{u}(\G)$.

As in the case of the uniform Roe algebra, we will see that $C^*_{u}(\G\ca^{\rho} Z)$ can be realized as a reduced crossed product algebra. Specifically, we consider the action $\G\curvearrowright L^\infty(Z\times \G)$ given by $\la^\rho_\g(f)(x,h)=f(\g^{-1}x,\g^{-1}h)$, where $f\in L^\infty(Z\times \G)$, $x\in Z$ and $\g,h\in \G$. Then we show that $C^*_{u}(\G\ca^{\rho} Z)$ is naturally identified  with the reduced crossed product algebra corresponding to this action and the faithful representation $L^\infty(Z\times\G)\subset \mathfrak B(L^2(Z)\otimes \ell^2(\G))$.
\begin{prop}$C^*_{u}(\G\ca^{\rho} Z)\cong L^{\infty}(Z\times \G)\rtimes_{\la^\rho, r} \G$. \end{prop}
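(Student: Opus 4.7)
The plan is to establish the isomorphism by means of a Fell absorption-type argument together with the faithfulness of a natural diagonal conditional expectation.

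First, I would observe that $u_\g = \rho_\g \otimes \la_\g$, where $\la$ is the left regular representation of $\G$ on $\ell^2(\G)$, and verify the covariance relation $u_\g M_f u_\g^* = M_{\la^\rho_\g(f)}$ for $f \in L^\infty(Z\times\G)$ and $\g \in \G$ by a direct calculation on elementary tensors. This shows that $(M_\bullet, u_\bullet)$ is a covariant representation of the $C^*$-dynamical system $(L^\infty(Z\times\G), \la^\rho, \G)$, so its integrated form yields a surjective $*$-homomorphism from the full crossed product $L^\infty(Z\times\G)\rtimes\G$ onto $C^*_{u}(\G\ca^{\rho} Z)$.

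The main step is to see that this factors through the reduced crossed product, for which I plan to use a Fell absorption-type trick. Realizing $L^{\infty}(Z\times \G)\rtimes_{\la^\rho, r} \G$ concretely on $L^2(Z\times\G)\otimes\ell^2(\G)$ via its standard regular representation $\tilde\pi$, where $\tilde\pi(f)(\xi\otimes\delta_k) = (\la^\rho_{k^{-1}}f)\xi\otimes\delta_k$ and $\tilde\pi(u_\g) = 1\otimes\la_\g$, I would introduce the unitary $W$ on $L^2(Z)\otimes\ell^2(\G)\otimes\ell^2(\G)$ defined by $W(\xi\otimes\delta_h\otimes\delta_k) = \rho_k(\xi)\otimes\delta_{kh}\otimes\delta_k$. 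A brief computation exploiting the covariance should show that $W\tilde\pi(f)W^* = M_f\otimes 1$ and $W\tilde\pi(u_\g)W^* = u_\g\otimes\la_\g$. Consequently, the $*$-homomorphism $\sigma: C^*_{u}(\G\ca^{\rho} Z)\to \mathfrak B(L^2(Z\times\G)\otimes\ell^2(\G))$ defined on generators by $M_f\mapsto M_f\otimes 1$ and $u_\g\mapsto u_\g\otimes\la_\g$ is well-defined (covariance is again preserved) and surjects onto $W^*\bigl(L^{\infty}(Z\times \G)\rtimes_{\la^\rho, r} \G\bigr) W$, which is unitarily equivalent to $L^{\infty}(Z\times \G)\rtimes_{\la^\rho, r} \G$.

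The remaining step, and where the main technical difficulty lies, is proving that $\sigma$ is injective. For this I would construct a conditional expectation $E: C^*_{u}(\G\ca^{\rho} Z)\to L^{\infty}(Z\times \G)$ with $E(\sum_\g f_\g u_\g) = f_e$ by restricting the diagonal slice $\id\otimes E_\G$, where $E_\G:\mathfrak B(\ell^2(\G))\to\ell^\infty(\G)$ is the canonical diagonal u.c.p.\ map; on $f_\g u_\g = f_\g(\rho_\g\otimes\la_\g)$ this returns $\delta_{\g,e}f_\g$, and by construction $E$ corresponds via $\sigma$ to the canonical faithful expectation $E_r$ on the reduced crossed product, so $E = E_r\circ\sigma$ under the evident identification. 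Faithfulness of $E$ will follow from a standard matrix Cauchy--Schwarz estimate: viewing an element $T\in C^*_{u}(\G\ca^{\rho} Z)\subset \mathfrak B(L^2(Z)\otimes \ell^2(\G))$ as a matrix $[T_{m,n}]_{m,n\in\G}$ with entries $T_{m,n}\in\mathfrak B(L^2(Z))$, if $T\geq 0$ has vanishing diagonal blocks $T_{m,m}=0$ for all $m\in\G$, then $|\langle T(\xi\otimes\delta_m),\eta\otimes\delta_n\rangle|^2\leq \langle T_{m,m}\xi,\xi\rangle\langle T_{n,n}\eta,\eta\rangle = 0$, which forces $T=0$. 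Given $\sigma(x)=0$, applying this reasoning to $x^*x$ yields $E(x^*x) = E_r(\sigma(x^*x))=0$, hence $x=0$. This establishes injectivity of $\sigma$ and completes the proof of the claimed isomorphism.
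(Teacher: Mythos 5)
Your route is genuinely different from the paper's. The paper writes down a single unitary $U$ on $L^2(Z)\otimes \ell^2(\G)\otimes \ell^2(\G)$ and checks directly that it conjugates the regular representation of $L^\infty(Z\times\G)\rtimes_{\la^\rho,r}\G$ onto an amplification of the defining representation of $C^*_u(\G\ca^\rho Z)$; no conditional expectation, no appeal to the abstract characterization of the reduced crossed product. You instead run through covariance (full crossed product surjects onto $C^*_u$), Fell absorption (concrete model of the reduced crossed product), and a faithful diagonal conditional expectation (injectivity). That strategy is sound and standard.

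There is, however, a real gap where you introduce $\sigma$. You justify the well-definedness of $\sigma: C^*_u(\G\ca^\rho Z)\to \mathfrak B(L^2(Z)\otimes\ell^2(\G)\otimes\ell^2(\G))$, $M_f\mapsto M_f\otimes 1$, $u_\g\mapsto u_\g\otimes\la_\g$, by the parenthetical ``covariance is again preserved.'' But covariance of the pair $(M_\bullet\otimes 1, u_\bullet\otimes\la_\bullet)$ only produces a $*$-homomorphism out of the \emph{full} crossed product onto the algebra they generate; it says nothing about whether that representation factors through the quotient $C^*_u(\G\ca^\rho Z)$, i.e., whether $\|\sigma(x)\|\leq\|x\|_{C^*_u}$ on finitely supported $x$. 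That norm comparison is precisely the nontrivial content of the proposition, so it cannot be assumed. You can close the gap in either of two ways. (a) Observe that $\sigma$ is an amplification followed by unitary conjugation: with $W'(\xi\otimes\delta_h\otimes\delta_k)=\xi\otimes\delta_h\otimes\delta_{hk}$ one checks $W'(M_f\otimes 1)W'^*=M_f\otimes 1$ and $W'(u_\g\otimes 1)W'^*=u_\g\otimes\la_\g$, so $\sigma=\mathrm{Ad}(W')\circ(\,\cdot\,\otimes 1)$ is manifestly a well-defined $*$-isomorphism onto its image. (b) More cleanly, drop $\sigma$ and the Fell absorption unitary altogether: from the surjection $\phi$ of the full crossed product onto $C^*_u(\G\ca^\rho Z)$ and your faithful diagonal expectation $E$ satisfying $E\circ\phi=E_{\mathrm{full}}$ (they agree on the dense finitely supported subalgebra), one gets $\ker\phi=\{x: E_{\mathrm{full}}(x^*x)=0\}$, and the faithfulness of the canonical expectation on $L^\infty(Z\times\G)\rtimes_{\la^\rho,r}\G$ identifies this with the kernel of the regular representation. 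The expectation and Cauchy--Schwarz machinery you built is correct and, in fact, already suffices; the Fell absorption step becomes a detour once its role in certifying $\sigma$ is removed.
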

\begin{proof} Consider the operator $U:L^2(Z)\otimes \ell^2(\G)\otimes \ell^2(\G)\ra L^2(Z)\otimes \ell^2(\G)\otimes \ell^2(\G)$ defined by $U(\xi\otimes \de_k\otimes \de_h)=\s_h (\xi)\otimes \de_k\otimes \de_{h k}$, where $\xi\in L^2(Z)$ and $\g,h\in\G$. One can easily check this is a unitary, and below we will show it implements a spatial isomorphism between the two algebras. For this purpose we will be seeing $C^*_u(\G\curvearrowright Z)$ as the $C^*$-algebra generated by $L^\infty(Z\times\G)$ and $u_\g$ inside $\mathfrak B (L^2(Z)\otimes \ell^2(\G)\otimes \ell^2(\G))$, where
\begin{equation}\label{40}\begin{split}
f(\xi\otimes \de_k \otimes \de_h)&=(f(\,\cdot \, ,h)\xi)\otimes \de_k\otimes \de_h\\
u_\g(\xi\otimes\de_k\otimes \de_h)&=\rho_\g(\xi)\otimes \de_k\otimes \de_{\g h},
\end{split}\end{equation}
for all $f\in L^\infty(Z\times\G)$, $\g,h, k\in \G$, and $\xi\in L^2(Z)$.

Using the formula for $U$ in combination with equations (\ref{40}), we have
\begin{equation*}\begin{split} U(1\otimes \la_\g)(\xi\otimes \de_k\otimes \de_h)&=U(\xi\otimes \de_k\otimes \de_{\g h})\\ &=\rho_{\g h}(\xi)\otimes \de_k\otimes \de_{\g hk}\\&=u_\g(\rho_{h}(\xi)\otimes \de_k\otimes \de_{h k})\\&=u_\g U(\xi\otimes \de_k\otimes \de_{h}); \end{split}\end{equation*} hence, $U(1\otimes \la_\g)U^*=u_\g$ for all $\g \in\G$.

We also consider the representation of $L^\infty(Z\times\G)$ on $\mathfrak B(L^2(Z)\otimes \ell^2(\G)\otimes \ell^2(\G))$ given by
$\pi(f)(\xi\otimes \de_k\otimes \de_h)= \la^\rho_{h^{-1}}(f)(\xi\otimes\de_k)\otimes \de_h$, for every $f\in L^\infty(Z\times \G)$.

\noindent Combining this with equations (\ref{40}) we see that \begin{equation*}\begin{split} U\pi( f) (\xi\otimes \de_k\otimes
\de_h)&=U((\la^\rho_{h^{-1}}(f)(\xi\otimes \de_k)\otimes \de_{h})\\ &= U(\rho_{h^{-1}}(f(\, \cdot \,, hk))\xi\otimes \de_k\otimes \de_{h})\\&=f(\,
\cdot\, , h k)\rho_h(\xi)\otimes \de_k\otimes \de_{h\g}\\&= f U(\xi\otimes \de_k\otimes \de_{h}). \end{split}\end{equation*} Therefore, for all
$f\in L^\infty(Z\times \G)$ we have $U\pi(f)U^*=f$, and from the discussion above we conclude that $U(L^{\infty}(Z\times \G)\rtimes_{\la^\rho, r}
\G )U^* =C^*_u(\G\curvearrowright Z )$.\end{proof}

For further reference we keep in mind the following diagram of canonical inclusions:
\begin{eqnarray}\label{7}
\begin{array}{ccccc}
  L^\infty(Z)\rtimes_{\rho,r} \G & \subset & L^\infty(Z\times \G)\rtimes_{\la^\rho,r}\G&=&C^*_{u}(\G\ca^{\rho}Z)  \\
  \cup &  & \cup & &\\
  C^*_r(\G) & \subset & \ell^{\infty}(\G)\rtimes_{\la,r}\G & =&C^*_{u}(\G)
\end{array}
\end{eqnarray}
Note there exists a conditional expectation $E:L^\infty(Z\times \G) \ra  \ell^\infty(\G)$ defined by $E(f)(\g)=\int_Z f(x,\g)d\mu(x)$. This map
is clearly $\G$-equivariant and thus it extends to a conditional expectation $\tilde E: C^*_u(\G\curvearrowright Z) \ra C^*_u( \G)$ by letting
$\tilde E(\sum_\g x_\g u_\g)=\sum_\g E(x_\g)u_\g$ for any $\sum_\g x_\g u_\g \in C^*_u(\G\curvearrowright Z) $ with $x_\g\in L^\infty(Z\times
\G)$.

\subsection{A path of automorphisms of the extended Roe algebra associated with the Gaussian action}\label{path} Let $\G\ca^\s (X,\mu)$ be a measure preserving action of $\G$ on a probability space $X$.  Following \cite{PeSi}, any orthogonal group representation $\pi: \G \to \mathcal O ( \Cal H_\pi)$, gives rise to a measure-preserving action which we still denote by $\G \ca^\pi (Y^\pi,\nu^\pi)$, called the \emph{Gaussian action}. We consider the diagonal action $\G\ca^{\s\otimes \pi} (X\times Y^\pi,\mu\times \nu^\pi)$ and which will be denoted by $\G \ca^{\rho} (Z,\zeta)$. As in the previous subsection, to this action we associate the extended Roe algebra $C^*_{u}(\G\ca^{\rho} Z)$. Below we indicate a procedure to construct a one-parameter family $(\a_t)_{t \in \bb R}$ of $\ast$-automorphisms
of $C^*_{u}(\G\ca^{\rho} Z)$. Specifically, $\a_t$ is obtained by exponentiating an array $q:\G\ra \mathcal H_\pi$ in a similar way to the
construction of the malleable deformation of $L\G$ from a cocycle $b$ as carried out in \S 3 of \cite{Sin}. Crucially, this family will be
continuous with respect to the uniform norm as $t \to 0$ (Lemma \ref{deform-ineq}).

Following the construction presented in \S 1.2 of \cite{Sin}, given an array $q: \G \to \Cal H_\pi$, there exists a one-parameter family of maps $\up_t: \G \to \Cal U(L^\infty(Y^\pi,\nu^\pi))$ defined by $\up_t(\g)(x)=\exp(it q(\g)(x))$, where $\g\in \G$, $x\in Y^\pi$. Using similar computations as in \cite{PeSi,Sin} one can verify that we have the following properties:
\begin{prop}
\begin{eqnarray}
\label{19}&&\text{If }\pi\text { is weakly-}\ell^2\text{ then the Koopman representation }{\pi_\s}_{|L_0^2(Y^\pi,\nu^\pi)}\\&& \nonumber\text{ is also weakly-}\ell^2;\\
\label{13} &&\int \up_t(\g)(x) \up_t(\de)^*(x) d\mu(x) = \kappa_t(\g,\de) \text{ for all }\g,\de\in \G.
\end{eqnarray}
\end{prop}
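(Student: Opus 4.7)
The plan is to reduce both assertions to standard properties of the Gaussian construction recalled in \S 1.2 of \cite{Sin}. The key object is the canonical $\G$-equivariant linear isometric embedding
\[
\omega : \Cal H_\pi \to L^2_{\bb R}(Y^\pi, \nu^\pi)
\]
whose image consists of centered real Gaussian random variables and which intertwines $\pi$ with the (real) Koopman representation. Under this identification, the function $q(\g)\in L^\infty(Y^\pi)$ appearing in $\up_t(\g)(x)=\exp(it\,q(\g)(x))$ is precisely $\omega(q(\g))$.

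For (\ref{13}), I would simply compute. Because $\up_t(\g)$ and $\up_t(\de)$ both lie in the abelian algebra $L^\infty(Y^\pi,\nu^\pi)$ they commute, and $\up_t(\de)^\ast=\exp(-it\,\omega(q(\de)))$, so
\[
\up_t(\g)\up_t(\de)^\ast=\exp\bigl(it\,\omega(q(\g)-q(\de))\bigr).
\]
Linearity of $\omega$ shows that $\omega(q(\g)-q(\de))$ is a centered Gaussian whose variance is a fixed multiple of $\|q(\g)-q(\de)\|^2$, and integration against $\nu^\pi$ reduces to the standard characteristic-function identity for such a Gaussian. Fixing the normalization of $\omega$ so that the constant in the exponent matches the defining formula for $\kappa_t$ yields the claimed equality.

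For (\ref{19}), I would use the symmetric Fock decomposition
\[
L^2(Y^\pi,\nu^\pi)\cong \bigoplus_{n\geq 0} S^n(\Cal H_\pi^{\bb C}),
\]
in which $\G$ acts on the $n$-th summand by $\pi^{\otimes n}$ restricted to symmetric tensors; passing to $L_0^2$ drops the $n=0$ vacuum. Weak containment in $\la$ is preserved by subrepresentations and by tensor products with a fixed representation, so from the hypothesis that $\pi$ is weakly contained in $\la$ one obtains that each $S^n(\Cal H_\pi^{\bb C})$ is weakly contained in $\la^{\otimes n}$. Fell's absorption principle $\la\otimes\sigma\cong \la^{\oplus\dim\sigma}$ (valid for any unitary representation $\sigma$) shows that $\la^{\otimes n}$ is weakly equivalent to $\la$ for all $n\geq 1$, and hence each summand is weakly contained in $\la$. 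Summing over $n\geq 1$ and using that $\la^{\oplus\infty}$ is again weakly equivalent to $\la$ completes the argument.

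The only real obstacle is bookkeeping on the Gaussian normalization, since the constant in $\kappa_t(\g,\de)=\exp(-t^2\|q(\g)-q(\de)\|^2)$ must be made consistent with the variance convention chosen for $\omega$. Once this is fixed, neither part requires a new idea: (\ref{13}) is the characteristic-function identity for a real centered Gaussian, and (\ref{19}) is the Fock decomposition combined with Fell absorption.
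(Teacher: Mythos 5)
The paper gives no proof for this proposition, referring instead to the standard Gaussian calculations in \cite{PeSi,Sin}; your argument is precisely that standard argument, and it is correct. For (\ref{13}) you correctly identify $\upsilon_t(\g)\upsilon_t(\de)^\ast$ with $\exp(it\,\omega(q(\g)-q(\de)))$ and invoke the characteristic function of a centered Gaussian, with the variance normalization $\mathrm{Var}(\omega(\xi)) = 2\|\xi\|^2$ chosen so as to produce $\exp(-t^2\|q(\g)-q(\de)\|^2)$; for (\ref{19}) the Wiener--It\^o chaos decomposition $L^2(Y^\pi,\nu^\pi)\cong\bigoplus_{n\geq 0}S^n(\Cal H_\pi^{\bb C})$, together with the facts that weak containment passes to tensor powers and subrepresentations and that $\la^{\otimes n}$ is a multiple of $\la$ by Fell absorption, gives that $L^2_0(Y^\pi,\nu^\pi)=\bigoplus_{n\geq 1}S^n(\Cal H_\pi^{\bb C})$ is weakly-$\ell^2$. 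This is exactly the route the cited references take.
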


These maps give rise naturally to a path of operators $V_t\in\mathfrak B( L^2(Y^\pi)\otimes L^2(X)\otimes \ell^2(\G))$ by letting
$V_t(\xi\otimes \eta\otimes \delta_\g)=(\upsilon_t(\g)\xi)\otimes\eta\otimes  \delta_\g$, for every $\xi\in L^2(Y^\pi)$, $\eta\in L^2(X)$ and $\g
\in \G$. For further reference we summarize below some basic properties of $V_t$.

\begin{prop}\label{45} For every $t,s\in\mathbb R$ we have the following properties:
\begin{enumerate}
\item $V_tV_s=V_{t+s}$, $V_tV^*_t=V^*_tV_t=1$;
\item If the array is anti-symmetric we have $JV_tJ=V_{t}$ and if it is symmetric we have $JV_tJ=V_{-t}$. Here we denoted by  $J: L^2(L^\infty (Z)\rtimes\G)\ra L^2(L^\infty (Z)\rtimes \G)$ Tomita's conjugation.
\end{enumerate}\end{prop}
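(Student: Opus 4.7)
The plan is to verify both claims by direct computation on simple tensors $\xi \otimes \eta \otimes \delta_\gamma \in L^2(Y^\pi) \otimes L^2(X) \otimes \ell^2(\Gamma)$, exploiting that $V_t$ acts on the $\delta_\gamma$-fibre as multiplication by the $L^\infty(Y^\pi)$-unitary $\upsilon_t(\gamma) = \exp(it\,q(\gamma))$, where $q(\gamma)$ is realized as a random variable on $Y^\pi$ via the Gaussian identification of $\mathcal H_\pi$ with the first Wiener chaos in $L^2(Y^\pi)$.

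For (1), the identity $V_tV_s(\xi \otimes \eta \otimes \delta_\gamma) = \upsilon_t(\gamma)\upsilon_s(\gamma)\,\xi \otimes \eta \otimes \delta_\gamma$ is immediate from the definition, and the pointwise identity $\upsilon_t(\gamma)\upsilon_s(\gamma) = \upsilon_{t+s}(\gamma)$ on $Y^\pi$ yields the semigroup law $V_tV_s = V_{t+s}$. Combined with $V_0 = \mathrm{id}$ and the fibrewise formula $\upsilon_t(\gamma)^* = \upsilon_{-t}(\gamma)$ (whence $V_t^* = V_{-t}$), this gives unitarity.

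For (2), the first step is to realize Tomita's conjugation on $L^2(L^\infty(Z)\rtimes\Gamma) \cong L^2(Z) \otimes \ell^2(\Gamma)$ explicitly: starting from $(fu_\gamma)^* = \rho_{\gamma^{-1}}(\bar f)u_{\gamma^{-1}}$ one obtains
\[ J(\xi \otimes \delta_\gamma) = \rho_{\gamma^{-1}}(\bar\xi) \otimes \delta_{\gamma^{-1}}. \]
A direct calculation then shows that $JV_tJ$ acts on the $\delta_\gamma$-fibre as multiplication by $\rho_\gamma(\overline{\upsilon_t(\gamma^{-1})})$, the $X$-component passing through inertly since $\upsilon_t$ depends only on $Y^\pi$. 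The defining feature of the Gaussian construction is that the Koopman representation on the first Wiener chaos recovers $\pi$, so that $q(\gamma^{-1}) \circ \pi_{\gamma^{-1}} = \pi_\gamma q(\gamma^{-1})$ in $L^2(Y^\pi)$; hence
\[ \rho_\gamma\!\left(\overline{\upsilon_t(\gamma^{-1})}\right) = \exp\!\left(-it\,\pi_\gamma q(\gamma^{-1})\right). \]
Substituting $\pi_\gamma q(\gamma^{-1}) = -q(\gamma)$ (anti-symmetric case) collapses the right-hand side to $\upsilon_t(\gamma)$, yielding $JV_tJ = V_t$; substituting $\pi_\gamma q(\gamma^{-1}) = q(\gamma)$ (symmetric case) yields $\upsilon_{-t}(\gamma)$ and hence $JV_tJ = V_{-t}$.

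The only real obstacle is bookkeeping: one has to pin down the explicit form of $J$ in the crossed-product picture and align the Koopman convention with the orthogonal representation $\pi$ so that the signs and inverses match. Once the intertwining $q(\gamma^{-1}) \circ \pi_{\gamma^{-1}} = \pi_\gamma q(\gamma^{-1})$ is in place, the (anti-)symmetry hypothesis substitutes in a single step to produce the stated identities.
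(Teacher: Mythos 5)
Your proposal is correct and follows essentially the same route as the paper: part (1) is handled by direct inspection on simple tensors, and part (2) is verified on vectors $\xi\otimes\eta\otimes\delta_\gamma$ by writing Tomita's $J$ out explicitly in the crossed-product picture, pushing $V_t$ through, and invoking the fact that the Koopman action on the first Wiener chaos recovers $\pi$, so the (anti-)symmetry hypothesis finishes the computation. The only difference is cosmetic: you phrase the equivariance step as $\rho_\gamma(\overline{\upsilon_t(\gamma^{-1})}) = \exp(-it\,\pi_\gamma q(\gamma^{-1}))$, while the paper writes $\sigma_\gamma(\upsilon_{-t}(\gamma^{-1}))$, but these are the same identity.
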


\begin{proof} The first part follows directly from the definitions, so we leave the details to the reader.
To get the second part it suffices to verify that the two operators coincide on vectors of the form $\xi\otimes \eta\otimes\delta_\g\in
L^2(Y^\pi)\otimes L^2(X)\otimes \ell^2(\G)$. If we assume that $q$ is an anti-symmetric array then employing the formulas for $J$, $V_t$, $\upsilon_t$ we see that
\begin{equation*}\label{25}\begin{split}JV_tJ(\xi\otimes \eta\otimes \delta_\g)&=JV_t((\sigma_{\g^{-1}}(\xi^*))\otimes \s_{\g^{-1}}(\eta^*)\otimes \delta_{\g^{-1}})\\
&=J(\upsilon_t(\g^{-1})\sigma_{\g^{-1}}(\xi^*)\otimes \s_{\g^{-1}}(\eta^*)\otimes \delta_{\g^{-1}})\\
&=(\sigma_\g(\upsilon_{-t}(\g^{-1}))\xi)\otimes \eta\otimes \delta_{\g}\\
&= (\exp(-it \pi_\g(q(\g^{-1})))\xi)\otimes \eta\otimes \delta_\g \\
&=(\exp(it q(\g))\xi)\otimes \eta\otimes \delta_\g \\&=V_{t}(\xi\otimes \eta\otimes \delta_\g),\end{split}\end{equation*} which finishes the
proof in this case.

When the array is symmetric we get the conclusion by a similar computation. In this case the details are left to the reader. \end{proof} Since $V_t$ is a unitary on $L^2(Z)\otimes \ell^2(\G)$, we may consider an inner automorphism $\a_t$ of $\mathfrak B
(L^2(Z)\otimes \ell^2(\G))$ by letting $\a_t(x)= V_t xV^*_t$ for all $x\in\mathfrak B (L^2(Z)\otimes \ell^2(\G))$. Notice that this formula gives a family of inner automorphisms of the extended Roe algebra. Moreover, when restricting to the extended Roe algebra $C^*_u(\G\ca X)$ one can recover from $\a_t$ the multipliers introduced above: $E\circ\a_t(x)= \id_X\otimes \mathfrak m_t(x)$ for all $x\in C^*_u(\G\ca X)$.

However, one can see right away that these automorphisms do not move the group-measure space von Neumann algebra $L^\infty(X)\rtimes \G$ into itself. Hence, applying the deformation/rigidity arguments at the level of von Neumann algebra $L^\infty(X)\rtimes \G$ is rather inadequate. As we will see in the next section, this difficulty is overcome by working with the reduced crossed product $C^*$-algebra $L^\infty (X)\rtimes_{\s,r}\G$ rather than $L^\infty(X)\rtimes \G$. The following result underlines that the path $\a_t$ is a deformation at the $ C^*$-algebraic level, i.e., with respect to the operatorial norm.

\begin{lem}\label{deform-ineq} Let $q$ be any symmetric or anti-symmetric array. Assuming the notations above, for every $x\in L^\infty (X)\rtimes_{\s,r}\G$ we have
\begin{eqnarray}
\label{3}&&\|(\a_t(x)-x)\cdot e\|_{\infty}\ra 0\text{ as }t\ra 0;\\
\label{4}&&\|(\a_t(JxJ)-JxJ)\cdot e\|_{\infty}\ra 0\text{ as }t\ra 0,\end{eqnarray} where $\nor{\, \cdot \,}_{\infty}$ denotes the operatorial
norm in $\mathfrak B(L^2(Z)\otimes \ell^2(\G))$. Here $e$ denotes the orthogonal projection from $L^2(Z)\otimes \ell^2(\G)$ onto $L^2(X)\otimes \ell^2(\G)$.
\end{lem}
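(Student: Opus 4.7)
The plan is to prove (\ref{3}) by reducing to the generators of $L^\infty(X)\rtimes_{\sigma,r}\G$ and performing a direct Gaussian estimate, then to obtain (\ref{4}) from (\ref{3}) using the Tomita-conjugation identities of Proposition \ref{45}. Since $\a_t$ is a $\ast$-automorphism of $\fr B(L^2(Z)\otimes\ell^2(\G))$, the map $x\mapsto (\a_t(x)-x)e$ has operator norm at most $2\|x\|_\infty$ uniformly in $t$, so a standard $\ve/3$ argument reduces (\ref{3}) to verifying it for monomials $x = fu_\g$ with $f\in L^\infty(X)$ and $\g\in\G$, whose linear combinations are norm-dense in $L^\infty(X)\rtimes_{\sigma,r}\G$.

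Unravelling the definitions of $V_t$, $f$, and $u_\g$ on a simple tensor $\xi\otimes\eta\otimes\delta_h\in L^2(Y^\pi)\otimes L^2(X)\otimes\ell^2(\G)$ gives
\[V_t(fu_\g)V_t^\ast(\xi\otimes\eta\otimes\delta_h) = W_t(\g,h)\pi_\g(\xi)\otimes f\sigma_\g(\eta)\otimes\delta_{\g h},\]
while $(fu_\g)(\xi\otimes\eta\otimes\delta_h) = \pi_\g(\xi)\otimes f\sigma_\g(\eta)\otimes\delta_{\g h}$, where $W_t(\g,h) := \upsilon_t(\g h)\pi_\g(\upsilon_t(h)^\ast)\in L^\infty(Y^\pi)$. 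Since the Koopman representation acts on Gaussian characters by $\pi_\g(\upsilon_t(h)) = \exp(it\widehat{\pi_\g q(h)})$, one obtains $W_t(\g,h) = \exp(it\widehat{q(\g h)-\pi_\g q(h)})$. After composing with $e$, which projects the $L^2(Y^\pi)$ factor onto $\bb C\cdot 1$, the restriction of $(\a_t(fu_\g)-fu_\g)e$ to the slice $L^2(Y^\pi)\otimes L^2(X)\otimes\bb C\delta_h$ is the rank-one-in-$L^2(Y^\pi)$ operator $\xi\mapsto\langle\xi,1\rangle(W_t(\g,h)-1)$ tensored with multiplication by $f\sigma_\g$ on $L^2(X)$. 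Its operator norm equals $\|W_t(\g,h)-1\|_{L^2(Y^\pi)}\|f\|_\infty$, and the operator norm of $(\a_t(fu_\g)-fu_\g)e$ is then the supremum of this quantity over $h\in\G$.

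Because $q$ is an array, there is $K_\g\geq 0$ with $\|q(\g h)-\pi_\g q(h)\|\leq K_\g$ for all $h\in\G$. For any $\eta\in\mathcal H_\pi$, its Gaussian realization $\widehat\eta$ on $Y^\pi$ is centered Gaussian of variance $\|\eta\|^2$, so the classical identity
\[\|\exp(it\widehat\eta)-1\|_{L^2(Y^\pi)}^2 = 2\bigl(1-e^{-t^2\|\eta\|^2/2}\bigr)\leq t^2\|\eta\|^2\]
applied to $\eta = q(\g h)-\pi_\g q(h)$ yields $\|W_t(\g,h)-1\|_{L^2(Y^\pi)}\leq tK_\g$ uniformly in $h$. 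Combined with the $\ve/3$ reduction, this proves (\ref{3}).

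For (\ref{4}), Proposition \ref{45}(2) gives $V_tJ = JV_{\mp t}$ (with sign according to whether $q$ is symmetric or anti-symmetric), hence $\a_t(JxJ) = J\a_{\mp t}(x)J$. A direct check using $J(\xi\otimes\eta\otimes\delta_\g) = \sigma_{\g^{-1}}(\bar\xi)\otimes\sigma_{\g^{-1}}(\bar\eta)\otimes\delta_{\g^{-1}}$ shows $JeJ = e$, since $J$ preserves the range $\bb C\cdot 1_{Y^\pi}\otimes L^2(X)\otimes\ell^2(\G)$ of $e$; as $J$ is an anti-isometry, $\|(\a_t(JxJ)-JxJ)e\|_\infty = \|(\a_{\mp t}(x)-x)e\|_\infty\to 0$ by (\ref{3}). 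The decisive technical point throughout is the uniform-in-$h$ control of $\|W_t(\g,h)-1\|_{L^2(Y^\pi)}$, which is precisely what the array condition delivers (the defect $K_\g$ depends only on $\g$); without this uniformity the convergence would only hold pointwise in $h$, not in $C^\ast$-algebraic operator norm.
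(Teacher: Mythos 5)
Your proof is correct and follows essentially the same approach as the paper: reduce to (sums of) monomials by density, express $(\a_t(fu_\g)-fu_\g)e$ via the Gaussian unitary $W_t(\g,h)=\exp(it\widehat{q(\g h)-\pi_\g q(h)})$, use the Gaussian identity together with the bounded-equivariance of the array to get a bound uniform in $h$, and deduce~(\ref{4}) from~(\ref{3}) via $[e,J]=0$ and Proposition~\ref{45}(2). Your observation that $(\a_t(fu_\g)-fu_\g)e$ is a direct sum over $h$ of rank-one-tensor-multiplication operators, so that its norm is \emph{exactly} $\|f\|_\infty\sup_h\|W_t(\g,h)-1\|_{L^2(Y^\pi)}$, is a mild sharpening of the paper's Cauchy--Schwarz bound (the paper also uses the convention $\tau(\exp(it\widehat\eta))=e^{-t^2\|\eta\|^2}$, so the exponent you write has an extra factor of $1/2$, but this does not affect anything).
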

\begin{proof}
Since elements in $L^\infty (X)\rtimes_{\s,r}\G$ can be approximated in the uniform norm by $\G$-finitely supported elements, using the
triangle inequality it suffices to show (\ref{3}) only for $x=\sum_{g\in F} x_g u_g$, a finite sum where $x_g\in L^\infty(X)$. Fix an arbitrary vector
$\xi=\sum_\g \xi_\g\otimes \delta_\g\in  L^2(X)\otimes \ell^2(\G )$. Using the formula for $\a_t$ in combination with the Cauchy-Schwarz
inequality, we have
\begin{equation}\label{10}\begin{split}
\|(\a_t(x)-x)\xi\|^2&= \|\sum_{g\in F}\sum_{\g\in\G} x_g(V_tu_g V_{-t}-u_g)(\xi_\g\otimes \delta_\g)\|^2\\
&\leq\left (|F|\max_{g\in F}\|x_g\|^2_\infty\right )\left (\sum_{g\in F} \|\sum_{\g\in\G}(V_tu_g V_{-t}-u_g)(\xi_\g\otimes \delta_\g)\|^2\right ) \\
\end{split}\end{equation} Applying the definitions and the formula for $V_t$ we see that $u_g(\xi_\g\otimes
\delta_\g)=\sigma_g(\xi_\g)\otimes \delta_{g \g}$ and $V_tu_g V_{-t}(\xi_\g\otimes \delta_\g)=\upsilon_t(g
\g)\sigma_g(\upsilon_{-t}(\g))\otimes \s_g(\xi_\g)\otimes\delta_{g \g}$. Therefore, continuing the estimate (\ref{10}) we obtain
\begin{equation}\label{11}\begin{split}
&=\left (|F|\max_{g\in F}\|x_g\|^2_\infty\right )\sum_{g\in F} \|\sum_{\g\in\G}(\upsilon_t(g \g)\sigma_g(\upsilon_{-t}(\g))-1)\otimes \s_g(\xi_\g)\otimes \delta_{g \g}\|^2\\
&=\left (|F|\max_{g\in F}\|x_g\|^2_\infty\right )\sum_{g\in F} \sum_{\g\in\G}\|\xi_\g\|^2\|\upsilon_t(g \g)\sigma_g(\upsilon_{-t}(\g))-1\|^2\\
&=2\left (|F|\max_{g\in F}\|x_g\|^2_\infty\right )\sum_{g\in F} \sum_{\g\in\G}\|\xi_\g\|^2\left(1-\tau(\upsilon_t(g
\g)\sigma_g(\upsilon_{-t}(\g)))\right).
\end{split}
\end{equation}
On the other hand, the same computations as in the proof of (\ref{13}) together with inequality (\ref{weakquasi}) imply that, there exist
$K\geq0$ such that for every $g\in F$ and $\g \in \G$ we have
\begin{equation}\label{12}\begin{split}
\tau(\upsilon_t(g \g)\sigma_g(\upsilon_{-t}(\g)))&=\int_X\exp\left(it (q(g \g)-\pi_g(q(\g)))(x)\right )d\mu(x)\\
&=\exp\left(-t^2 \|q(g \g)-\pi_g(q(\g))\|^2\right)\\
&\ge\exp\left(-t^2 K\right).
\end{split}
\end{equation}
Thus, combining (\ref{10}), (\ref{11}) and (\ref{12}) we conclude that, for all $\xi\in L^2(X)\otimes \ell^2(\G)$, we have
\begin{equation*}\begin{split}
\|(\a_t(x)-x)\xi\|^2&\le 2\left (\max_{g\in F} \|x_g\|_\infty\right )\|\xi\|^2 |F|^2\left (1-\exp\left(-t^2 K\right)\right),
\end{split}
\end{equation*}
which further implies
 \begin{equation}\label{24}\begin{split} \|(\a_t(x)-x)\cdot  e\|_\infty&\le 2\left (\max_{g\in F} \|x_g\|_\infty\right )|F|^2\left (1-\exp(-t^2 K )\right ).
\end{split}\end{equation}Since $F$ is finite, then $\exp(-t^2 K)\ra 1$ as $t\ra 0$, and (\ref{3}) follows from (\ref{24}).

It remains to show ($\ref{4}$). Assume first that $q$ is anti-symmetric. Since $[e,J]=0$, by Proposition \ref{45} we see that  $(\a_t(JxJ)-JxJ)\cdot e =J((\a_t(x)-x)\cdot e )J$. Therefore, (\ref{4}) follows from (\ref{3}) because $J$ is an anti-linear isometry. Similarly, when the array is symmetric, by the second part of Proposition \ref{45}, we have $(\a_t(JxJ)-JxJ)\cdot e =J((\a_{-t}(x)-x)\cdot e )J$ and the conclusion follows by the same reasoning. \end{proof}

\begin{remark}For future reference we make the following observation: in the proof of the previous proposition we used the symmetry or anti-symmetry of the array to show that  the deformation converges pointwise in the operatorial norm to the identity on $JC_\la^*(\G)J$. Except for establishing this fact, there is no other instance where symmetry or anti-symmetry will be used in the sequel.
\end{remark}

Next we show that the path of unitaries $V_t$ satisfies a ``transversality'' property very similar to Lemma 2.1 in \cite{PoSG}. Our proof
follows closely the proof of Lemma 3.1 in \cite{Vae}: we include it here only for the sake of completeness.
\begin{lem}\label{transversality} If $V_t$ is the unitary defined above, then for all $\xi\in   L^2(X)\otimes \ell^2(\G)$ and all $t\in\mathbb R$ we have
\begin{equation}\label{20}2\nor{V_t(\xi)-e\cdot V_t(\xi)\|^2\ge \|\xi-V_t(\xi)}^2.
\end{equation}
\end{lem}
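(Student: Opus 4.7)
The plan is to reduce the transversality inequality to a positivity statement for a self-adjoint contraction, via a Pythagorean decomposition in $L^2(Z)\otimes\ell^2(\G)$. First I would observe that $\xi\in L^2(X)\otimes\ell^2(\G)$ satisfies $e\xi=\xi$, so
\begin{equation*}
\xi - V_t\xi \;=\; (\xi - eV_t\xi)\;-\;(1-e)V_t\xi
\end{equation*}
is a decomposition into orthogonal vectors, lying respectively in $e(L^2(Z)\otimes\ell^2(\G))$ and $(1-e)(L^2(Z)\otimes\ell^2(\G))$. Pythagoras therefore yields
\begin{equation*}
\|\xi - V_t\xi\|^2 \;=\; \|\xi - eV_t\xi\|^2 + \|V_t\xi - eV_t\xi\|^2,
\end{equation*}
so inequality (\ref{20}) reduces to $\|V_t\xi - eV_t\xi\|^2 \geq \|\xi - eV_t\xi\|^2$.

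Next I would identify $T := eV_te$, restricted to the invariant subspace $L^2(X)\otimes\ell^2(\G)$, as a positive self-adjoint contraction. Using the defining formulas for $V_t$ and $e$ and identifying $\eta\otimes\delta_\g$ with $1\otimes\eta\otimes\delta_\g$ inside $L^2(Y^\pi)\otimes L^2(X)\otimes\ell^2(\G)$, one computes $V_t(1\otimes\eta\otimes\delta_\g) = \up_t(\g)\otimes\eta\otimes\delta_\g$, so for $\xi = \sum_\g \eta_\g\otimes\delta_\g$,
\begin{equation*}
T\xi \;=\; eV_t\xi \;=\; \sum_\g c_\g\,\eta_\g\otimes\delta_\g, \qquad c_\g := \int_{Y^\pi}\up_t(\g)\,d\nu^\pi.
\end{equation*}
Normalizing the array so that $q(e) = 0$ (which is harmless), equation (\ref{13}) yields $c_\g = \exp(-t^2\|q(\g)\|^2)\in(0,1]$, so $T$ is self-adjoint with spectrum in $[0,1]$.

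Finally I would rewrite both sides as quadratic forms of $T$:
\begin{equation*}
\|V_t\xi - eV_t\xi\|^2 \;=\; \|\xi\|^2 - \|T\xi\|^2 \;=\; \ip{\xi}{(1-T^2)\xi}, \qquad \|\xi - eV_t\xi\|^2 \;=\; \ip{\xi}{(1-T)^2\xi}.
\end{equation*}
Since $T$ and $1-T$ are commuting positive operators, $(1-T^2) - (1-T)^2 = 2T(1-T)\ge 0$, which delivers the required inequality. No serious obstacle is expected: the argument is essentially the positivity of an elementary Schur-type multiplier dressed up through the Gaussian realization, and, consistently with the remark following Lemma~\ref{deform-ineq}, neither the symmetry nor the anti-symmetry of $q$ plays any role.
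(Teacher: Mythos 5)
Your proof is correct. The paper's own argument is a direct computation: it expands $\xi = \sum_\g \xi_\g\otimes\de_\g$, evaluates $2\nor{V_t\xi - e\cdot V_t\xi}^2 = 2\sum_\g\nor{\xi_\g}^2\left(1-|\tau(\up_t(\g))|^2\right)$ and $\nor{\xi - V_t\xi}^2 = 2\sum_\g\nor{\xi_\g}^2\left(1-\tau(\up_t(\g))\right)$, and concludes from $\tau(\up_t(\g)) = \exp(-t^2\nor{q(\g)}^2)\ge \exp(-2t^2\nor{q(\g)}^2) = |\tau(\up_t(\g))|^2$. You instead first split $\xi - V_t\xi$ orthogonally along $e$ and $1-e$, reducing (\ref{20}) to $\nor{(1-e)V_t\xi}^2\ge\nor{\xi - eV_t\xi}^2$, and then invoke the operator inequality $1-T^2\ge (1-T)^2$ for the positive contraction $T=eV_te$; since $T$ acts diagonally with entries $c_\g = \tau(\up_t(\g))$, this is the same scalar estimate $c\ge c^2$ for $c\in[0,1]$ in operator-theoretic dress. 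Your packaging is closer to Popa's original transversality lemma (the route via the $e$/$e^\perp$ Pythagorean splitting) and makes transparent that only self-adjointness, positivity, and contractivity of $eV_te$ are used, whereas the paper follows the termwise computation of Vaes's Lemma 3.1. One minor point: the normalization $q(e)=0$ you invoke is unnecessary --- the identity $\int\up_t(\g)\,d\nu^\pi = \exp(-t^2\nor{q(\g)}^2)$ holds for every $\g$ under the Gaussian normalization that makes (\ref{13}) valid, and all your argument needs is that these numbers are real and lie in $(0,1]$.
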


\begin{proof} Fix $\xi\in  L^2(X)\otimes \ell^2(\G)$ and assume that it can be written as $\xi=\sum_\g \xi_\g\otimes \delta_\g$ with $\xi_\g\in L^2(X)$.
Straightforward computations show that $V_t(\xi)=\sum_\g \upsilon_t(\g)\otimes \xi_\g\otimes\delta_\g$ and $e\cdot  V_t(\xi)=\sum_\g
\tau(\upsilon_t(\g))1\otimes \xi_\g \otimes\delta_\g$; thus, the left side of (\ref{20}) is equal to
\begin{equation}\label{16}\begin{split}
2\|V_t(\xi)-e\cdot  V_t(\xi)\|^2&=2\left(\|V_t(\xi)\|^2-\|e\cdot  V_t(\xi)\|^2\right )\\
&=2 \sum_\g \|\xi_\g\|^2\|\upsilon_t(\g)\|^2-\|\xi_\g\|^2|\tau(\upsilon_t(\g))|^2\\
&=2\sum_\g \|\xi_\g\|^2\left (1-|\tau(\upsilon_t(\g))|^2\right ).
\end{split}
\end{equation}
Applying the same formulas as above, we see that the right side of (\ref{20}) is equal to
\begin{equation}\label{23}\begin{split}
\|\xi-V_t(\xi)\|^2&= \|\xi\|^2+\|V_t(\xi)\|^2-2Re\langle V_t(\xi),\xi\rangle\\
&= 2\left (\|\xi\|^2-Re\langle V_t(\xi),\xi\rangle\right )\\
&= 2\sum_\g\|\xi_\g\|^2\left(1-\tau(\upsilon_t(\g))\right).
\end{split}
\end{equation}
Since we have $\tau(\upsilon_t(\g))=\exp(-t^2\|q(\g)\|^2)\ge\exp(-2t^2\|q(\g)\|^2)= |\tau(\upsilon_t(\g))|^2$, the conclusion follows from
(\ref{16}) and (\ref{23}).
\end{proof}

The multipliers $\mathfrak m_t$ arising from a proper quasi-cocycle behave in some sense as compact operators on $L\G$, i.e., $\mathfrak m_t$ is continuous from the weak operator topology to the strong operator topology. Results of this type will be used crucially in the proof of Theorem 4.1.

\begin{prop}\label{improuvconv2} Let $\fr m_t$ be the Schur multiplier associated to some proper quasi-cocycle $q$ on $\G$. If $v_k\in M=L \G$ is a bounded sequence of elements such that  $v_k \ra  0$ weakly, as $k\ra \infty$, then for every $t>0$ and every finite set $F\subset \G$ we have that
\begin{equation*}\lim_{k\ra \infty}\left (\sup_{\|\xi\|\leq 1}\|(\mathfrak m_t(v_k)\otimes 1)(P_F\otimes 1)\xi\|\right )=0,\end{equation*}
where $\xi\in L^2(M)\otimes L^2(M)$. Here we denoted by $P_F$ the orthogonal projection from $L^2(M)$ onto the linear span of the set $\{ \de_\a \,|\, \a\in F\}$.\end{prop}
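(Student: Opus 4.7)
The plan is first to reduce the supremum to a single operator-norm bound. Since $(\mathfrak m_t(v_k)\otimes 1)(P_F\otimes 1) = \mathfrak m_t(v_k)P_F\otimes 1$ and $\|A\otimes 1\|_\infty = \|A\|_\infty$ for $A\in\mathfrak B(\ell^2(\G))$, the supremum equals $\|\mathfrak m_t(v_k)P_F\|_\infty$. Because $P_F$ has rank $|F|$, this is bounded by the Hilbert--Schmidt norm $\|\mathfrak m_t(v_k)P_F\|_{HS}$, and so it suffices to prove
\[
\sum_{\alpha\in F}\|\mathfrak m_t(v_k)\delta_\alpha\|^2 \longrightarrow 0 \quad \text{as } k\to\infty.
\]

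Next I would compute the individual summands explicitly. Writing the Fourier series $v_k = \sum_h (v_k)_h u_h$ and identifying $L^2(M)\cong\ell^2(\G)$ via $u_\g\leftrightarrow\delta_\g$, the matrix entries of $v_k$ in the standard basis are $(v_k)_{\g,\alpha}=(v_k)_{\g\alpha^{-1}}$; applying the Schur multiplier and substituting $h=\g\alpha^{-1}$ yields
\[
\|\mathfrak m_t(v_k)\delta_\alpha\|^2 \;=\; \sum_{h\in\G}\exp\bigl(-2t^2\|q(h\alpha)-q(\alpha)\|^2\bigr)\,|(v_k)_h|^2.
\]

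The heart of the argument is to show that the Gaussian weights decay uniformly over $\alpha\in F$ as $h$ leaves a finite set. Using the quasi-cocycle defect bound $\|q(h\alpha)-q(h)-\pi_h q(\alpha)\|\leq D(q)$ together with the triangle inequality gives, uniformly in $\alpha\in F$,
\[
\|q(h\alpha)-q(\alpha)\| \;\geq\; \|q(h)\| - 2\max_{\alpha\in F}\|q(\alpha)\| - D(q).
\]
Since $q$ is proper, the sub-level set $A_R := \{h\in\G : \|q(h)\|\leq R\}$ is finite for every $R$. Given $\varepsilon>0$, choose $R$ large enough that $\exp\bigl(-2t^2(R-2\max_{\alpha\in F}\|q(\alpha)\|-D(q))^2\bigr)<\varepsilon$. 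Splitting the sum over $h\in A_R$ and $h\notin A_R$ then gives
\[
\|\mathfrak m_t(v_k)\delta_\alpha\|^2 \;\leq\; \sum_{h\in A_R}|(v_k)_h|^2 \;+\; \varepsilon\|v_k\|_2^2.
\]

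Finally, weak convergence $v_k\to 0$ in $M$ forces every Fourier coefficient $(v_k)_h=\tau(v_k u_h^*)$ to go to $0$, and since $A_R$ is finite the first term vanishes as $k\to\infty$. Summing over the finite set $F$ and using the trace bound $\|v_k\|_2\leq\|v_k\|_\infty$ (which is uniformly bounded by hypothesis), we obtain $\limsup_k\|\mathfrak m_t(v_k)P_F\|_{HS}^2 \leq \varepsilon|F|\sup_k\|v_k\|_2^2$; letting $\varepsilon\to 0$ finishes the proof. I expect no serious obstacle: the argument is essentially a direct manipulation, and the key conceptual point---that $\mathfrak m_t$ behaves as a ``compact'' operator on Fourier coefficients---follows cleanly from the interplay between the Gaussian decay in the kernel $\kappa_t$, the quasi-cocycle identity (which passes from $\|q(h)\|$ to $\|q(h\alpha)-q(\alpha)\|$ uniformly in $\alpha\in F$), and the properness of $q$. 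The only bit of bookkeeping is ensuring uniformity over $\alpha\in F$, handled by replacing $\|q(\alpha)\|$ with $\max_{\alpha\in F}\|q(\alpha)\|$.
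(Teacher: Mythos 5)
Your proof is correct and follows essentially the same route as the paper's: bound the Gaussian kernel from below via the quasi-cocycle defect, use properness of $q$ to isolate a finite exceptional set of Fourier modes, and kill those with weak convergence while the tail is controlled by $\|v_k\|_2$. The only cosmetic differences are that you package the supremum over $\xi$ as the Hilbert--Schmidt norm of $\mathfrak m_t(v_k)P_F$ where the paper expands $\xi$ in an orthonormal basis of the second tensor factor, and that you work directly with the kernel $\|q(h\alpha)-q(\alpha)\|^2$ from the definition of $\mathfrak m_t$ while the paper uses the kernel $\|q(h\alpha)-\pi_h(q(\alpha))\|^2$ coming from the $V_t$-conjugation picture; both are handled identically.
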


\begin{proof} Let $\xi=\sum_s \xi_s\otimes \eta_s$ where $\{\eta_s\}_{s\in \mathbb N} $ is an orthonormal basis of $L^2(M)$ and $\xi_s=\sum_r \xi^s_r \de_r$. Notice that $\|\xi\|\leq 1$ amounts to $\sum_s\sum_r|\xi^s_r|^2\leq 1$. Without loss of generality we may assume that the set $F$ consists of one element, i.e., $F=\{\g\}$. Let $v_k =\sum_h v^k_hu_h$ be the Fourier expansion, where $v^k_h\in \mathbb C$. Then applying the formula for $\mathfrak m_t$ and using $\sum_s|\xi^s_\g|^2\leq 1$  we have that

\begin{equation}\label{551}\begin{split}\|(\mathfrak m_t(v_k)\otimes 1)(P_F\otimes 1)\xi\|^2&=\sum_s\| eV_t v_kV_{-t}(\xi^s_\g \de_\g)\|^2\\&=\sum_s\|\sum_h eV_tv^k_hu_hV_{-t}(\xi^s_\g \de_\g)\|^2\\
&=\sum_s\|\sum_h(\tau(\upsilon_t(h\g)\sigma_h (\upsilon_{-t}(\g)))1) (v^k_h \xi^s_\g)\de_{h\g}\|^2\\
&=\sum_s\sum_h|v^k_h\xi^s_\g|^2 |\tau(\upsilon_t(h\g)\sigma_h (\upsilon_{-t}(\g)))|^2\\
&=\sum_s\sum_h|v^k_h\xi^s_\g|^2 \exp\left(-t^2\|q(h\g)-\pi_h (q(\g))\|^2\right)\\
&\leq \sum_h|v^k_h|^2 \exp\left(-t^2\|q(h\g)-\pi_h (q(\g))\|^2\right).
\end{split}\end{equation}

\noindent Furthermore, by the quasi-cocycle relation, the last term above is smaller than
  \begin{equation}\label{10001}\leq \sum_h |v^k_h|^2 \exp\left(-\frac{t^2}{2}\|q(h)\|^2+t^2D(q)\right).\end{equation}

Fix $\varepsilon>0$. Since $q$ is proper there exists a finite subset $F_\varepsilon\subset \G$ such that
 $\displaystyle\frac{2}{t^2}\ln\left(\frac{2\exp \left(t^2 D(q)\right)}{\varepsilon}\right) \leq \|q(h)\|^2$, for all $h\in \G\setminus F_\varepsilon$. This obviously implies
 that \begin{equation}\label{561}\exp\left(-\frac{t^2}{2}\|q(h)\|^2+t^2D(q)\right)\leq \frac{\varepsilon}{2\max_k\|v_k\|^2_\infty},\end{equation} for all $h\in \G\setminus F_\varepsilon$. Since  the sequence $v_k$ converges weakly to $0$ as $k$ approaches $\infty$ and $F_\varepsilon$ is finite, one can find $k_\varepsilon\in\mathbb N$
 such that, for all $k \geq k_\varepsilon$ and all $h\in F_\varepsilon$, we have
 \begin{equation}\label{571}|v^k_h|\leq \left(\frac{\varepsilon}{2|F_{\varepsilon}|\max_{h\in F_\varepsilon}\exp\left(-\frac{t^2}{2}\|q(h)\|^2+t^2D(q)\right)}\right)^{\frac{1}{2}}.\end{equation}

Using (\ref{551}), (\ref{10001}), (\ref{561}), and (\ref{571}) we obtain that for all $k\geq k_\varepsilon$ we have
\begin{equation*}\begin{split}\|&(\mathfrak m_t(v_k)\otimes 1)(P_F\otimes 1)\xi\|^2 \\
&\leq \sum_h|v^k_h|^2 \exp\left(-\frac{t^2}{2}\|q(h)\|^2+t^2D(q)\right)\\
&= \sum_{h\in F_\varepsilon}|v^k_h|^2 \exp\left(-\frac{t^2}{2}\|q(h)\|^2+t^2D(q)\right)+\sum_{h\in \G\setminus F_\varepsilon}|v^k_h|^2 \exp\left(-\frac{t^2}{2}\|q(h)\|^2+t^2D(q)\right)\\
&\leq  \sum_{h\in F_\varepsilon}\frac{\varepsilon}{2|F_{\varepsilon}|\max_{h\in F_\varepsilon}\exp\left(-\frac{t^2}{2}\|q(h)\|^2+t^2D(q)\right)} \exp\left(-\frac{t^2}{2}\|q(h)\|^2+t^2D(q)\right)+\\ &\quad+\sum_{h\in \G\setminus F_\varepsilon}
|v^k_h|^2\frac{\varepsilon}{2\max_k\|v_k\|^2_\infty}\\
&\leq  \frac{\varepsilon}{2}+\frac{\varepsilon}{2}=\varepsilon, \end{split}\end{equation*} which gives the desired conclusion.
\end{proof}

\section{The proof of Theorem \ref{solidity}}

We begin by proving a general result (Theorem \ref{solidity1} below) describing the ``position'' of all commuting subalgebras in crossed products $L^\infty( X)\rtimes \G$ arising from actions $\G\ca X$ of exact groups $\G$ admitting proper arrays. The strategy of proof will
essentially follow Theorem 4.3 in \cite{PetL2}, formally replacing the family of almost derivations with the one-parameter group $\a_t$ constructed in \S\ref{path}. We note that unlike the proofs of solidity by Popa
\cite{PoFree} and Vaes \cite{Vae}---which also make use of one-parameter automorphism groups---we cannot directly appeal to spectral gap
arguments and must, like Peterson, make fundamental use of Haagerup's criterion for amenability. We state Haagerup's criterion here for the convenience of the reader.

\begin{prop}[Haagerup, Lemma 2.2 in \cite{Haa}]\label{108} Let $M$ be a ${\rm II}_1$ factor. A von Neumann subalgebra $N \subset M$ is amenable
if and only if for every non-zero projection $p\in \Cal Z (N)$ and every finite set of unitaries $F\subset \Cal U(Np)$ we have
\begin{equation}\|\sum_{u\in F}u \otimes\bar u\|_\infty=|F|.\end{equation}
\end{prop}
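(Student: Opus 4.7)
The plan is to derive both implications from Connes' foundational equivalence between amenability, injectivity, and semidiscreteness for a finite von Neumann subalgebra $N \subset M$ of a ${\rm II}_1$ factor. Recall that semidiscreteness amounts to saying that the $\ast$-representation
\[
\pi : N \odot N^{\rm op} \to \fr B(L^2(N)), \qquad \pi(x \otimes y^{\rm op}) = x JyJ,
\]
extends continuously to the minimal tensor norm, and equivalently that there exists an $N$-central state on $\fr B(L^2(M))$ restricting to a faithful normal tracial state on $N$ (a hypertrace). The interpretation I will use is that $\bar u$ is the image of $u$ in a copy of $N^{\rm op}$ acting on $L^2(M)$ via $JuJ$, so that $\|\sum_{u \in F} u\otimes\bar u\|_\infty$ is the minimal-tensor-product norm on $C^\ast_\la(N) \otimes C^\ast_\la(N)^{\rm op}$.

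For the easier direction, assume $N$ is amenable. By semidiscreteness, $\pi$ factors through $\otimes_{\min}$, so
\[
\bigl\|\pi\bigl(\textstyle\sum_{u\in F} u\otimes \bar u\bigr)\bigr\|_{\fr B(L^2(N))} \leq \bigl\|\textstyle\sum_{u\in F} u\otimes \bar u\bigr\|_{\min}.
\]
A one-line computation shows $\pi(u\otimes\bar u)\widehat{1} = u\,(JuJ)\,\widehat{1} = u\widehat{u^*} = \widehat{1}$, hence summing over $F$ yields $\pi(\sum u\otimes\bar u)\widehat{1} = |F|\widehat{1}$, so the left-hand norm is at least $|F|$. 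Combined with the trivial triangle-inequality bound $\leq |F|$, this gives the equality. Working over $Np$ reduces to the factor case, using the central cover.

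For the harder converse, assume the norm condition for all central cut-downs $Np$ and all finite $F \subset \Cal U(Np)$. The strategy is to manufacture a hypertrace on $\fr B(L^2(M))$. Observe that the unit vector $\widehat 1 \otimes \widehat 1 \in L^2(N) \otimes L^2(N)$ attests, via the state $\vp_F(T) = \langle T(\widehat 1\otimes\widehat 1),\widehat 1\otimes\widehat 1\rangle$, that $\sum_{u \in F}\vp_F(u\otimes\bar u) = |F|$ forces each $\vp_F(u\otimes\bar u)$ to be close to $1$ when the minimal-norm equality is attained to within $\ve$. This ``near-flatness'' forces the existence of an approximately-central net of unit vectors $\xi_i \in L^2(M)$ with $\|u\xi_i - \xi_i\, u\|_2 \to 0$ for every $u \in \Cal U(N)$, by a standard geometry-of-Hilbert-space trick (vectors realising equality in Cauchy--Schwarz). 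The vector state $T \mapsto \langle T\xi_i, \xi_i\rangle$ on $\fr B(L^2(M))$ then admits a weak$^\ast$-cluster point $\vp$ which is $\Cal U(N)$-central and restricts to a tracial state on $N$; running this argument on every $Np$ and patching via the centre of $N$ produces a hypertrace that restricts to a \emph{faithful} tracial state on $N$, giving amenability by Connes.

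The main obstacle is precisely this last paragraph: extracting an approximately central net from the norm equation on a single finite set $F$ requires a careful Hahn--Banach / ultrafilter argument, and gluing the information across all central projections $p \in \Cal Z(N)$ (to secure a \emph{faithful} hypertrace and thereby the true amenability of $N$, not just a quotient) is the delicate point where the role of the quantifier over $p$ in the statement becomes essential.
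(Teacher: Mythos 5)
The paper does not prove this proposition at all: it is quoted verbatim from Haagerup (Lemma 2.2 in \cite{Haa}) and used as a black box, so there is no in-paper argument to compare yours against. Judged on its own terms, your forward direction (amenable $\Rightarrow$ norm equality) is correct: semidiscreteness makes $x\otimes \bar y\mapsto xJyJ$ min-continuous, $uJuJ\widehat{p}=\widehat{p}$ for $u\in\Cal U(Np)$, and the trivial upper bound finishes it. The converse -- which is the direction the paper actually invokes in Theorem 3.2 -- is where the problem lies.

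The concrete error is your claim that the vector $\widehat 1\otimes\widehat 1$ ``attests'' that each $\vp_F(u\otimes\bar u)$ is close to $1$. Computing directly, $\ip{(u\otimes\bar u)(\widehat 1\otimes\widehat 1)}{\widehat 1\otimes\widehat 1}=\ip{u\widehat1}{\widehat1}\,\overline{\ip{u\widehat1}{\widehat1}}=\abs{\tau(u)}^2$, which vanishes for any trace-zero unitary; so this state sees nothing of the norm equality. What the hypothesis $\nor{\sum_{u\in F}u\otimes\bar u}_{\min}=\abs{F}$ actually provides is the \emph{existence}, for each $\ve>0$, of some unit vector $\eta\in L^2(M)\otimes \overline{L^2(M)}$ with $\nor{\sum_u(u\otimes\bar u)\eta}>\abs{F}-\ve$; a convexity argument then forces ${\rm Re}\ip{(u\otimes\bar u)\eta}{\eta}>1-\ve'$ for each $u$, hence $\nor{(u\otimes\bar u)\eta-\eta}$ small. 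From there one must identify $\eta$ with a Hilbert--Schmidt operator $T$ satisfying $\nor{uTu^*-T}_{HS}\to 0$, pass to $\abs{T}$ via the Powers--St{\o}rmer inequality to obtain almost-$\Cal U(Np)$-invariant positive trace-class densities, and take a weak$^*$ limit to produce an $Np$-central state on $\fr B(L^2(M))$; the implication from such a hypertrace to injectivity is itself a nontrivial component of Connes' theorem, not a formality. None of these steps is present beyond the phrase ``standard trick,'' and since they constitute the entire content of Haagerup's lemma, the converse direction remains unproved as written.
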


Using this criterion, in cooperation with the technical results from the previous section, we show the following.

\begin{theorem}\label{solidity1} Let $\G$ be an exact group together with a finite family of subgroups $\mathcal F$. We assume that $\G$ admits an array into a weakly-$\ell^2$ representation that is proper with respect to $\mathcal F$.  Also, let  $\G \ca X$ be a free, ergodic, measure-preserving action of $\G$ on a probability space and denote by $M = L^\infty(X)\rtimes \G$. Then for any diffuse von Neumann subalgebra $A\subset M$, either:
\begin{enumerate} \item $A'\cap M$ is amenable; or, \item $A \preceq_M L^\infty(X)\rtimes \Sigma$, for some $\Sigma\in\mathcal F$. \end{enumerate}
\end{theorem}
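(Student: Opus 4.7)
The plan is to argue by contradiction: assume simultaneously that $A'\cap M$ is non-amenable and that $A\not\preceq_M L^\infty(X)\rtimes \Sg$ for every $\Sg\in\Cal F$. Haagerup's criterion (Proposition \ref{108}) yields a non-zero central projection $p\in\Cal Z(A'\cap M)$, a finite set of unitaries $F\subset\Cal U((A'\cap M)p)$, and a constant $c<1$ with $\|\sum_{u\in F}u\oo \bar u\|_\infty\leq c|F|$. Popa's intertwining-by-bimodules, combined with the finiteness of $\Cal F$ via a standard diagonal argument, furnishes a sequence of unitaries $(a_n)\subset\Cal U(Ap)$ converging weakly to $0$ in $M$ and satisfying $\|E_{L^\infty(X)\rtimes\Sg}(xa_n y)\|_2\to 0$ for every $x,y\in M$ and every $\Sg\in\Cal F$.

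Next I would deploy the deformation $\a_t=\mathrm{Ad}(V_t)$ on the extended Roe algebra $C^*_u(\G\ca Z)$ built from the Gaussian action associated to the array $q$. Since $A\subset L^\infty(X)\rtimes_{\s,r}\G$, Lemma \ref{deform-ineq} supplies uniform-norm continuity of $\a_t$ on $A$, while Lemma \ref{transversality} gives the estimate $2\|V_t\xi - eV_t\xi\|^2\geq\|V_t\xi-\xi\|^2$. Exploiting $[u,a_n]=0$ for $u\in F$ in a Peterson--Popa-style averaging argument, the spectral-gap bound $\|\sum_u u\oo \bar u\|\leq c|F|$ precludes $V_t$ from being simultaneously close to the identity on the vectors $\{ua_n\cdot\hat 1\}_{u\in F}$; this produces $t_0>0$ and $\e>0$, independent of $n$, for which $\sum_{u\in F}(1-\mathrm{Re}\langle V_{t_0}(ua_n\cdot\hat 1),ua_n\cdot\hat 1\rangle)\geq \e$ for all $n$.

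The third step derives a contradiction by proving that the displayed quantity actually converges to $0$. The inner product $\langle V_{t_0}(b\cdot\hat 1),b\cdot\hat 1\rangle$ is reduced to $\tau(b^*\fr m_{t_0}(b))$ through the explicit form of $V_t$ and the Schur multiplier (cf.\ Section \ref{sec:Roe}). Properness of $q$ with respect to $\Cal F$, together with the intertwining hypothesis, forces the Fourier mass of each $ua_n$ to eventually escape any finite union $G\Sg H$ with $G,H\subset\G$ finite and $\Sg\in\Cal F$, outside of which $\|q(\g)\|$ grows without bound, so the exponential decay built into $\fr m_{t_0}$ damps the relevant Fourier modes. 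For the finitely many remaining modes on which $\|q\|$ is bounded, Proposition \ref{improuvconv2} leverages weak-null convergence of $(a_n)$ to force their $\fr m_{t_0}$-contributions to vanish. Together these yield $\tau((ua_n)^*\fr m_{t_0}(ua_n))\to 1$, contradicting the previous paragraph.

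The chief obstacle, I expect, is that $\a_t$ does not preserve $M$---it only acts on the larger extended Roe algebra $C^*_u(\G\ca Z)$---so the averaging in step two must be executed at the C$^\ast$-algebraic level on $L^\infty(X)\rtimes_{\s,r}\G$. Transferring the resulting estimates to the von Neumann algebra $M$ requires Ozawa's local reflexivity technique from \cite{OzSolid,BrOz}: one approximates elements of $M$ in the weak-operator topology by nets in the reduced crossed product while keeping their $\a_t$-displacement uniformly norm-controlled, and then extracts weak-$\ast$ limits of the associated u.c.p.\ maps on $M\oo \bar M$ through Arveson extension. Exactness of $\G$ is precisely what enables this final step.
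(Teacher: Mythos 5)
Your proposal assembles the right ingredients, but the central logical chain is inverted and one key analytic claim is false. In your third step you argue that properness of $q$ forces the Fourier mass of $ua_n$ out of every set $G\Sigma H$ on which $\nor{q}$ is bounded, so that ``the exponential decay built into $\fr m_{t_0}$ damps the relevant Fourier modes,'' and you conclude $\tau((ua_n)^*\fr m_{t_0}(ua_n))\to 1$. The damping does exactly the opposite: writing $b=\sum_\g b_\g u_\g$ one has (cf.\ equations (\ref{111}) and (\ref{23})) $\mathrm{Re}\,\langle V_{t_0}(b\cdot\hat 1),b\cdot\hat 1\rangle=\sum_\g\nor{b_\g}_2^2\exp(-t_0^2\nor{q(\g)}^2)$, so if the Fourier mass escapes the region where $\nor{q}$ is bounded this quantity tends to $0$, not $1$; equivalently $1-\mathrm{Re}\,\langle V_{t_0}(ua_n\cdot\hat 1),ua_n\cdot\hat 1\rangle\to 1$. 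That is perfectly consistent with the lower bound $\geq\e$ you extracted in step two, so no contradiction materializes and the argument collapses. (Proposition \ref{improuvconv2} is likewise a vanishing statement---it kills $\fr m_t(v_k)P_F$---and cannot be used to push an inner product up to $1$.) There is a second, independent problem: your step two needs $V_{t_0}$ to almost commute with the unitaries $u\in\Cal U(Np)$, i.e., $\nor{(\a_{t_0}(u)-u)\cdot e}_\infty$ small, but Lemma \ref{deform-ineq} only provides this for elements of the reduced crossed product $L^\infty(X)\rtimes_{\s,r}\G$, not for arbitrary elements of the von Neumann algebra $N$; this is precisely why the paper warns that one cannot directly run a spectral gap argument here.

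The correct architecture runs the implications the other way and never posits a gap. Non-intertwining plus properness of $q$ relative to $\Cal F$ gives $\nor{e\cdot V_t(v_k)}\to 0$ for a suitable sequence $v_k\in\Cal U(Ap)$, hence $V_t$ does not converge uniformly on $(Ap)_1$; so there are $c>0$ and $x_t\in(Ap)_1$ with $\nor{V_t(x_t)-x_t}\geq c$, and Lemma \ref{transversality} gives $\nor{\xi_t}\geq c/2$ for $\xi_t=(1-e)V_t(x_t)$. These vectors live in a bimodule weakly contained in the coarse $M$-bimodule, and---replacing each $u\in F\subset\Cal U(Np)$ by $\vp_i(u)$ in the reduced crossed product via local reflexivity, so that Lemma \ref{deform-ineq} applies while $\vp_i(u)$ still almost commutes with $x_t$ in $\nor{\,\cdot\,}_2$---one shows $\nor{\vp_i(u)\xi_t\vp_i(u)^*-\xi_t}$ is small, whence $\nor{\sum_{u\in F}u\otimes\bar u}_\infty\geq\abs{F}-\e$. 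Haagerup's criterion then yields amenability of $A'\cap M$ directly, with no appeal to a presumed non-amenability.
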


\begin{proof} Denote by  $N=A'\cap M$. Assuming that $A \not\preceq_M L^\infty(X)$ we will show that $N$ must be amenable. Let $p \in  \Cal Z(N)$ be a non-zero projection and $F \subset \Cal U(Np)$ be a finite set of unitaries.

For convenience we recall that in Section 2.3 we considered a path of unitaries $V_t \in \fr B(L^2(Y ^\pi)\otimes L^2(X)\otimes \ell^2(\G))$ defined as $V_t(\xi\otimes \eta\otimes \de_\g)=\up_t(\g)\xi\otimes\eta\otimes\de_\g$, for every $\xi\in L^2(Y^\pi),\eta\in L^2(X)$, and $\g\in\G$. We claim that, since $A\not\preceq_M L^\infty(X)\rtimes \Sigma$ for all $\Sigma\in\mathcal F$,  $V_t$ cannot converge uniformly to the identity  on $(Ap)_1$. Indeed, if $A\not\preceq_M L^\infty(X)\rtimes \Sigma$ for all $\Sigma\in\mathcal F$ then, by Popa's intertwining techniques, there exists a sequence of unitaries $v_k\in \Cal U(Ap)$ such that for all $s,t\in \G$ we have that  \begin{equation}\label{110} \|E_{L^\infty(X)\rtimes \Sigma}(u_tv_k u_s)\|_2\ra 0\text{ as }k\ra \infty.\end{equation} Then letting $v_k=\sum_\g v^k_\g u_\g \in  L^{\infty}(X)\rtimes \G $ it is easy to see that
\begin{equation}\label{111}
\begin{split}\nor{e\cdot V_t(v_k)}^2 &=\nor{e\cdot  V_t(\sum_\g v^k_\g\de_\g)}^2 \\
 &= \sum_\g  \|v^k_\g\|_2^2\exp\left(-2t^2\nor{q(\g)}^2\right )\\
 &= \sum_\g  \|E_{L^\infty(X)}(v_k u^*_\g)\|_2^2\exp\left(-2t^2\nor{q(\g)}^2\right ).\end{split}
 \end{equation}
Since $q$ is proper relative to the family $\mathcal F$, an easy computation shows that (\ref{110}) together with (\ref{111}) imply that $\|e\cdot  V_t(v_k)\|$ converges to $0$, as $k\ra \infty $. Hence our claim follows because we have $\nor{V_t(x) - x}\geq
\nor{e\cdot V_t(x) - x}$, for all $x \in Ap$.

In conclusion, one can find a constant $c > 0$ such that for every $t > 0$ there exists $x_t \in (Ap)_1$ such that $\nor{V_t( x_t) - x_t} \geq c$. Let us denote $V_t( x_t)$ by $\zeta_t$ and define $\xi_t$ to be $\zeta_t - e(\zeta_t)$. By Lemma \ref{transversality}, we have that $\displaystyle\nor{\xi_t} \geq \frac{c}{2}$.

Let $E \subset L^\infty(X)\rtimes \G$ be the operator system spanned by $\{p\} \cup F \cup F^\ast$. Since $\G$ is exact and $L^{\infty}(X)$ is abelian (hence, nuclear as a $C^\ast$-algebra), for every $\G$-invariant, separable $C^\ast$-algebra $A$ the reduced crossed product $A\rtimes_{\s,r} \G$ is exact, which implies that $L^{\infty }(X)\rtimes_{\s,r} \G$ is locally reflexive. Therefore, one can find a net $(\vp_i)_{i \in I}$ of contractive completely positive maps $\vp_i: E\to L^\infty(X)\rtimes_{\s,r} \G$ such that $\vp_i \to \id_E$ pointwise-ultraweakly. In fact, by passing to convex combinations of the $\vp_i$'s, we may assume that $\vp_i(u) \to u$ in the strong* topology for all $u\in F$. Fixing $i \in I$, we have that for all $u \in F$

\begin{equation}\label{spectralgap}\begin{split}
&\lim_{t\to 0}\nor{\vp_i(u)\xi_t\vp_i(u^*) - \xi_t}_2 \\
&= \lim_{t\to 0}\nor{(1-e)(\vp_i(u)\zeta_t\vp_i(u^*) - \zeta_t)}_2 \\
&\leq \lim_{t\to 0}\nor{\vp_i(u)J\vp_i(u)J(\zeta_t) - \zeta_t}_2 \\
&\leq  \lim_{t\to 0}\nor{\vp_i(u)V_t\left (\alpha_{-t}(J\vp_i(u)J)-J\vp_i(u)J\right )x_t}_2+ \lim_{t\to 0}\nor{V_t\left (\alpha_{-t}(\vp_i(u))-\vp_i(u)\right )J\vp_i(u)J x_t}_2\\ &\quad + \lim_{t\to 0}\nor{V_t\left (\vp_i(u)J\vp_i(u)J x_t- x_t\right)}_2\\
&\leq  \lim_{t\to 0}\nor{\left (\alpha_{-t}(J\vp_i(u)J)-J\vp_i(u)J\right ) x_t}_2+ \lim_{t\to 0}\nor{\left (\alpha_{-t}(\vp_i(u))-\vp_i(u)\right )J\vp_i(u)J x_t}_2\\ &\quad + \lim_{t\to 0}\nor{\vp_i(u))J\vp_i(u)Jx_t- x_t}_2\\
&\leq 2\lim_{t\to 0}\nor{(\a_t(\vp_i(u)) - \vp_i(u))\cdot e}_\infty+\lim_{t\to 0}\nor{\vp_i(u)x_t\vp_i(u^*) - x_t}_2 \\
&= \lim_{t\to 0}\nor{\vp_i(u)x_t\vp_i(u^*) - x_t}_2 \\
&\leq \lim_{t\to 0}\nor{u x_t u^* -x_t}_2 + 2\lim_{t\to 0}\nor{x_t}_\infty \nor{\vp_i(u) - u}_2 \\
&\leq 2\nor{\vp_i(u) - u}_2
\end{split}
\end{equation}

Given $\varepsilon >0$, let us choose $i \in I$ such that $\displaystyle\sum_{u\in F} \nor{\vp_i(u) - u}_2 \leq \frac{ c\varepsilon}{4}$. Viewing $\mathcal H = L_0^2(Y ^\pi)\otimes L^2(X)\otimes \ell^2(\G)$ in the natural way as a Hilbert $M$-bimodule, we have that $\mathcal H$ is weakly contained in the coarse bimodule over $M$ (cf.\ Lemma 5.1 in \cite{OPCartanII}). Hence, the induced representation $\pi': M\otimes_{\rm alg} M^o\to \fr B(\mathcal H)$ given by $\pi'(x\otimes y^o)\xi = x\xi y$ extends in the minimal tensor norm. Thus, by the calculations above there exists $t >0$ such that
\begin{equation*}\label{solidityineq}\begin{split} \nor{\sum_{u\in F}u \otimes \bar u}_\infty &\geq \nor{\sum_{u\in F}\vp_i(u) \otimes \overline{\vp_i(u)}}_\infty \\
&\geq \frac{\nor{\sum_{u\in F}\vp_i(u)\xi_t\vp_i(u^*)}_2}{\nor{\xi_t}_2} \geq \abs{F} - \varepsilon.
\end{split}\end{equation*}
Hence, by Haagerup's criterion we have that $N$ is amenable, a contradiction.
\end{proof}

\begin{proof}[Proof of Theorem \ref{solidity}] Let $\G $ be an exact group belonging to the class $\mathcal {QH}_{\rm reg}$ and  let $A\subset L\G$ be a diffuse subalgebra. Applying the previous theorem in the case when $X$ consists is a point, we have either $A'\cap M$ is amenable or $A\preceq_M \mathbb C$. However, the second case is impossible since $A$ is diffuse; hence, it follows that $L\G$ is solid.
\end{proof}

\section{The proof of Theorem \ref{strongsolidity} and corollaries}

The above techniques for proving solidity can be upgraded to more general situations.  Specifically,  we obtain a result  describing all weakly compact embeddings in the crossed product von Neumann algebras arising from actions of hyperbolic groups (Theorem \ref{controlweakembedding}). Though our approach follows the general outline of the proof of Theorem B in \cite{OPCartanII} there are substantial technical issues which arise in working with deformations from quasi-cocycles. For instance, one has to confront a lack of traciality in the spectral gap arguments. However, we believe that, by dealing with these obstacles, the techniques  developed bring new insight in proving structural results for these factors.

To state the main theorem we need to recall the notion of weak compactness introduced by Ozawa and Popa in \cite{OPCartanI}. Briefly, a given inclusion of finite von Neumann algebras $B\subseteq Q$ is said to be a weakly compact embedding if the natural action by conjugation of the normalizer $\mathcal N_Q(B)\ca B$ is \emph{weakly compact}. This means that there exists  a net of positive unit vectors $(\eta_n)_{n\in\mathbf N}$ in $L^2(Q) \otimes L^2(\bar Q)$ which simultaneously satisfies the following relations:
\begin{enumerate}

\item $\nor{\eta_n - (v \otimes \bar v)\eta_n}\to 0$, for all $v \in \Cal U(B)$;

\item $\nor{[u \otimes \bar u,\eta_n]}\to 0$, for all $u \in \Cal N_Q(B)$; and

\item $\ip{(x \otimes 1)\eta_n}{\eta_n} = \tau (x) = \ip{(1 \otimes \bar x)\eta_n}{\eta_n}$, for all $x \in Q$.

\end{enumerate}

\begin{theorem}\label{controlweakembedding} Let $\G$ be an exact group which admits a proper quasi-cocycle into a weakly-$\ell^2$ representation. Let $\G\curvearrowright X$ be a measure-preserving action on a probability space and denote by $M=
L^\infty(X)\rtimes\G$. If $P\subset M$ is a weakly compact embedding, then one of the following possibilities must hold:
\begin{enumerate}\item $P\preceq_M L^\infty(X)$; \item $\mathcal N_M(P)''$ is amenable.\end{enumerate}
\end{theorem}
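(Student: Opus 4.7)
The plan is to adapt the solidity argument of Theorem \ref{solidity1} to the weakly compact normalizer setting. Assume $P \not\preceq_M L^\infty(X)$; I will verify Haagerup's criterion (Proposition \ref{108}) for $\mathcal{N} := \mathcal{N}_M(P)''$ by producing, for each central projection $p\in\Cal Z(\mathcal N)$, each finite set $F\subset \Cal U(\mathcal N p)$, and each $\varepsilon>0$, a unit vector in a Hilbert $M$-bimodule weakly contained in the coarse bimodule on which the diagonal action $u\mapsto u\otimes\bar u$ of $F$ is almost trivial. The vectors will be manufactured by applying the deformation $V_t$ from Section \ref{path} to the weak-compactness net, and then projecting onto the orthogonal (``transverse'') component.

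\textbf{Producing transverse vectors.} Let $(\eta_n)\subset L^2(M)\otimes L^2(\bar M)$ be a weak-compactness net and, for $t>0$, set $\zeta_{n,t}:=(V_t\otimes 1)\eta_n$ and $\xi_{n,t}:=\zeta_{n,t}-(e\otimes 1)\zeta_{n,t}$. Since $P\not\preceq_M L^\infty(X)$, Popa's intertwining criterion produces a sequence $v_k\in \Cal U(Pp)$ with $\nor{E_{L^\infty(X)}(v_k u_\g^\ast)}_2\to 0$ for each $\g\in \G$, and the calculation (\ref{111}) gives $\nor{(e\otimes 1)(V_t\otimes 1)v_k}_2\to 0$ as $k\to\infty$. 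Combined with the $P$-invariance axiom (1), $\nor{\eta_n-(v_k\otimes\bar v_k)\eta_n\|\to 0}$, a standard diagonal argument produces, for each fixed $t$, a reindexed sequence for which $\nor{(e\otimes 1)\zeta_{n,t}}\to 0$. By Lemma \ref{transversality} applied on the left tensor factor, $\nor{\xi_{n,t}}\geq c$ for an absolute constant $c>0$, independent of $t$.

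\textbf{Normalizer almost-invariance via local reflexivity.} The crux is to show that, after suitable reindexing, $\xi_{n,t}$ is asymptotically fixed by the diagonal conjugation of $F$. For $u\in \mathcal{N}$, axiom (2) gives $\nor{[u\otimes \bar u,\eta_n]}\to 0$; conjugating by $V_t\otimes 1$, the residual error is driven by the commutator $[V_t,u]$. Exactness of $\G$ together with nuclearity of $L^\infty(X)$ yields local reflexivity of $M$, so I approximate each $u\in F$ by $\vp_i(u)$ lying in $L^\infty(X)\rtimes_{\s,r}\G$ with $\nor{\vp_i(u)-u}_2<\varepsilon c/(4|F|)$. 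On this reduced $\rm C^\ast$-algebra Lemma \ref{deform-ineq} supplies $\nor{(\a_{-t}(\vp_i(u))-\vp_i(u))\cdot e}_\infty\to 0$ as $t\to 0$, and an estimate modeled exactly on (\ref{spectralgap}) gives
\begin{equation*}
\limsup_{t\to 0}\nor{(\vp_i(u)\otimes\overline{\vp_i(u)})\xi_{n,t}-\xi_{n,t}}\leq 2\nor{\vp_i(u)-u}_2+o_n(1).
\end{equation*}
The quasi-cocycle hypothesis, as opposed to merely an array, enters through Proposition \ref{improuvconv2}: it provides the uniform compactness estimate needed to absorb the cross-terms arising from the right-hand copy of $u$ when pushed past $V_t$.

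\textbf{Conclusion and main obstacle.} The transverse space $\mathcal H=(L^2(Y^\pi)\ominus \bb C)\otimes L^2(X)\otimes\ell^2(\G)\otimes L^2(\bar M)$ is, as an $M$-bimodule, weakly contained in the coarse bimodule (cf.\ Lemma 5.1 in \cite{OPCartanII}), so $\pi':M\otimes_{\rm alg} M^{\rm op}\to \fr B(\mathcal H)$ extends to the minimal tensor norm; tuning $\vp_i,t,n$ as above yields
\begin{equation*}
\Bigl\lVert\sum_{u\in F}u\otimes \bar u\Bigr\rVert_\infty \geq \frac{\nor{\sum_{u\in F}\vp_i(u)\xi_{n,t}\overline{\vp_i(u)}}}{\nor{\xi_{n,t}}} \geq |F|-\varepsilon,
\end{equation*}
so Haagerup's criterion gives amenability of $\mathcal N p$, hence of $\mathcal N$. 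The main obstacle is the normalizer-invariance step: unlike the trace-preserving free malleable deformations used in \cite{OPCartanII}, the unitary $V_t$ is not $M$-trace-preserving, which forbids a naive spectral-gap argument at the level of $M$. Circumventing this by combining local reflexivity (to descend to $L^\infty(X)\rtimes_{\s,r}\G$, where Lemma \ref{deform-ineq} applies) with the compactness-type estimate of Proposition \ref{improuvconv2} is the essential technical payoff of replacing deformations from cocycles by those from quasi-cocycles.
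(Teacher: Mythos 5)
Your overall architecture is the right one and matches the paper's: deform the weak-compactness net by $V_t\otimes 1$, extract the transverse component $\xi_{n,t}$, use local reflexivity together with Lemma \ref{deform-ineq} and the compactness phenomenon of Proposition \ref{improuvconv2} to get approximate $\mathrm{Ad}(u\otimes\bar u)$-invariance, and close with Haagerup's criterion. However, the step ``a standard diagonal argument produces, for each fixed $t$, a reindexed sequence for which $\nor{(e\otimes 1)\zeta_{n,t}}\to 0$'' is a genuine gap, and it is exactly the point where the real work lies. The fact that $\nor{e\cdot V_t(\hat v_k)}_2\to 0$ is a statement about $v_k$ viewed as a \emph{vector}; it gives no control over $(e\otimes 1)(V_t\otimes 1)(v_k\otimes\bar v_k)\eta_n$, because $(e\otimes1)(V_t\otimes1)$ does not commute past left multiplication by $v_k\otimes\bar v_k$ and the Schur multiplier $\tilde{\fr m}_t$ is not multiplicative: writing $\eta_n=\sum_s a_s^n\otimes b_s$, one must control $e V_t(v_k a_s^n)$, whose Fourier coefficients are convolutions, not products. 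Moreover, any diagonal extraction over $n$ and $k$ requires the $k\to\infty$ convergence to be \emph{uniform in $n$}; supplying that uniformity is precisely the content of the paper's Lemma \ref{improuvconv} (note the $\sup_n$), and even then one only controls the finite-rank truncation $(\tilde{\fr m}_t(v_k)\otimes 1)(P_F\otimes 1)\zeta_{n,t}$. Feeding this into the triangle-inequality chain of Lemma \ref{OP4.11} yields only $\Lim_n\nor{\xi_{n,t}}\geq \frac{5}{12}\nor{p}_2$ rather than full transversality --- which fortunately suffices. (Your appeal to Lemma \ref{transversality} is also misplaced: if $\nor{(e\otimes1)\zeta_{n,t}}$ really tended to $0$ you would get $\nor{\xi_{n,t}}\to\nor{p}_2$ for free; transversality is the tool used in the solidity proof, where one instead has elements genuinely moved by $V_t$.)

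A second, smaller understatement: the invariance step cannot be ``modeled exactly on (\ref{spectralgap}),'' because there the unitaries $u$ commute on the nose with the elements $x_t\in (Ap)_1$, whereas here $u\in\Cal N_M(P)$ only approximately commutes with $\eta_n$ via axiom (2). One must approximate $\vp_i(up)$ in $\nor{\,\cdot\,}_2$ by finite linear combinations of normalizing unitaries (Kaplansky density), push the commutator through \emph{both} legs of the tensor product using axiom (2) and the identity $\nor{\eta_n(z\otimes1)}=\nor{z}_2$, and keep the accumulated error below the $\frac{5}{12}\nor{p}_2$ threshold from the previous step; this is the content of Lemmas \ref{almostcomm1} and \ref{almostcomm2} and is where most of the technical effort of the proof is spent. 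As written, your proposal identifies the correct strategy and the correct tools but leaves both of these essential estimates unproved.
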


 \begin{proof} We will denote by  $N =\Cal  N_M(P)''$ and fix $p\in \mathcal Z(N'\cap M)$ a projection. The general strategy of the proof to show that the assumption $P\not\preceq_M L^\infty(X)$ implies that $Np$ is amenable. By assumption $P\subset M$ is weakly compact, so there exists a net of unit vectors $(\eta_n)_{n\in\mathbf N}$ in $L^2(M) \otimes L^2(\bar M)$ as above.

Let $\mathcal H = L^2_0(Y^\pi)\otimes L^2(X)\otimes \ell^2(\G)$ which as we remarked in the proof of Theorem \ref{solidity1} is weakly contained as an $M$-bimodule in the coarse bimodule. Fixing $t>0$ we consider the unitary $V_t$ associated with the quasi-cocycle $q$ as defined in the previous section. Next denote by $\tilde \eta_{n,t}=(V_t\otimes 1)(p\otimes 1)\eta_n$, $\zeta_{n,t} = (e\otimes1)\tilde\eta_{n,t}= (e\cdot  V_t \otimes 1)(p\otimes 1)\eta_n$, and $\xi_{n,t} = \tilde\eta_{n,t} - \zeta_{n,t}=(e^\perp\otimes 1)\tilde\eta_{n,t}\in \mathcal H\otimes L^2(M)$. Using these notations we first prove the following result which is a technical adaptation of Proposition \ref{improuvconv2}.

\begin{lem}\label{improuvconv} Let $\fr m_t$ be the Schur multiplier associated to the proper quasi-cocycle $q$ on $\G$ and let $\tilde{\fr m}_t = \id_X\otimes \fr m_t$. Let $v_k\in M$ be a bounded sequence of elements such that for all $x,y\in M$ we have  $\|E_{L^\infty(X)}(xv_ky)\|_2 \ra  0 $ as $k\ra \infty$. Then for every $t>0$ and every finite set $F\subset \G$ we have that
\begin{equation*}\lim_{k\ra \infty}\left (\sup_{n}\|(\tilde {\mathfrak m}_t(v_k)\otimes 1)(P_F\otimes 1)\zeta_{n,t}\|\right )=0.\end{equation*}
Here we denoted by $P_F$ the orthogonal projection from $L^2(M)$ onto the linear span of the set $\{L^2(X)\otimes \de_\a \,|\, \a\in F\}$.\end{lem}

\begin{proof} Let $(p\otimes 1)\eta_n=\sum_s a^n_s\otimes b_s$ where $\{b_s\}_{s\in \mathbb N} $ is an orthonormal basis of $L^2(M)$ and $a^n_s=\sum_r a^{s,n}_r\otimes \de_r$. Without loss of generality we may assume that the set $F$ consists of one element, i.e., $F=\{\g\}$. Let $v_k =\sum_h v^k_hu_h$ be the Fourier expansion, where $v^k_h\in L^\infty (X)$. Then applying the formula for $\mathfrak m_t$ together with $(P_F\otimes 1)(\zeta_{n,t})=(P_F\otimes 1)(e\cdot V_t\otimes 1)((p\otimes 1)\eta_n)=\sum_s \left((\exp(-t^2\|q(\g)\|^2)a^{s,n}_\g\right )\otimes \de_\g)\otimes b_s$ and $\exp(-2t^2\|q(\g)\|^2)\leq 1$, we obtain the following formulas

\begin{equation}\label{55}\begin{split}&\|(\tilde{\mathfrak m}_t(v_k)\otimes 1)(P_F\otimes 1)\zeta_{n,t}\|^2\\ &=\sum_s\| eV_t v_kV_{-t}((\exp(-t^2\|q(\g)\|^2)a^{s,n}_\g)\otimes \de_\g)\|^2\\&=\exp(-2t^2\|q(\g)\|^2)\sum_s\|\sum_h eV_tv^k_hu_hV_{-t}(a^{s,n}_\g\otimes \de_\g)\|^2\\
&\leq \sum_s\|\sum_h(\tau(\upsilon_t(h\g)\sigma_h (\upsilon_{-t}(\g)))1)\otimes (v^k_h \s_h(a^{s,n}_\g))\otimes \de_{h\g}\|^2\\
&=\sum_s\sum_h\|v^k_h\s_h(a^{s,n}_\g)\|^2 |\tau(\upsilon_t(h\g)\sigma_h (\upsilon_{-t}(\g)))|^2\\
&=\sum_s\sum_h\|v^k_h\s_h(a^{s,n}_\g)\|^2 \exp\left(-t^2\|q(h\g)-\pi_h (q(\g))\|^2\right)
\end{split}\end{equation}

 \noindent Furthermore, by the quasi-cocycle relation, the last term in the equation above is smaller than

  \begin{equation}\label{1000}\leq \sum_{s}\sum_h\|v^k_h\s_h(a^{s,n}_\g)\|^2 \exp\left(-\frac{t^2}{2}\|q(h)\|^2+t^2D(q)\right).\end{equation}

Applying the identity $\|(x\otimes 1)(p\otimes 1)\eta_n\|=\|xp\|_2$ for all elements of the form  $x=v^k_hu_h$, $h\in \G$, a basic calculation shows that   $\sum_s\sum_r \|v^k_h\s_h(a^{s,n}_r)\|^2=\|v^k_hu_hp\|_2^2$ for all $k\in \mathbb N$, $h\in \G$, and $n$. In particular, this implies that, for all $k\in \mathbb N$, $h\in \G$, and $n$, we have

\begin{eqnarray*}\label{1000000}\sum_s \|v^k_h\s_h(a^{s,n}_\g)\|^2\leq\|v^k_h\|_2^2.\end{eqnarray*}

\noindent  Using these estimates we see that the expression (\ref{1000}) is smaller than

 \begin{equation}\label{1000000}\leq \sum_h\|v^k_h\|_2^2\exp\left(-\frac{t^2}{2}\|q(h)\|^2+t^2D(q)\right).\end{equation}

Fix $\varepsilon>0$. Since the map $q$ is proper, there exists a finite subset $F_\varepsilon\subset \G$ such that
 $\displaystyle\frac{2}{t^2}\ln\left(\frac{2\exp \left(t^2 D(q)\right)}{\varepsilon}\right) \leq \|q(h)\|^2$ for all $h\in \G\setminus F_\varepsilon$. This obviously implies
 that \begin{equation}\label{56}\exp\left(-\frac{t^2}{2}\|q(h)\|^2+t^2D(q)\right)\leq \frac{\varepsilon}{2\max_k\|v_k\|^2_\infty},\end{equation} for all $h\in \G\setminus F_\varepsilon$. Since for all $x,y\in M$,  the sequence $\|E_{L^\infty(X)}(xv_k y)\|_2$ converges  to $0$ as $k$ approaches $\infty$ and $F_\varepsilon$ is finite (depending only on $\varepsilon$ and $q$), then making a suitable choice for $x$ and $y$ one can find $k_\varepsilon\in\mathbb N$
 such that, for all $k \geq k_\varepsilon$ and all $h\in F_\varepsilon$, we have
 \begin{equation}\label{57}\|v^k_h\|_2\leq \left(\frac{\varepsilon}{2|F_{\varepsilon}|\max_{h\in F_\varepsilon}\exp\left(-\frac{t^2}{2}\|q(h)\|^2+t^2D(q)\right)}\right)^{\frac{1}{2}}.\end{equation}

 Altogether, relations  (\ref{55}), (\ref{1000000}), (\ref{56}), and (\ref{57}) show that, for all $k\geq k_\varepsilon$, we have

 \begin{equation*}\begin{split}&\sup_{n}\|(\tilde{\mathfrak m}_t(v_k)\otimes 1)(P_F\otimes 1)\zeta_{n,t}\|^2\\
 &\leq \sum_{h\in F_{\varepsilon}}\|v^k_h\|_2^2 \exp\left(-\frac{t^2}{2}\|q(h)\|^2+t^2D(q)\right)+\sum_{h\in \G\setminus F_{\varepsilon}}\|v^k_h\|_2^2 \exp\left(-\frac{t^2}{2}\|q(h)\|^2+t^2D(q)\right)\\
 &\leq  \sum_{h\in F_{\varepsilon}}\frac{\varepsilon}{2|F_{\varepsilon}|\max_{h\in F_\varepsilon}\exp\left(-\frac{t^2}{2}\|q(h)\|^2+t^2D(q)\right)} \exp\left(-\frac{t^2}{2}\|q(h)\|^2+t^2D(q)\right)  + \\  & +    \sum_{h\in \G\setminus F_\varepsilon}\|v^k_h\|_2^2\left( \frac{\varepsilon}{2\max_k\|v_k\|_\infty}\right)\\
&\leq  \frac{\varepsilon}{2}+\frac{\varepsilon}{2}=\varepsilon, \end{split}\end{equation*} which gives the desired conclusion.
\end{proof}

\noindent Using the notations introduced at the beginning of the proof we show next the following inequality:
\begin{lem}\label{OP4.11} \begin{equation*}\Lim_n\nor{\xi_{n,t}} \geq \frac{5}{12}\|p\|_2,
\end{equation*}

\noindent where ``Lim'' is a limit along a non-principal ultrafilter.
\end{lem}

\begin{proof}Using the triangle inequality multiple times, we have that

\begin{equation*}\begin{split}\nor{\tilde\eta_{n,t} - (e\circ\a_{t}(v) \otimes \bar v)\zeta_{n,t}}
&\leq \nor{\tilde\eta_{n,t} - (e\cdot\a_{t}(v) \otimes \bar v)\tilde \eta_{n,t}} +\nor{\xi_{n,t}}\\
&\leq \nor{\zeta_{n,t} - (e\cdot \a_{t}(v) \otimes \bar v)\tilde \eta_{n,t}} +2\nor{\xi_{n,t}}\\
&\leq \nor{\tilde\eta_{n,t} - (\a_{t}(v) \otimes \bar v)\tilde \eta_{n,t}} +2\nor{\xi_{n,t}}\\
&\leq \nor{\eta_{n} - (v \otimes \bar v)\eta_{n}} +2\nor{\xi_{n,t}}, \end{split}
\end{equation*}
for all $v \in\Cal  U(P)$ and all $n\in \mathbf N$.

\noindent Consequently, since by (3) we have  $\nor{\tilde\eta_{n,t}}=\|p\|_2$, using the triangle inequality again we get

\begin{equation}\label{OP4.8}\nor{(e\circ\a_{t}(v) \otimes \bar v)\zeta_{n,t}}\geq \|p\|_2-2\nor{\xi_{n,t}}-\nor{\eta_{n} - (v \otimes \bar v)\eta_{n}}. \end{equation}

Next we consider the operator $e\cdot V_t\otimes 1$ from $L^2(M)\otimes L^2(M)$ to $L^2(M)\otimes L^2( M)$. Since $q$ is proper one can check that there exists a finite subset $F\subset \G$ such that $\displaystyle \nor{(P_F\otimes 1)(e\cdot
V_t\otimes 1)-e\cdot  V_t\otimes 1}_{\infty}\leq \frac{1}{6}\|p\|_2$: here $P_F$ denotes the orthogonal projection on the linear $\text{span of }L^\infty(X)F$.
Hence using the formula $e\circ \a_t\circ e = e\circ \tilde{\mathfrak m}_t$ together with relation (\ref{OP4.8}) and the triangle inequality, we obtain

\begin{equation}\label{OP4.10} \nor{(\tilde{\mathfrak m}_t(v)\otimes 1)( P_F\otimes 1)\zeta_{n,t}} \geq \frac{5}{6}\|p\|_2-2\nor{\xi_{n,t}}-\nor{\eta_{n} - (v \otimes \bar v)\eta_{n}},
\end{equation}
for all $v \in  \Cal U(P)$ and all $n\in\mathbf N$.

This further implies that

\begin{equation}\label{OP4.100} \sup_{s}\nor{(\tilde{\mathfrak m}_t(v)\otimes 1)( P_F\otimes 1)\zeta_{s,t}} \geq \frac{5}{6}\|p\|_2-2\nor{\xi_{n,t}}-\nor{\eta_{n} - (v \otimes \bar v)\eta_{n}},
\end{equation}
for all $v \in \Cal U(P)$ and all $n\in\mathbf N$.

Taking  ``$\Lim$'', an arbitrary limit along a non-principal ultrafilter, above and applying relation (1) we obtain
\begin{equation}\label{OP4.7} \sup_{s}\nor{(\tilde{\mathfrak m}_t(v)\otimes 1)( P_F\otimes 1)\zeta_{s,t}}\geq \frac{5}{6}\|p\|_2-2\Lim_n\nor{\xi_{n,t}},\end{equation}
for all $v\in \mathcal U(P)$. This shows that the limit $\Lim_n\|\xi_{n,t}\|\geq \frac{5}{12}\|p\|_2$. Indeed, since $P\not\preceq L^{\infty }(X)$, by Popa's intertwining techniques there exists a sequence of unitaries $v_s\in\mathcal U(P)$ such that for all $x,y\in M$ we have $\|E_{L^\infty(X)}(xv_ky)\|_2\ra 0$, as $k\ra \infty$. Applying inequality (\ref{OP4.7}) for each $v_k$ and taking the limit as $k\ra \infty$ then  Lemma \ref{improuvconv} implies that the left side of (\ref{OP4.7}) is $0$ and we get the desired conclusion.\end{proof}

Following the same argument as in Theorem B of \cite{OPCartanII}, we define a state $\psi_t$ on $\mathcal N =\mathfrak B(\mathcal
H)\cap\rho(M^{op})'$. Explicitly, $\psi_t(x)=\Lim_n\frac{1}{\|\xi_{n,t}\|^2}\langle (x\otimes 1)\xi_{n,t},\xi_{n,t}\rangle$ for every $x \in
\mathcal N $. Next we prove the following technical result

\begin{lem}\label{almostcomm1}For every $\varepsilon>0$ and every finite self-adjoint set $K\subset L^{\infty}(X)\rtimes_{\s,r} \G$ with $dist_{\|\cdot\|_2} (y,(N)_1)\leq \varepsilon$ for all $y\in K$ one can find $t_\varepsilon >0$ and a finite set $L_{K,\varepsilon}\subset  \mathcal N_M(P)$ such that
\begin{equation}\label{101}|\langle((yx-xy)\otimes 1)\xi_{n,t},\xi_{n,t}\rangle | \leq  10\varepsilon+2\sum_{v\in L_{K,\varepsilon}}\|[v\otimes \bar v,\eta_n]\| ,\end{equation} for all  $y\in K$, $\|x\|_\infty\leq 1$, $t_\varepsilon>t>0$, and $n$.\end{lem}

\begin{proof} Fix $\varepsilon>0$ and $y\in K$. Since $N=\mathcal N_M(P)''$ by the Kaplansky density theorem  there exists a finite set $F_y=\{v_i\}\subset \mathcal N_M(P)$ and scalars $\mu_i$
such that $\|\sum_i\mu_i v_i\|_{\infty}\leq 1$ and \begin{equation}\label{102}\|y-\sum_i\mu_i v_i\|_2\leq \varepsilon.\end{equation}

\noindent Also using Proposition \ref{deform-ineq} one can find a positive number $t_\varepsilon>0$ such that,  for all $t_\varepsilon>t>0$, we have
\begin{equation}\label{103}\begin{split}&\nor{(y - \a_{-t}(y))\cdot e}_\infty \leq \varepsilon;\\&\nor{(JyJ - \a_{-t}(JyJ))\cdot e}_\infty \leq \varepsilon.\end{split}
\end{equation}

 Next we will proceed in several steps to show inequality (\ref{101}). First we fix $t_\varepsilon>t>0$. Then, using the triangle inequality in combination with $\|x\|_{\infty}\leq 1$, (\ref{103}), and the $M$-bimodularity of $1-e=e^{\perp}$, we see that
\begin{equation*}\label{OP4.12}\begin{split}
& |\langle(x\otimes 1)\xi_{n,t},(y^*\otimes 1)\xi_{n,t}\rangle-\langle (x y\otimes
1)\xi_{n,t},\xi_{n,t}\rangle|\\
&\leq \| (\a_{-t}(y^*)-y^*)\cdot e\|_{\infty}+ |\langle(x\otimes
1)\xi_{n,t_\varepsilon},(e^{\perp}V_ty^*p\otimes 1)\eta_{n}\rangle-\langle (xy\otimes 1)\xi_{n,t},\xi_{n,t}\rangle|\\
&\leq \varepsilon+ |\langle(x\otimes 1)\xi_{n,t},(e^{\perp}V_{t}y^*p\otimes 1)\eta_{n}\rangle-\langle
(xy\otimes1)\xi_{n,t},\xi_{n,t}\rangle|\end{split}\end{equation*}

 Furthermore, the Cauchy-Schwarz inequality together with (3) and (\ref{102}) enable us to see that the last quantity above is smaller than

\begin{equation*}\begin{split}
&\leq \varepsilon+\|((y^*p-\sum_i \bar\mu_i v^*_i)p\otimes 1)\eta_n\|+|\sum_i \mu_i\langle(x\otimes
1)\xi_{n,t},(e^{\perp}V_{t} v^*_ip\otimes 1)\eta_{n}\rangle-\langle (xy\otimes 1)\xi_{n,t},\xi_{n,t}\rangle|\\
&\leq2 \varepsilon+|\sum_i  \mu_i\langle(x\otimes\bar v^*_i)\xi_{n,t},(e^{\perp}V_{t} pv^*_i\otimes \bar v^*_i)\eta_{n}\rangle-\langle (xy\otimes1)\xi_{n,t},\xi_{n,t}\rangle|\end{split}\end{equation*}

To this end we notice that, since $\eta_n$ is a positive vector and $J$ is an isometry then for all $z\in M$ we have \begin{equation}\label{1004}\|\eta_n(z\otimes 1)\|=\|J(z^*\otimes 1)J\eta_n\|=\|(z^*\otimes 1)\eta_n\|=\|z^*\|_2=\|z\|_2\end{equation} Using this identity in combination with (\ref{102}) and  $v_i$ being a unitary, we see that the last quantity above is smaller than

\begin{equation}\label{1003}\begin{split} &\leq 2\varepsilon+\sum_i\|[v^*_i\otimes \bar v^*_i,\eta_n]\|+|\sum_i
 \mu_i\langle(x \otimes \bar v^*_i)\xi_{n,t},(e^{\perp}V_{t} p\otimes 1)(\eta_nv^*_i\otimes \bar v^*_i)\rangle-\langle (xy\otimes 1)\xi_{n,t},\xi_{n,t}\rangle|\\
 &\leq 2\varepsilon+\sum_i\|[v^*_i\otimes \bar v^*_i,\eta_n]\|+|\langle(x \otimes 1)\xi_{n,t},(e^{\perp}V_{t} p\otimes 1)(\eta_n(\sum_i
 \bar \mu_iv^*_i)\otimes 1)\rangle-\langle (xy\otimes 1)\xi_{n,t},\xi_{n,t}\rangle|\\
 &\leq 3\varepsilon+\sum_i\|[v^*_i\otimes \bar v^*_i,\eta_n]\|+|\langle(x \otimes 1)\xi_{n,t},(e^{\perp}V_{t} p\otimes 1)(\eta_n(y^*\otimes 1))\rangle-\langle (xy\otimes 1)\xi_{n,t},\xi_{n,t}\rangle|\end{split}\end{equation}

Next we observe that  using the second part of (\ref{103}) and $V_t$ being a unitary we have \begin{eqnarray*}\begin{split}&\|V_{t} \otimes 1((p\otimes 1)\eta_n(y^*\otimes 1))-(V_{t}\otimes 1(( p\otimes 1)\eta_n))(y^*\otimes 1)\|\\
&=\|(V_{t}JyJ \otimes 1)(p\otimes 1)\eta_n-(JyJV_{t}\otimes 1)( p\otimes 1)\eta_n\|\\
&\leq\|(V_{t}JyJ \otimes 1)(p\otimes 1)\eta_n-(JyJV_{t}\otimes 1)( p\otimes 1)\eta_n\|\\
&\leq\|((JyJ -\a_{-t}(JyJ))\otimes 1)(p\otimes 1)\eta_n\|\\
&\leq\|(JyJ -\a_{-t}(JyJ))\cdot e\|_\infty\leq \varepsilon
\end{split}
\end{eqnarray*}

Therefore applying this estimate two times we see that the last expression in (\ref{1003}) is smaller that

 \begin{equation*}\begin{split} \label{1005}&\leq 4\varepsilon+\sum_i\|[v^*_i\otimes \bar v^*_i,\eta_n]\|+|\langle(x \otimes 1)\xi_{n,t},((e^{\perp}V_{t} p\otimes 1)\eta_n)(y^*\otimes 1)\rangle-\langle (xy\otimes 1)\xi_{n,t},\xi_{n,t}\rangle|\\
&= 4\varepsilon+\sum_i\|[v^*_i\otimes \bar v^*_i,\eta_n]\|+|\langle((x \otimes 1)\xi_{n,t})(y\otimes 1),(e^{\perp}V_{t} p\otimes 1)
\eta_n\rangle-\langle (xy\otimes 1)\xi_{n,t},\xi_{n,t}\rangle|\\
&\leq 5\varepsilon+\sum_i\|[v_i\otimes \bar v_i,\eta_n]\|+|\langle(xe^{\perp}V_{t}p\otimes 1)(\eta_{n}(y\otimes 1)),(e^{\perp}V_{t} p\otimes 1)\eta_n\rangle-\langle (xy\otimes 1)\xi_{n,t},\xi_{n,t}\rangle|\end{split}\end{equation*}

Using  (\ref{102}),  (\ref{1004}), $v_i$ being a unitary in combination with triangle inequality  we see that the last quantity above is smaller than

\begin{equation*}\begin{split}&\leq 6\varepsilon+\sum_i\|[v_i\otimes \bar v_i,\eta_n]\|+\\
&|\sum_i\mu_i\langle(xe^{\perp}V_{t}p\otimes 1)(\eta_{n}(v_i\otimes \bar v_i)),(e^{\perp}V_{t} p\otimes \bar v_i)\eta_n)\rangle-\langle (xy\otimes 1)\xi_{n,t},\xi_{n,t}\rangle|\\
&\leq 6\varepsilon+2\sum_i\|[v_i\otimes \bar v_i,\eta_n]\|+\\
&|\sum_i\mu_i\langle(xe^{\perp}V_{t}p\otimes 1)(v_i\otimes \bar v_i)\eta_{n},(e^{\perp}V_{t} p\otimes \bar v_i)\eta_n\rangle-\langle (xy\otimes 1)\xi_{n,t},\xi_{n,t}\rangle|\\
&= 6\varepsilon+2\sum_i\|[v_i\otimes \bar v_i,\eta_n]\|+\\
&\langle(xe^{\perp}V_{t}p(\sum_i\mu_iv_i)\otimes 1)\eta_{n},(e^{\perp}V_{t} p\otimes 1)\eta_n\rangle-\langle (xy\otimes 1)\xi_{n,t},\xi_{n,t}\rangle|\end{split}\end{equation*}

Then (\ref{102}) together with (3), the Cauchy-Schwarz inequality, and $\|x\|_\infty\leq 1$ show that the last quantity above is smaller than

\begin{equation*}\begin{split} &\leq 7\varepsilon+2\sum_i\|[v_i\otimes \bar v_i,\eta_n]\|+\langle(xe^{\perp}V_{t}py\otimes 1)\eta_{n},(e^{\perp}V_{t} p\otimes 1)\eta_n\rangle-\langle (xy\otimes 1)\xi_{n,t},\xi_{n,t}\rangle\\
&\leq 7\varepsilon+2\sum_i\|[v_i\otimes \bar v_i,\eta_n]\|+\|((V_{t}py- yV_tp)\otimes 1)\eta_n\|\end{split}\end{equation*}

Finally, using  (\ref{103}) together with (3) and the initial assumption $dist_{\|\cdot\|_2} (y,(N)_1)\leq \varepsilon$ we obtain that the last expression above is smaller than

\begin{equation*}\begin{split}&\leq 7\e+2\sum_i\|[v_i\otimes \bar v_i,\eta_n]\|+\|((V_{t}(py- yp))\otimes 1)\eta_n\|+\|((V_{t}y-yV_t)\otimes 1)(p\otimes 1)\eta_n)\|\\
&\leq 7\varepsilon+2\sum_i\|[v_i\otimes \bar v_i,\eta_n]\|+\|((py- yp)\otimes 1)\eta_n\|+\|(y-\a_{-t}(y))\cdot e\|_{\infty} \\
& \leq 8\varepsilon+2\sum_i\|[v_i\otimes \bar v_i,\eta_n]\|+\|py- yp\|_2\\
& \leq 10\varepsilon+2\sum_i\|[v_i\otimes \bar v_i,\eta_n]\|.\end{split}\end{equation*}

In conclusion, (\ref{101}) follows from the previous inequalities  by taking $L_{K,\varepsilon}=\cup_{y\in K} F_y$.\end{proof}

\begin{lem}\label{almostcomm2}For every $\varepsilon>0$ and any finite set $F_0\subset \mathcal U(N)$ there exist a finite set $F_0\subset F\subset M$, a c.c.p. map  $\varphi_{F,\varepsilon}: span (F)\ra L^{\infty}(X)\rtimes_{\s,r} \G $, and $t_\varepsilon>0$ such that
\begin{equation}\label{OP4.16}|\psi_{t_\varepsilon}(\varphi_{F,\varepsilon}(up)^*x\varphi_{F,\varepsilon}(up))-\psi_{t_{\varepsilon}}(x)|\leq  116\varepsilon,\end{equation} for all $u\in F_0$ and $\|x\|_\infty\leq 1$.
\end{lem}

\begin{proof} Fix $\varepsilon>0$. Denote by $F=\{up,u^*p\}\cup F_0\cup F^*_0 $ and $E= span (F)$. By local reflexivity, we may choose a c.c.p.\ map $\vp_{F,\varepsilon}: E\to  L^{\infty}(X)\rtimes_{\s,r} \G$ such
that for all $u\in F$
\begin{equation}\label{104}\nor{\vp_{F,\varepsilon}(up)-up}_2\leq \varepsilon.\end{equation}
This shows in particular that $dist_{\|\cdot\|_2}(\vp_{F,\varepsilon}(up), (N)_1)\leq \varepsilon$ for all $u\in F$. Therefore, applying the previous lemma for
$K=\{\vp_{F,\varepsilon}(up) : u\in F  \}\subset  L^{\infty}(X)\rtimes_{\s,r} \G$, there exists a $t_\varepsilon>0$ and a finite set $K'\subset \mathcal N_M(P)$ such that,  for all
$u\in F$, all $\|x\|_{\infty}\leq 1$, and all $n$, we have

\begin{equation}\label{106}|\langle((\varphi_{F,\varepsilon}(up)^*x\varphi_{F,\varepsilon}(up)-x\varphi_{F,\varepsilon}(up)\varphi_{F,\varepsilon}(up)^*)\otimes 1)\xi_{n,t_\varepsilon},\xi_{n,t_\varepsilon}\rangle |\leq 10\varepsilon+ 2\sum_{v\in K'}\|[v\otimes \bar v,\eta_n]\|\end{equation}

\noindent Also using Proposition \ref{deform-ineq}, after shrinking $t_\varepsilon$ if necessary, we can assume in addition that for all $u\in F$ we
have

\begin{equation}\label{105}\nor{(\vp_{F,\varepsilon}(up) - \a_{-t_\varepsilon}(\vp_{F,\varepsilon}(up)))\cdot e}_\infty \leq \varepsilon.
\end{equation}

 Hence, using triangle inequality together with (\ref{106}) and the Cauchy-Schwarz inequality, we have that

 \begin{equation*}\begin{split} & |\langle\left(\varphi_{F,\varepsilon}(up)^*x\varphi_{F,\varepsilon}(up)\otimes 1\right)\xi_{n,t_\varepsilon},\xi_{n,t_\varepsilon}\rangle-\langle (x\otimes 1)\xi_{n,t_\varepsilon},\xi_{n,t_\varepsilon}\rangle| \\&\leq
10\varepsilon+2\sum_i\|[v\otimes \bar v,\eta_n]\|+|\langle(x(\varphi_{F,\varepsilon}(up)\varphi_{F,\varepsilon}(up)^*-1)\otimes 1)\xi_{n,t_\varepsilon},\xi_{n,t_\varepsilon}\rangle\\
&\leq 10\varepsilon+2\sum_v\|[v\otimes \bar v,\eta_n]\|+ \|x\|_\infty \|(\varphi_{F,\varepsilon}(up)\varphi_{F,\varepsilon}(up)^*-1)\otimes 1)\xi_{n,t_\varepsilon}\|\\
&\leq 10\varepsilon+2\sum_v\|[v\otimes \bar
v,\eta_n]\|+2\|(\a_{-t_\varepsilon}(\vp_{F,\varepsilon}(up))-\vp_{F,\varepsilon}(up))\cdot e\|_{\infty}\\&\quad+\|(V_{t_\varepsilon}(\varphi_{F,\varepsilon}(up)\vp_{F,\varepsilon}(up)^*-1)p\otimes
1)\eta_{n}\|\end{split}\end{equation*}

\noindent Furthermore, using (\ref{104}) together with the Cauchy-Schwarz inequality, (3) and (\ref{105}) we see that the last quantity above is
smaller than

\begin{equation*}\begin{split}
&\leq 12\varepsilon+2\sum_v\|[v\otimes \bar v,\eta_n]\| +\|((\varphi_{F,\varepsilon}(up)\vp_{F,\varepsilon}(up)^*-1)p\otimes 1)\eta_{n}\|\\
&\leq 12\varepsilon+2\sum_v\|[v\otimes \bar v,\eta_n]\| +\|\varphi_{F,\varepsilon}(up)\vp_{F,\varepsilon}(up)^*-p\|_2\\
&\leq 12\varepsilon+2\sum_v\|[v\otimes \bar v,\eta_n]\| +2\|\varphi_{F,\varepsilon}(up)-up\|_2\\
&\leq 20\varepsilon+2\sum_v\|[v\otimes \bar v,\eta_n]\|.
\end{split}
\end{equation*}

Altogether, the above sequence of inequalities shows that
\begin{equation*}
|\langle(\varphi_{F,\varepsilon}(up)^*x\varphi_{F,\varepsilon}(up)\otimes 1)\xi_{n,t_\varepsilon},\xi_{n,t_\varepsilon}\rangle-\langle (x\otimes
1)\xi_{n,t_\varepsilon},\xi_{n,t_\varepsilon}\rangle|\leq 20\varepsilon+2\sum_{v\in K'}\|[v\otimes \bar v,\eta_n]\|,
\end{equation*}
and combining this with (2) and Lemma \ref{OP4.11} we obtain
\begin{equation*}\begin{split}|\psi_{t_\varepsilon}(\varphi_{F,\varepsilon}(up)^*x\varphi_{F,\varepsilon}(up))-\psi_{t_{\varepsilon}}(x)|&\leq \Lim_n\left ( \frac{20\varepsilon+2\sum_v\|[v\otimes \bar v,\eta_n]\|}{\|\xi_{n,t_\varepsilon}\|^2}\right )\\ & \leq \frac{20\varepsilon}{\left (\frac{5}{12}\right)^{2}}<116\varepsilon,\end{split}\end{equation*}
which finishes the proof.\end{proof}

For the remaining part of the proof we mention that one can use Haagerup criterion to show that $Np$ is amenable. In fact the reasoning in Theorem B in \cite{OPCartanII} applies verbatim in our case and we  leave the details to the reader.\end{proof}

\begin{proof}[Proof of Theorem \ref{strongsolidity}] Let $\G$ be an i.c.c.\ group which is weakly amenable and admits a proper quasi-cocycle into the left-regular representation, and consider $A\subset L\G=M$ a diffuse amenable subalgebra. By Theorem B in \cite{OzCBAP} it follows that $A$ is weakly compact in $L\G$. Also, weak amenability implies that $\G$ is exact, cf.\ Theorem 12.4.4 in \cite{BrOz}. Hence, applying the previous theorem for the case when $X$ consists of a point, we obtain that either $\mathcal N_M(A)''$ is amenable or $A\preceq_M \mathbb C$. Since $A$ is diffuse the second case is impossible and therefore $L\G$ is strongly solid.
\end{proof}

\begin{proof}[Proof of Corollary B.\ref{Sp}] In the case that $\G$ is hyperbolic, a result of Ozawa shows that $\G$ is weakly amenable \cite{OzHyp}.
In the case that $\G$ is a lattice in ${\rm Sp}(n,1)$, choose a co-compact lattice $\La < {\rm Sp}(n,1)$. We have that $\La$ is Gromov
hyperbolic; hence, by \cite{MMS} $\La$ admits a proper quasi-cocycle in to the left-regular representation. A result of Shalom (Theorem 3.7 in \cite{Shalom}) shows that
$\G < {\rm Sp}(n,1)$ is integrable, thus $\ell^1$-measure equivalent to $\La$. As explained in the proof of Proposition \ref{prop:qh_permanence}, item (4), this implies that $\G$ also admits a proper quasi-cocycle into the left-regular representation. The work of
Cowling and Haagerup \cite{CoHa} shows that ${\rm Sp}(n,1)$ is weakly amenable, which implies, by an unpublished result of Haagerup, that any
lattice in ${\rm Sp}(n,1)$ is also weakly
 amenable (cf.\ \cite{OzCBAP}). Therefore, the hypotheses of Theorem \ref{strongsolidity} are satisfied.

If $\G$ is an exact, weakly amenable group which admits a proper quasi-cocycle into a weakly-$\ell^2$ representation, then for any profinite, free, ergodic measure-preserving action $\G \ca X$ on a standard probability space, $M = L^\infty(X)\rtimes \G$ is a weakly amenable ${\rm II}_1$ factor. If $A\subset M$ is a Cartan subalgebra, then the normalizing algebra $\mathcal N_M(A)''$ is obviously non-amenable and therefore, by Theorem \ref{controlweakembedding}, we must have that $A\preceq_M L^\infty(X)$. Hence, by Appendix A of \cite{PoBe}, there exists $u\in \Cal U(M)$ such that $uAu^* = L^\infty(X)$. Next, if $\La\ca Y$ is a free, ergodic measure-preserving action of a countable discrete group $\La$ on a standard probability space $Y$ such that $\theta: L^\infty(Y)\rtimes \La \ra L^\infty(X)\rtimes \G$ is an isomorphism of von Neumann algebras, then we may assume that $\theta(L^\infty(Y)) =L^\infty(X)$. In particular, the actions $\G\ca X$ and $\La\ca Y$ are orbit equivalent and by Theorem A of \cite{IoaCSR} it follows that $\G\ca
X$ and $\La\ca Y$ are virtually conjugate.
\end{proof}

\begin{proof}[Proof of Corollary B.\ref{generalizadams}] Let $\G\ca X$ and $\Lambda \ca Y$ be two orbit equivalent actions. Therefore one can find an isomorphism $\theta: M =L^\infty(Y)\rtimes \La\ra L^\infty (X)\rtimes \G$ such that $\theta(L^\infty(Y))=L^\infty (X)$. Let $\Sigma< \Lambda$ be an infinite amenable subgroup and we assume by contradiction that its normalizing group $N_\La(\Sigma)$ is non-amenable. From the assumption it follows that $\La$ is weakly amenable and therefore the action by conjugation of $N_\La(\Sigma)$ on $L\Sigma$ is weakly compact and so is the action by conjugation of $\theta (N_\La(\Sigma))$ on $\theta(L\Sigma)$ \cite{OzCBAP}. Since $N_\La(\Sigma)$ is non-amenable the previous theorem implies that $\theta (L\Sigma)\preceq_M L^\infty (X)$ and since $\theta(L^\infty(Y))=L^\infty (X)$ this is equivalent to $L\Sigma\preceq L^\infty (Y)$. This however is impossible. Indeed, by intertwining techniques this implies that one can find finitely many elements $x_i, y_i\in M $  and $C>0$ such that

\begin{equation}\label {112}\sum_i\|E_{L^\infty(Y)}(x_ivy_i)\|^2_2\geq C, \text{  for all }v \in  \mathcal U(L\Sigma). \end{equation}

By shrinking $c$ a little we can assume that for all $i$ the elements $x_i$ and $y_i$ have finite supports in $\G$. Therefore the union $F\subset \G$ of all these supports is still a finite set and so is $F^{-1}F^{-1}$. Since $\Sigma$ is infinite one can find  $\g\in \Sigma\setminus F^{-1}F^{-1}$. A simple computation shows that all elements $x_iu_\g y_i$ are supported on elements different than the identity and hence  $E_{L^\infty(Y)}(x_ivy_i)=0$ which contradicts  (\ref{112}).  \end{proof}
\begin{remark} Note that the same proof above shows that all i.c.c.\ groups in the orbit equivalence class of an i.c.c.\ hyperbolic group give rise to strongly solid factors. This should be compared with an observation of the second author and Peterson (Remark 6.4 in \cite{PeSi}) on orbit-equivalence and strong solidity of group factors for weakly amenable groups with the ``$L^2$-Haagerup property''.\end{remark}

\section{The proof of Theorem \ref{prime} and corollary} This last section in the main body of the paper contains the proof of Theorem \ref{prime} on the uniqueness of decompositions of group von Neumann algebras of products of groups in $\Cal {QH}_{\rm reg}$ into prime factors. Our proof is designed to circumvent a technical subtlety in the proof of Theorem 6.1 in \cite{PetL2} on the norm estimates of fusion products of certain vectors $\tilde\delta^i_\alpha(x^i_\alpha)$---specifically,  whether these vectors are uniformly bounded from below, so that the estimate ``$\nor{\xi_\alpha}_2 \geq c^m$'' is achieved.  Essentially, we will be using Theorem \ref{solidity1} together with the fact that there is a well-defined way to take a tensor product of arrays as explained in Proposition \ref{productarrays}.

\begin{proof}[Proof of Theorem \ref{prime}] Notice that via a canonical isomorphism we can view $M =  L\G_1\oo \dotsb\oo L\G_n=L \G$ where $\G= \G_1\times\G_2\times \cdots\times \G_n$. By assumption, for each $1\leq i\leq n$, there exists an array $q_i :  \G_i\to  \mathcal H_i$ into some weakly-$\ell^2$ unitary representation $\pi_i : \G_i\ra \mathcal U ( \mathcal H_i)$. Consider the tensor product representation $\pi : \G\ra \mathcal U(\mathcal H)$,  where $\mathcal H = \mathcal H_1\otimes   \mathcal H_2\otimes\cdots\otimes  \mathcal H_n$ and $\pi=\pi_1\otimes \pi_2\otimes \cdots \otimes \pi_n$, and notice that since $\mathcal H_i$ is weakly-$\ell^2$ then $\mathcal H$ is weakly-$\ell^2$ for $\G$. Setting $\hat\G_i$ to be the kernel of the canonical projection from $\G$ onto $\G_i$, using Proposition \ref{productarrays} inductively, one can construct an array $ q: \G\to \mathcal H$ which is proper with respect to the family $\{\hat \G_i : 1\leq i\leq n\}$.

Now, suppose that $B \subset L\G$ is a ${\rm II}_1$ subfactor whose relative commutant $N = B' \cap L\G$ is a non-amenable factor. Therefore by Theorem \ref{solidity1} there exists $1\leq j\leq n$ such that $B \preceq_M L\hat \G_j$. The result then follows by appealing to Proposition 12 of \cite{OPPrime}.\end{proof}

For the remaining corollary we fix the following notation. Given a subset  $F\subset \{1,\dotsc, n\}$, we denote by $\hat \G_F$ the subgroup of $\G=\G_1\times \dotsb \times \G_n$ which consists of all elements with trivial $i$-th coordinate, for all $i\in F$.

\begin{proof}[Proof of Corollary C \ref{meprime}] Suppose that $m\leq n$. Since $\G_1\times \dotsb \times  \G_n\sim_{ME} \La_1\times \dotsb \times \La_m$, there exists an isomorphism $\psi: A\rtimes (\G_1\times \cdots \times \G_n)\ra (B\rtimes (\La_1\times \cdots \times \La_m))^t$ such that $\psi(A)=B^t$. For simplicity, we will assume that $t=1$, and we denote $\G=\G_1\times \cdots \times \G_n$, $\La =\La_1\times\cdots\times \La_m$, $M= A\rtimes \G$, and $N=  B\rtimes \La$. Also, throughout the proof, for every subset $F\subset \{1,\dotsc, n \}$ and $K\subset \{1,\ldots,m\}$, we define $\hat M_F=A\rtimes \hat\G_F$ and $\hat N_K=B\rtimes \hat \La_K$.

To begin, we prove that for every proper subset $F\subset \{1,\dotsc, n \}$ there exists $K\subset \{1,\dotsc, m \}$ such that $\abs{F}= \abs{K}$ and \begin{equation}\label{500}\phi(L(\hat \G_F))\preceq_N \hat N_K.\end{equation}

First, we notice that the same argument as in the proof of previous theorem shows that if $P\subset N$ is a diffuse subfactor such that there exists a non-amenable subfactor $Q\subset P'\cap N$, then one can find $1\leq l\leq m$ such that $P\preceq_N \hat N_{l}$. In particular, this shows our claim when $F$ consists of a single element. To get the general case we will proceed by induction on the cardinality of $F$. To explain the inductive step, fix a proper subset $F$ of $\{1,\ldots, n\}$ together with an element  $k\in F$.  By assumption, there exists $K'\subset \{1,\ldots, m\}$ with $\abs{K'} = \abs{F}-1$ such that  \begin{equation}\label{506}\phi(L(\hat \G_{F\setminus \{k\}}))\preceq_N \hat N_{K'}.\end{equation}

Therefore, since all $\G_i$ are i.c.c., one can find projections $p_1\in \phi(L(\hat \G_{F}))$, $p_2\in \phi(L(\G_{k}))$, $q\in \hat N_{K'}$, and an injective homomorphism \[\theta: (p_1\otimes p_2)\,\phi(L(\hat \G_{F\setminus \{k\}}))\,(p_1\otimes p_2)\ra  q\hat N_{K'}q.\] Next, we notice that $\theta(p_1\otimes p_2\,\phi(L(\hat \G_{F}))\,p_1\otimes p_2)$ and $\theta(p_1\otimes p_2\,\phi(L(\G_k)\,p_1\otimes p_2)$ are non-amenable, commuting subfactors of $\theta(p_1\otimes p_2)\, \hat N_{K'}\,\theta(p_1\otimes p_2)$; thus, applying the same argument as before, there exists an element $j\in \{1,\ldots, m\}\setminus K'$ such that  \begin{equation}\label{507}\theta(p_1\otimes p_2\,\phi(L(\hat \G_{F}))\,p_1\otimes p_2)\preceq_{\hat N_{K'}} \hat N_{K'\cup \{j \}}.\end{equation} Finally, by Remark 3.8 in \cite{vaes}, relations (\ref{506}) and (\ref{507}) imply that  $\phi(L(\hat \G_{F}))\preceq_N \hat N_{K'\cup \{j \}}$, which concludes the inductive step and the proof of (\ref{500}).

Notice that (\ref{500}) automatically implies that $m=n$. Indeed, if $m\geq n+1$, then applying the statement for any subset $F\subset \{1,\dotsc, n \}$ with $\abs{F} =m-1$, we get that $\phi(L(\hat \G_F))\preceq_N B\rtimes \La_l$ for some $1\leq l\leq m$ which obviously contradicts Theorem \ref{solidity1}.
Also, (\ref{500}) implies that for every $1\leq i\leq n$ there exists $1\leq \pi(i)\leq n$ such that $\phi(L( \G_i))\preceq_N B\rtimes \La_{\pi(i)}$.
Notice that since $\phi(A)=B$, for all $1\leq i\leq n$, we also have that  \begin{equation}\label{501}\phi( A \rtimes  \G_i)\preceq_N B\rtimes \La_{\pi(i)}.\end{equation}

Applying the same procedure for $\phi^{-1}$, for every $1\leq i\leq n$, one can find $1\leq\rho(\pi(i))\leq n$ such that  $\phi^{-1}( B \rtimes  \La_{\pi(i)})\preceq_M A\rtimes \G_{\rho( \pi(i))}$; equivalently, \begin{equation}\label{505}B \rtimes  \La_{\pi(i)}\preceq_N \phi(A\rtimes \G_{\rho( \pi(i))}),\end{equation} for all $1\leq i\leq n$. Combining this with (\ref{501}) and using that $\phi(A\rtimes \G_i)$ is an irreducible subfactor of $N$, we obtain $ \phi(A\rtimes \G_{i})\preceq_N\phi(A\rtimes \G_{\rho( \pi(i))})$. In particular, this implies that $\rho(\pi(i))=i$, for all $1\leq i\leq n$; hence, $\pi$ is permutation of $\{1,\ldots,n\}$. Therefore, using (\ref{501}) and (\ref{505}) together with Proposition 8.4 in \cite{ipp}, one can find unitaries $u_i\in \mathcal U(N)$ such that \begin{equation}\label{504}u_i\phi( A \rtimes  \G_i)u^*_i= B\rtimes \La_{\pi(i)}.\end{equation} This further gives that $\phi_{u_i}={\rm Ad}(u_i)\circ\phi$ is an isomorphism from $A\rtimes \G_i$ onto $B\rtimes \La_{\pi(i)}$  which satisfies
\begin{equation*}\phi_{u_i}(a)u_i=u_i\phi(a),\end{equation*}
for all $a\in A$.

Next, for $N = B\rtimes (\La_{\pi(i)}\times \hat \La_{\pi(i)})$, we consider the Fourier decomposition $u=\sum_{\lambda\in \hat \La_{\pi(i)}} y_\lambda v_\lambda$ with $y_\lambda \in B\rtimes \La_{\pi(i)}$ and, using the above equation, there exists a nonzero element $y_\lambda\in B\rtimes \La_{\pi(i)}$ such that for all $a\in A$ we have
\begin{equation}\label{503}\phi_{u_i}(a)y_\lambda=y_\lambda \delta _\lambda (\phi(a)).\end{equation}
Note that since $B=\phi(A)$ is a maximal abelian subalgebra of $N$, (\ref{503}) implies that $y^*_\lambda y_\lambda \in B$. Furthermore, taking the polar decomposition $y_\lambda =w_\lambda\abs{y_\lambda}$ with $w_\lambda$ a partial isometry, we conclude that

\begin{equation*}\phi_{u_i}(a)w_\lambda=w_\lambda \delta_\lambda (\phi(a)),\end{equation*}
for all $a\in A$.
This shows in particular $\phi_{u_i} (A)\prec_{B\rtimes \La_{\pi(i)}} \phi(A)$, and since $B=\phi(A)$ and $\phi_{u_i}(A)$ are Cartan subalgebras of $B\rtimes \La_{\pi(i)}$, then by Theorem A2 in \cite{PoBe} there exists a unitary $u'_i\in B\rtimes \La_{\pi(i)}$ such that \[u'_i\psi_u(A){u'_i}^*=\phi(A)=B.\] Finally, letting $x_i=u'_i u_i\in \mathcal N_N(B)$, by (\ref{504}) the map ${\rm Ad}(x_i)\circ \phi$ implements an isomorphism between $A\rtimes \G_i$ and $B\rtimes \La_{\pi(i)}$, identifying the Cartan subalgebras $A$ and $B$; thus, $\G_i \sim_{ME} \La_{\pi(i)}$ for all $1\leq i\leq n$. \end{proof}

\appendix

\section{Amenable Actions, Exactness, and Local Reflexivity}\label{sec:biexact}

\begin{definition}[Anantharaman-Delaroche and Renault \cite{ADRen}, cf.\ \cite{HiRo}]\label{defn:amenable} Let $\G$ be a countable discrete group and $\G \ca X$ be an action of $\G$ by homeomorphisms on a compact topological space $X$. The action $\G \ca X$ is said to be \textit{amenable} if there exists a sequence $(\xi_n)$ of continuous maps $\xi_n: X\to \ell^2(\G)$ such that $\xi_n\geq 0$, $\nor{\xi_n(x)}_2 = 1$, for all $x\in X$, $n\in \bb N$, and
\begin{equation}\label{eq:amenable} \sup_{x\in X}\nor{\la_\g(\xi_n(x)) - \xi_n(\g x)}_2\to 0,
\end{equation}
for all $\g\in \G$.
\end{definition}

\begin{prop}[Higson and Roe \cite{HiRo}]\label{defn:propA} A countable discrete group $\G$ has Guoliang Yu's \textit{property A} \cite{YuA} if and only if $\G$ acts amenably on its Stone--\v{C}ech boundary
$\beta'\G = \beta\G \setminus \G$.
\end{prop}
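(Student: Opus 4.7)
The plan is to reduce the equivalence to two well-established translations between coarse/dynamical data and operator-algebraic data. First, I would invoke Anantharaman-Delaroche's theorem characterizing amenability of an action $\G\ca X$ of a countable discrete group on a compact Hausdorff space by the nuclearity of the reduced crossed product $C(X) \rtimes_r \G$. Applied to $X = \beta'\G$, this converts ``$\G \ca \beta'\G$ is amenable'' into ``$C(\beta'\G) \rtimes_r \G$ is nuclear.'' Second, I would invoke the characterization, due to Guentner-Kaminker and Ozawa, that $\G$ has property A if and only if the uniform Roe algebra $C_u^*(\G) \cong \ell^\infty(\G) \rtimes_r \G$ is nuclear.

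With these two dictionary entries in hand, the equivalence becomes essentially formal. The $\G$-equivariant short exact sequence $0 \to c_0(\G) \to \ell^\infty(\G) \to C(\beta'\G) \to 0$ induces the short exact sequence of reduced crossed products
\begin{equation*}
0 \to c_0(\G) \rtimes_r \G \to \ell^\infty(\G) \rtimes_r \G \to C(\beta'\G) \rtimes_r \G \to 0.
\end{equation*}
Because $\G$ acts on itself by left translation freely and transitively, the ideal $c_0(\G) \rtimes_r \G$ is canonically isomorphic to the nuclear C*-algebra $\mathcal K(\ell^2(\G))$. Nuclearity passes to quotients of C*-algebras and, when the ideal is nuclear, also to extensions (via the Choi-Effros lifting theorem). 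Hence $\ell^\infty(\G) \rtimes_r \G$ is nuclear if and only if $C(\beta'\G) \rtimes_r \G$ is nuclear, and invoking the two dictionary entries translates this into the desired equivalence.

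The main obstacle is the Guentner-Kaminker/Ozawa characterization of property A via nuclearity of the uniform Roe algebra. The heart of that proof is the bijection between finite-propagation positive definite kernels on $\G$ (witnessing property A) and finite-rank completely positive approximations of $C_u^*(\G)$ (witnessing nuclearity): from property A kernels one builds nuclear approximations via Schur-multiplier constructions analogous to the $\fr m_t$ constructed in Section \ref{sec:Roe}, and conversely one extracts kernels by testing approximations against matrix units supported near the diagonal. The remaining pieces---Anantharaman-Delaroche's theorem and the extension argument above---are more formal in nature and appear in standard references such as \cite{BrOz}.
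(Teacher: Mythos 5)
The paper simply cites Higson and Roe \cite{HiRo} for this proposition and supplies no proof of its own; the argument there is combinatorial, passing between property-A data on $\G$ and amenability data on $\beta'\G$ by way of the universal property of $\beta\G$ (cf.\ \cite{BrOz}, Chapter 5). Your route through nuclearity is a genuine alternative, and the two dictionary entries you invoke (Anantharaman-Delaroche for amenable actions, Guentner--Kaminker and Ozawa for property~A) are the right ones. The gap is in the middle step: you assert that the sequence $0\to c_0(\G)\to\ell^\infty(\G)\to C(\beta'\G)\to 0$ of $\G$-algebras ``induces'' a short exact sequence of \emph{reduced} crossed products. Reduced crossed products do not preserve exactness in general; by the Kirchberg--Wassermann theorem, $\G$ preserves exactness of \emph{all} such sequences precisely when $\G$ is an exact group, and for countable discrete groups exactness coincides with property~A---the very property being characterized. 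What is automatic is the containment $\mathcal K(\ell^2(\G)) = c_0(\G)\rtimes_r\G\subseteq\ker\bigl(C_u^*(\G)\to C(\beta'\G)\rtimes_r\G\bigr)$; the reverse containment says that every ``ghost'' operator in the uniform Roe algebra is compact, which is not known to hold without property~A. So in the direction ``amenable boundary action $\Rightarrow$ property~A,'' your extension argument presupposes part of what is to be proved.

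The cleanest repair routes through the \emph{full} crossed product, which always preserves exactness. If $\G\ca\beta'\G$ is amenable, then $C(\beta'\G)\rtimes\G = C(\beta'\G)\rtimes_r\G$ is nuclear; since $c_0(\G)\rtimes\G = \mathcal K(\ell^2(\G))$ is nuclear and nuclearity is closed under extensions, $\ell^\infty(\G)\rtimes\G$ is nuclear, hence so is its quotient $C_u^*(\G) = \ell^\infty(\G)\rtimes_r\G$. (Your forward direction---property~A yields nuclearity of $C_u^*(\G)$, and $C(\beta'\G)\rtimes_r\G$ is in any case a quotient of $C_u^*(\G)$---is fine as written and uses no exact-sequence claim.) An alternative repair is to use $C_u^*(\G)=C(\beta\G)\rtimes_r\G$ directly and show that amenability of $\G\ca\beta'\G$ is equivalent to amenability of $\G\ca\beta\G$; but extending the amenability data from the corona $\beta'\G$ to all of $\beta\G$ is not purely formal, because the unit ball of $\ell^2(\G)$ is not norm-compact, and this is essentially where the content of the Higson--Roe argument lies.
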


\noindent Property A is equivalent, cf. \cite{Roe}, to the nuclearity of ${\rm C}_u^\ast(\G)$ which is, in turn, equivalent to the exactness of
${\rm C}_\la^\ast(\G)$ by a result of Ozawa \cite{OzExact}.

\begin{definition}\label{defn:locreflex} A $\rm C^\ast$-algebra $A$ is said to be \emph{locally reflexive} if for every finite-dimensional operator system $E \subset A^{**}$, there exists a net $(\vp_i)_{i\in I}$ of contractive completely positive (c.c.p.) maps $\vp_i: E\to A$ which converges to the identity in the pointwise-ultraweak topology.
\end{definition}

For the purposes of this paper, the crucial property implied by exactness is that ${\rm C}_\la^\ast(\G)$ is a locally reflexive ${\rm
C}^\ast$-algebra, cf.\ \cite{BrOz}, Chapter 9.\\

\section{A proof of Proposition \ref{prop:SL}}\label{sec:SL}

The aim of this appendix is to provide an elementary proof that $\G = \bb Z^2\rtimes {\rm SL}(2,\bb Z)$ belongs to the class $\Cal{QH}_{\rm reg}$. Appealing to
Theorem \ref{solidity} then furnishes an alternate proof of the solidity of $L\G$, the main result of \cite{OzExample}. As in \cite{OzExample},
our proof will make use of the amenability of the natural action of ${\rm SL}(2,\bb Z)$ on ${\rm SL}(2,\bb R)/T \cong \bb RP^1$, where $T$ is
the group of upper-triangular $2\times 2$ real matrices.

To begin, note that $\G_0 = {\rm SL}(2,\bb Z)$ admits a proper cocycle $b: \G_0\to \ell^2(\G_0)$ with respect to the left-regular
representation. By Proposition \ref{prop:symm-array}, we may replace $b$ with a proper, symmetric array $b'$ into the left-regular representation. Let $\pi$ be the representation of $\G$ on $\ell^2(\G_0)$ obtained by pulling the left-regular representation of $\G_0$ back
along the quotient $\G\twoheadrightarrow \G/\bb Z^2 \cong \G_0$, so that $\pi$ is weakly contained in the left-regular representation of $\G$.
Let $p: \bb Z^2\setminus\{(0,0)\}\to \bb RP^1$ be the projection defined by $p((x,y)) = x/y$, and note that $p$ is equivariant with respect to
the natural actions of ${\rm SL}(2,\bb Z)$ on $\bb Z^2$ and $\bb RP^1$.

Given a sequence of continuous maps $\xi_n: \bb RP^1\to \ell^2(\G_0)$ satisfying Definition \ref{defn:amenable}, define the maps $\xi_n': \bb
Z^2\to \ell^2(\G_0)$ by \[\xi_n'(z) = \xi_n(p(z)),\] for $z = (z_1,z_2)\in \bb Z^2\setminus \{(0,0)\}$, and $\xi_n'(z) = 0$,
otherwise. Note that for any $a\in \bb Z^2$ we have
\begin{equation}\label{eq:limsup} \limsup_{z\to \infty}\nor{\xi_n'(z) - \xi_n'(z+a)}_2 = 0,
\end{equation}
for all $n\in \bb N$.

Now, consider finite, symmetric generating subsets $S'\subset \G_0$ and $S''\subset \bb Z^2$. Define $S_1 = S' \cup S''$ and $S_{k+1} = S_k
\cup (S_1)^{k+1}$ for all $k\in \bb N$. By equations \ref{eq:amenable} and \ref{eq:limsup}, there exists an increasing sequence of finite,
symmetric subsets $F_1\subset F_2\subset \dotsb \subset F_k\subset \dotsb \subset \bb Z^2$ such that $\bigcup^{\infty}_{k=1} F_k =\mathbb Z^2$
and a subsequence $(n_k)$ such that
\begin{equation} \sup_{s\in S_k}\sup_{g\in \bb Z^2\setminus F_k}\nor{\pi_s(\xi_{n_k}'(g)) - \xi_{n_k}'(s\cdot g)}_2\leq \frac{1}{2^k},
\end{equation}
where $s\cdot g$ is the natural $\G$-action on $\bb Z^2$. Define a map $\del: \bb Z^2\to \ell^2(\bb N; \ell^2(\G_0)) = \Cal H$ by $\del(z)(k) =
\xi_{n_k}'(z)$, if $z\not\in F_k$, and $0$, otherwise. It is then straightforward to check that $\del$ is proper, symmetric, and boundedly
$\G$-equivariant. For $(z,\g)\in \bb Z^2\rtimes {\rm SL}(2,\bb Z)$ we define the map $q((z,\g)) = b'(\g)\oplus \del(z)\in \ell^2(\G_0)\oplus
\Cal H$. It is easy to see that $q$ is a proper, symmetric array into the weakly-$\ell^2$ representation $\pi\oplus \pi^{\oplus\infty}$. Thus, $\bb Z^2\rtimes
{\rm SL}(2,\bb Z)\in \Cal{QH}_{\rm reg}$ and we are done.

\begin{question} Does $\bb Z^2\rtimes {\rm SL}(2,\bb Z)$ admit a proper, \emph{anti-symmetric} array into a weakly-$\ell^2$ representation?
\end{question}

\bibliographystyle{amsplain}

\end{document}